\numberwithin{equation}{section}
\newtheorem{theorem}{Theorem}[section]
\newtheorem{proposition}[theorem]{Proposition}
\newtheorem{lemma}[theorem]{Lemma}
\newtheorem{corollary}[theorem]{Corollary}
\theoremstyle{definition}
\newtheorem{definition}[theorem]{Definition}
\newtheorem{remark}[theorem]{Remark}
\newtheorem{problem}[theorem]{Problem}
\newtheorem{example}[theorem]{Example}
\newcommand\Bscr{\mathscr{B}}
\newcommand\Cscr{\mathscr{C}}
\newcommand\Fscr{\mathscr{F}}
\newcommand\Lscr{\mathscr{L}}
\newcommand\Oscr{\mathscr{O}}
\newcommand\C{\mathbb{C}}
\newcommand\CP{\mathbb{CP}}
\renewcommand\H{\mathbb{H}}
\newcommand\N{\mathbb{N}}
\renewcommand\P{\mathbb{P}}
\newcommand\R{\mathbb{R}}
\renewcommand\S{\mathbb{S}}
\newcommand\Z{\mathbb{Z}}
\newcommand\igot{\mathfrak{i}}
\newcommand\jgot{\mathfrak{j}}
\newcommand\kgot{\mathfrak{k}}
\renewcommand\imath{\igot}
\newcommand\hra{\hookrightarrow}
\newcommand\lra{\longrightarrow}
\newcommand\wt{\widetilde}
\newcommand\di{\partial}
\newcommand\dibar{{\overline\partial}}
\newcommand\dist{\mathrm{dist}}
\newcommand\Res{\mathrm{Res}}
\newcommand\Aut{\mathrm{Aut}}
\def\dist{\mathrm{dist}}
\def\ker{\mathrm{ker}}
\begin{document}

\title{Holomorphic Legendrian curves in $\CP^3$ \\ 
and superminimal surfaces in $\S^4$}

\author{Antonio Alarc\'on, Franc Forstneri\v c, and Finnur L\'arusson}

\subjclass[2010]{Primary 53D10. Secondary 32E30,  32H02, 53A10.}

\date{28 October 2019.  Minor changes 7 September 2020 and 7 February 2022}

\keywords{Riemann surface, Legendrian curve, Runge approximation, superminimal surface}

\begin{abstract}   
We obtain a Runge approximation theorem for holomorphic Legendrian curves
and immersions in the complex projective $3$-space $\CP^3$, both from open and compact Riemann surfaces,  
and we prove that the space of Legendrian immersions from an open Riemann surface into $\CP^3$ is path
connected. We also show that holomorphic Legendrian immersions from Riemann surfaces of finite genus 
and at most countably many ends, none of which are point ends, satisfy the Calabi--Yau property. 
Coupled with the Runge approximation theorem, we infer that every open Riemann surface
embeds into $\CP^3$ as a complete holomorphic Legendrian curve.
Under the twistor projection $\pi:\CP^3\to \S^4$ onto the $4$-sphere, immersed holomorphic 
Legendrian curves $M\to \CP^3$ are in bijective correspondence with superminimal 
immersions $M\to\S^4$ of positive spin according to a result of Bryant.  
This gives as corollaries the corresponding results on superminimal surfaces in $\S^4$.
In particular, superminimal immersions into $\S^4$ satisfy the Runge approximation theorem 
and the Calabi--Yau property.
\end{abstract}

\maketitle

%
%
\section{Introduction} 
It is well known that the 3-dimensional complex projective space $\CP^3$ admits a unique complex contact
structure, that is to say, a completely noninvolutive holomorphic hyperplane subbundle $\xi$ 
of the tangent bundle $T\CP^3$ such that any other holomorphic contact bundle on $\CP^3$ is 
contactomorphic to $\xi$ by an automorphism of $\CP^3$ 
(see C.\ LeBrun and S.\ Salamon \cite{LeBrunSalamon1994IM,LeBrun1995IJM}). This contact structure is
determined by the following homogeneous $1$-form on $\C^4$ 
via the standard projection $\C^4\setminus \{0\}\to \CP^3$:
\begin{equation}\label{eq:alpha0}
	\alpha_0=z_0dz_1-z_1dz_0+z_2dz_3-z_3dz_2.
\end{equation}
(See Sect.\ \ref{sec:F}.)  Uniqueness makes this contact structure 
fundamentally interesting. This was amplified in 1982 when R.\ Bryant \cite{Bryant1982JDG}  
discovered that the Penrose {\em twistor projection} $\pi:\CP^3\to \S^4$  
(a fibre bundle projection onto the $4$-sphere whose fibres are projective lines) 
induces a  bijective correspondence between immersed holomorphic Legendrian curves in $\CP^3$ 
and immersed superminimal surfaces of positive spin in $\S^4$. (When speaking of the $4$-sphere,
we always consider it endowed with the spherical metric induced by the Euclidean metric
on the unit sphere $\S^4\subset\R^5$.)
Furthermore, the contact bundle $\xi$ on $\CP^3$ is the orthogonal complement 
of the vertical tangent bundle of $\pi$ in the Fubini-Study metric, and the differential 
$d\pi$ maps $\xi$ isometrically onto $T\S^4$, so $\pi$ maps Legendrian curves
locally isometrically to superminimal surfaces in $\S^4$. The latter form an interesting subclass of 
the class of all minimal surfaces in $\S^4$. 
Bryant proved in \cite[Theorem F]{Bryant1982JDG} that for any pair of meromorphic functions $f,g$ 
on a Riemann surface $M$ with $g$ nonconstant, the map given in homogeneous coordinates by 
\begin{equation}\label{eq:B1}
	\Bscr(f,g) = \big[dg: fdg-\tfrac 1 2 g df: g dg: \tfrac 1 2 df\big]:M\lra \CP^3 
\end{equation}
is a holomorphic Legendrian curve in $\CP^3$.  
Using this formula, he showed that any compact Riemann surface $M$ admits a holomorphic 
embedding into $\CP^3$ as a Legendrian curve (see \cite[Theorem G]{Bryant1982JDG}), 
and he inferred that any such $M$ admits a conformal, generically injective 
immersion $M\to\S^4$ onto a superminimal surface in $\S^4$ (see \cite[Corollary H]{Bryant1982JDG}).

In the present paper we go considerably further by treating not only Legendrian curves in $\CP^3$
and conformal superminimal surfaces in $\S^4$ parameterised by compact Riemann surfaces, 
but also those parameterised by open or by compact bordered Riemann surfaces. 
In particular, we obtain 
the first general existence and approximation results in the 
literature for complete noncompact superminimal surfaces in the $4$-sphere. More about this below.

Let us now describe the contents of the paper. 

We begin by presenting in Sect.\ \ref{sec:F} a unified approach from first principles
to a couple of representation formulas for Legendrian curves in $\CP^3$, 
the one of Bryant \eqref{eq:B1} and another one adapted from the recent papers 
by Alarc\'on, Forstneri\v c and L\'opez \cite{AlarconForstnericLopez2017CM} 
and Forstneri\v c and L\'arusson \cite{ForstnericLarusson2018MZ}; see \eqref{eq:Bfg}, \eqref{eq:Fhg}. 
The relationship between them is given by \eqref{eq:comparison}. As pointed out in Remark \ref{rem:comparison}, 
the optimal choice of a formula to use depends on the particular problem one wants to solve. 
Although each formula has a set of exceptional curves it does not cover, any given Legendrian curve is 
nonexceptional in some homogeneous coordinates on $\CP^3$  (see Proposition \ref{prop:all}). 
By choosing homogeneous coordinates on $\CP^3$ so that the hyperplane at infinity 
intersects our Legendrian curve transversely, which is possible by Bertini's theorem, 
the two meromorphic functions determining the curve have only simple poles.  
This condition means that the curve is immersed near the poles.

In Sect.\ \ref{sec:approx-interpol} we use Bryant's formula \eqref{eq:B1} to
prove the Runge approximation theorem coupled with the Weierstrass interpolation theorem 
for holomorphic Legendrian curves in $\CP^3$, both from compact and open Riemann surfaces 
(see Theorem \ref{th:Runge}), as well as the corresponding result for holomorphic Legendrian immersions 
(see Theorem \ref{th:Runge-for-immersions}). For open Riemann surfaces, we also
have a Runge approximation theorem for Legendrian embeddings into $\CP^3$
(see Corollary \ref{cor:Runge-embeddings}), where by an embedding we mean an injective immersion.

In Sect.\ \ref{sec:connected} we use the second representation formula \eqref{eq:Fhg} 
to prove that the space of all Legendrian immersions $M\to\CP^3$ from an arbitrary 
open Riemann surface is path connected; see Theorem \ref{th:connected}.
On the other hand, the space of Legendrian immersions is not connected if $M$ is compact.  
It will split into components by degree, and perhaps further.

These results imply that every formal Legendrian immersion from an open Riemann surface 
to $\CP^3$ can be deformed to a genuine holomorphic Legendrian immersion, unique up to homotopy
(see Theorem \ref{th:h-principle}). It remains an open problem whether the inclusion of the space
of holomorphic Legendrian immersions $M\to \CP^3$ into the space of formal Legendrian immersions
satisfies the full parametric h-principle. For immersed Legendrian curves in $\C^{2n+1}$ 
with its standard contact structure the parametric h-principle was proved in \cite{ForstnericLarusson2018MZ};
however, the technical problems that arise for Legendrian curves in projective spaces are considerable.

In Sect.\ \ref{sec:CY} we introduce an axiomatic approach to the {\em Calabi--Yau problem}
which unifies recent results in this direction in various geometries. 
The motivation behind results of this type is the {\em Calabi--Yau problem for minimal hypersurfaces},
asking whether there exist complete bounded minimal hypersurfaces in $\R^n$ for $n\ge 3$. This problem
originates in Calabi's conjecture from 1965 that such hypersurfaces do not exist 
(see \cite[p.\ 170]{Calabi1965Conjecture}). Nothing seems known about this question
concerning hypersurfaces in $\R^n$ for $n\ge 4$. However, several constructions of complete bounded 
$2$-dimensional minimal surfaces in $\R^n$ for any $n\ge 3$ have been developed, 
starting with the seminal works of L.\ Jorge and F.\ Xavier \cite{JorgeXavier1980AM} in 1980 
and N.\ Nadirashvili \cite{Nadirashvili1996IM} in 1996. (Note that we are not talking of hypersurfaces,
unless $n=3$.) 
Subsequent developments were inspired by S.-T.\ Yau's {\em 2000 Millennium Lecture} \cite{Yau2000AMS} 
where he revisited Calabi's conjectures and proposed several questions concerning topology, complex structure, 
and boundary behaviour of complete bounded minimal surfaces in $\R^3$. A recent survey of this topic
can be found in Alarc\'on and Forstneri\v c \cite[Sect.\ 5.3]{AlarconForstneric2019JAMS}; see also the paper 
\cite{AlarconForstneric2019RMI} where the Calabi--Yau theorem was established for immersed minimal
surfaces in $\R^n$, $n\ge 3$, from any open Riemann surface of finite genus and at most countably many
ends, none of which are point ends. 

Recently, the Calabi--Yau phenomenon has been discovered in other geometries, 
and it is reasonable to expect that more examples will follow. This motivated us to formulate an axiomatic 
approach by introducing the {\em Calabi--Yau property} which a class of immersions into a given 
Riemannian manifold $N$ (or a class of manifolds) may or may not have; see Definition \ref{def:CY} 
and Theorem \ref{th:abstractCY}. This property means that one can enlarge the intrinsic diameter of 
an immersed manifold as much as desired by $\Cscr^0$ small perturbations of the immersion
in the given class. Combining the Calabi--Yau property with Runge's approximation property for immersions 
of the given class into the manifold $N$ (see Definition \ref{def:Runge}) gives complete immersions of 
this class from all open admissible manifolds into $N$ (see Theorem \ref{th:complete2}). 

As a particular case of interest, we discuss Legendrian immersions. It was proved in 
\cite{AlarconForstneric2019IMRN} that holomorphic Legendrian immersions from bordered Riemann surfaces into
any complex contact manifold with an arbitrary Riemannian metric enjoy the Calabi--Yau property. We show that 
one can at the same time interpolate the given map at finitely many points 
(see Corollary \ref{cor:CYLegendrian}). Coupled with the Runge approximation 
theorem for Legendrian embeddings of open Riemann surfaces into $\CP^3$ 
(see Corollary \ref{cor:Runge-embeddings}) and Bryant's Legendrian embedding theorem for compact
Riemann surfaces \cite[Theorem G]{Bryant1982JDG}, it follows that every Riemann surface embeds 
into $\CP^3$ as a complete holomorphic Legendrian curve (see Corollary \ref{cor:completeLeg}).

In Sect.\ \ref{sec:S4} we apply our results to the study of superminimal surfaces in the $4$-sphere, $\S^4$,
endowed with the spherical metric. It follows in particular that the Runge approximation theorem and the 
Weierstrass interpolation theorem hold for conformal superminimal immersions of Riemann surfaces 
(both open and closed) into $\S^4$,  and every open Riemann surface is the conformal structure of 
a complete conformally immersed superminimal surface in $\S^4$ (see Corollaries \ref{cor:RungeS4} and 
\ref{cor:WeierstrassS4}). Furthermore, 
any smooth conformal superminimal immersion $M \to \S^4$ from a compact bordered Riemann surface 
can be approximated as closely as desired uniformly on $M$  by a continuous map 
$M\to \S^4$ whose restriction to the interior of $M$ is a complete conformal superminimal surface 
with Jordan boundary (see Theorem \ref{th:CYS4}). The analogous result for minimal surfaces
in $\R^n$, $n\ge 3$, with the Euclidean metric was proved in 
\cite{AlarconDrinovecForstnericLopez2015PLMS}. Finally, for every open Riemann surface, $M$, 
the spaces of conformal superminimal immersions $M\to\S^4$ of positive or negative spin are path connected; 
see Corollary \ref{cor:connectedS4}.

Results of this paper concerning holomorphic Legendrian curves in $\CP^3$ 
can be generalised to higher dimensional projective spaces $\CP^{2n+1}$ with the unique
holomorphic contact structure determined by the following homogeneous 
holomorphic $1$-form on $\C^{2n+2}$:
\[
	\alpha_0 = \sum_{j=0}^n z_{2j}dz_{2j+1} -  z_{2j+1}dz_{2j}.
\]
Since $\CP^{2n+1}$ is the twistor space of the quaternionic projective space $\H\P^n$ 
(see \cite[p.\ 113]{LeBrunSalamon1994IM}), this gives similar applications to superminimal
surfaces in $\H\P^n$ for $n>1$. 
We shall not give the details of this generalisation because this would considerably enlarge the paper
without providing any substantially new ideas or techniques. 
After the completion of this paper, the approach developed here was used by the second named author
in \cite{Forstneric2020JGEA} to establish the Calabi--Yau property of superminimal surfaces of appropriate spin 
in any self-dual or anti-self-dual Einstein four-manifold, the four-sphere being a special case.

%
%
%
%
\section{Representation formulas for Legendrian curves in $\CP^3$}\label{sec:F}
Let $\alpha_0$ be the homogeneous $1$-form on $\C^4$ defined by \eqref{eq:alpha0}.
Its differential is the standard complex symplectic form on $\C^4$.
At each point $z=(z_0,z_1,z_2,z_3)\in \C^4\setminus \{0\}$, $\ker \, \alpha_0(z)$ 
is a complex hyperplane in $T_z\C^4$ containing the radial vector $\sum_{i=0}^3 z_i \frac{\di}{\di z_i}$. 
Let $\pi:\C^4\setminus \{0\}\to \CP^3$ be the standard projection and $[z_0:z_1:z_2:z_3]$ be the
homogeneous coordinates on $\CP^3$. Since $\alpha_0$ is homogeneous, 
there is a unique holomorphic hyperplane subbundle $\xi\subset T\CP^3$ defined by the condition
\[
	\left\{v\in T_z\C^4: d\pi_z(v)\in \xi_{\pi(z)} \right\}= \ker\,\alpha_0(z),\quad z\in \C^4\setminus \{0\}. 
\]
It turns out that $\xi$ is a holomorphic contact bundle on $T\CP^3$, and the essentially unique one (see \cite{LeBrunSalamon1994IM} or \cite[Proposition 2.3]{LeBrun1995IJM}). 
The following lemma shows that the restriction of $\xi$ to any affine chart $\C^3\subset \CP^3$ 
is linearly contactomorphic to the standard contact structure on $\C^3$.

%
%
\begin{lemma}\label{lem:standardform}
For every projective hyperplane $\CP^2\cong H\subset \CP^3$ there are linear coordinates
$(z'_1,z'_2,z'_3)$ on $\C^3=\CP^3\setminus H$ in which $\xi$ is defined by the contact form
\begin{equation}\label{eq:alpha}
	\alpha=dz'_1 + z'_2 dz'_3 - z'_3dz'_2.
\end{equation}
\end{lemma}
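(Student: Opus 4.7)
The idea is to reduce to the standard hyperplane $H_0=\{z_0=0\}$ using the symmetry of $(\CP^3,\xi)$ under the complex symplectic group $\mathrm{Sp}(4,\C)$ of the form $d\alpha_0$.

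First I would verify the statement in the reference case. On the chart $\CP^3\setminus H_0\cong\C^3$ with $w_i=z_i/z_0$ ($i=1,2,3$), substituting $z_i=z_0 w_i$ and $dz_i=z_0\,dw_i+w_i\,dz_0$ into \eqref{eq:alpha0} and collecting terms yields
\[
\alpha_0=z_0^2\bigl(dw_1+w_2\,dw_3-w_3\,dw_2\bigr),
\]
so $\xi|_{\CP^3\setminus H_0}=\ker(dw_1+w_2\,dw_3-w_3\,dw_2)$, which already has the required form.

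Next, writing $\alpha_0=z^TJ\,dz$ for the antisymmetric matrix $J$ associated to $d\alpha_0$ (up to a harmless factor), the defining relation $A^TJA=J$ of $\mathrm{Sp}(4,\C)$ gives $A^*\alpha_0=\alpha_0$, so the induced $\mathrm{PSp}(4,\C)$-action on $\CP^3$ is by contactomorphisms of $(\CP^3,\xi)$. A hyperplane in $\CP^3$ corresponds to a $3$-dimensional subspace $V\subset\C^4$, whose $d\alpha_0$-orthogonal $V^\perp$ is a line, and $V=(V^\perp)^\perp$. Since $\mathrm{Sp}(4,\C)$ acts transitively on $\C^4\setminus\{0\}$ and preserves symplectic orthogonals, it acts transitively on projective hyperplanes of $\CP^3$.

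Given an arbitrary hyperplane $H$, I would pick $A\in\mathrm{Sp}(4,\C)$ with $A(H_0)=H$ and set $z'_i:=w_i\circ A^{-1}$ for $i=1,2,3$. Since $A^{-1}$ restricts to a projective linear, hence affine, isomorphism $\CP^3\setminus H\to\C^3=\CP^3\setminus H_0$, the triple $(z'_1,z'_2,z'_3)$ is a system of linear coordinates on $\C^3=\CP^3\setminus H$. As $A^{-1}$ is a contactomorphism, functoriality of pullbacks gives
\[
(A^{-1})^*\bigl(dw_1+w_2\,dw_3-w_3\,dw_2\bigr)=dz'_1+z'_2\,dz'_3-z'_3\,dz'_2
\]
as a defining form of $\xi$ on $\CP^3\setminus H$. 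The only nonroutine point is the transitivity of $\mathrm{Sp}(4,\C)$ on hyperplanes, and this is the main obstacle; everything else is direct coordinate computation and pullback functoriality.
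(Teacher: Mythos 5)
Your proof is correct, but it takes a genuinely different route from the paper's. The paper first invokes (without elaboration) a symmetry of $\alpha_0$ to reduce to hyperplanes of the form $z_0=a_1z_1+a_2z_2+a_3z_3$, then restricts $\alpha_0$ to the affine slice $\{z_0=1+a_1z_1+a_2z_2+a_3z_3\}\subset\C^4$ and exhibits an explicit linear change of coordinates $z'_1=z_1$, $z'_2=z_2-a_3z_1$, $z'_3=z_3+a_2z_1$ that puts the resulting $1$-form into normal form. You instead verify the normal form directly in the chart $\CP^3\setminus\{z_0=0\}$ (via $\alpha_0=z_0^2(dw_1+w_2\,dw_3-w_3\,dw_2)$, which is correct) and then move an arbitrary hyperplane to $\{z_0=0\}$ by an element of $\mathrm{Sp}(4,\C)$; the key facts — that $A^TJA=J$ gives $A^*\alpha_0=\alpha_0$ so $\mathrm{PSp}(4,\C)$ acts by contactomorphisms, that $\mathrm{Sp}(4,\C)$ is transitive on $\C^4\setminus\{0\}$, and that hyperplanes correspond via $V\mapsto V^\perp$ to lines on which that action is transitive — are all right and correctly assembled, and the pullback coordinates $z'_i=w_i\circ A^{-1}$ are indeed affine coordinates on $\CP^3\setminus H$ since $A^{-1}$ respects the two hyperplanes at infinity. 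Your argument is more conceptual, makes explicit the symmetry the paper only gestures at, and would transfer verbatim to $\CP^{2n+1}$ and $\mathrm{Sp}(2n+2,\C)$; the paper's approach is more hands-on and yields explicit formulas for the coordinate change, which can be convenient in computations downstream.
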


Note that every linear automorphism of $\C^3$ extends to a unique projective automorphism
of $\CP^3$. Hence, in the context of the lemma there exists $\phi\in \Aut(\CP^3)$ such that
$\phi(H)=H$ and $\phi_*(\xi)=\ker\,\alpha$ on $\C^3=\CP^3\setminus H$.

\begin{proof}
Due to symmetries of $\alpha_0$ as defined in \eqref{eq:alpha0}, it suffices to consider hyperplanes $H\subset \CP^3$ 
of the form $z_0 = a_1z_1+a_2z_2 +  a_3z_3$ for some $a_1,a_2,a_3\in\C$. 
The affine chart $\CP^3\setminus H=\C^3$ is then determined by the affine hyperplane 
\[
	\Lambda = \{z_0 = 1+a_1z_1+a_2z_2 +  a_3z_3\}\subset \C^4.
\]
Note that $(z_1,z_2,z_3)$ are affine coordinates on $\Lambda$, and the restriction
of $\alpha_0$ to it is 
\[
	\alpha=(1+a_2z_2 +  a_3z_3)dz_1-(z_3+a_2z_1)dz_2+(z_2-a_3z_1)dz_3.
\]
We introduce new linear coordinates on $\C^3$ by 
\[
	z'_1=z_1,\quad z'_2=z_2-a_3z_1,\quad z'_3=z_3 + a_2z_1.
\]
Then, 
\begin{eqnarray*}
	(1+a_2z_2 +  a_3z_3)dz_1 &=& (1+a_2z'_2+a_3z'_3)dz_1, \\
	(z_3+a_2z_1)dz_2 &=&  z'_3 (dz'_2+a_3 dz_1) = z'_3 dz'_2+a_3z'_3 dz_1, \\
	(z_2-a_3z_1)dz_3  &=& z'_2(dz'_3-a_2dz_1) = z'_2dz'_3 - a_2z'_2dz_1,
\end{eqnarray*}
and hence $\alpha$ is given in these coordinates by \eqref{eq:alpha}.
\end{proof}

Lemma \ref{lem:standardform} shows that for any hyperplane $H\subset \CP^3$, homogeneous 
coordinates on $\CP^3$ can be chosen such that $H=\{z_0=0\}$ and the contact structure $\xi$
is given on $\C^3= \CP^3\setminus H$ as the kernel of the holomorphic contact form
\begin{equation}\label{eq:alphast}
	\alpha=dz_1 + z_2dz_3-z_3dz_2,\quad\  \alpha\wedge d\alpha = 2dz_1\wedge dz_2\wedge dz_3\ne 0.
\end{equation}
Globally on $\CP^3$, $\alpha$ is a meromorphic $1$-form with a second order pole 
along the hyperplane $H=\{z_0=0\}$. It can be viewed as a nowhere vanishing 
holomorphic contact $1$-form on $\CP^3$ with values in the normal line bundle $L=T\CP^3/\xi$ 
of the contact structure. (See \cite[Sect.\ 2]{LeBrunSalamon1994IM} for the precise explanation.) 
Furthermore, $\omega=\alpha\wedge d\alpha$ is a holomorphic $3$-form on $\CP^3$ 
with values in the line bundle $L^2$, hence an element of $H^0(\CP^3,K \otimes L^2)$
where $K=\Lambda^3(T^*\CP^3)$ is the canonical bundle of $\CP^3$.  
Being nowhere vanishing, $\omega$ defines a holomorphic trivialisation of 
$K\otimes L^2$, so we infer that $L\cong K^{-1/2}=\Oscr_{\CP^3}(2)$. 
In other words, the dual bundle $L^*=L^{-1}$ is the square of the universal bundle on $\CP^3$. 

We also consider the contact form on $\C^3$ given by
\begin{equation}\label{eq:betast}
	\beta=dz_1+z_2\, dz_3,
\end{equation}
with $\beta\wedge d\beta = dz_1\wedge dz_2\wedge dz_3$. The map $\psi:\C^3\to\C^3$ defined by
\begin{equation}\label{eq:psi}
	\psi(z_1,z_2,z_3) = \left(z_1+\frac{z_2z_3}{2}, z_3, -\frac{z_2}{2}\right)
\end{equation}
is a polynomial automorphism of $\C^3$, and a simple calculation shows that $\psi^*\alpha =\beta$.
It follows that $\psi$ maps $\beta$-Legendrian curves to $\alpha$-Legendrian curves.
Clearly, we can represent $\beta$-Legendrian curves in either of the following two forms:
\begin{equation}\label{eq:case1}
	z_1=f, \quad z_2=-\frac{df}{dg},\quad z_3=g,
\end{equation}
\begin{equation}\label{eq:case2}
	z_1=-\int hdg,\quad z_2=h,\quad z_3=g,
\end{equation}
where $f,g,h$ are meromorphic functions on a given Riemann surface $M$. 
(The part of the curve contained in $\C^3$ is the image of the complement $M\setminus P$
of the set $P$ of poles of the respective pair of functions $(f,g)$ or $(h,g)$.)

In the first case \eqref{eq:case1}, the pair of functions $(f,g)$ is arbitrary subject only to the condition 
that $g$ is nonconstant. The exceptional family of Legendrian lines with $z_1=const.$, $z_3=const.$
cannot be represented in this way. 

In the second case \eqref{eq:case2}, the pair $(h,g)$ must be such that $hdg$ is an exact 
meromorphic $1$-form, which therefore has a meromorphic primitive $f=-\int hdg$ determined up 
to an additive constant. We discuss this condition in Proposition \ref{prop:exact}.
Conversely, assuming that $g$ is nonconstant, we can express $h$ in terms of $f$ by $h=-df/dg$. 

Applying the automorphism $\psi\in\Aut(\C^3)$, given by \eqref{eq:psi}, to $\beta$-Legendrian curves
\eqref{eq:case1}, \eqref{eq:case2} yields the following formulas for $\alpha$-Legendrian curves in $\CP^3$:
\begin{eqnarray}\label{eq:Bfg}
	\Bscr(f,g) &=& \left[1:f-\frac{1}{2}g\frac{df}{dg}: g: \frac{1}{2} \frac{df}{dg} \right]
	= \big[dg: fdg-\tfrac 1 2 g df: g dg: \tfrac 1 2 df\big], \\
	\label{eq:Fhg}
	\Fscr(h,g) &=& \left[1: \frac{hg}{2} -\int h dg: g: -\frac{h}{2}\right]
	=\left[1: \int g dh - \frac{hg}{2} : g: -\frac{h}{2}\right].
\end{eqnarray}
Both formulas depend on the choice of homogeneous coordinates and are related by
\begin{equation}\label{eq:comparison}
	\Bscr(f,g)=\Fscr(h,g),\ \ \text{where}\ \ f=-\int hdg\ \ \text{and}\ \ h=-df/dg.
\end{equation}
The formula \eqref{eq:Bfg} was used by Bryant \cite{Bryant1982JDG} to prove that every
compact Riemann surface embeds in $\CP^3$ as a holomorphic Legendrian curve. 
The second formula \eqref{eq:Fhg} has been exploited in the study of Legendrian curves in $\C^3$ in  
the recent work \cite{AlarconForstnericLopez2017CM}. 

The family of exceptional $\beta$-Legendrian lines $z_1=a=const.$, $z_2=2t\in\C$,
$z_3=b=const.$ is mapped by the automorphism $\psi$ given in \eqref{eq:psi} to the family of 
exceptional $\alpha$-Legendrian lines 
\begin{equation}\label{eq:exceptional}
	\left[1: a+bt: b: -t \right] \quad \text{with}\ t\in\CP^1\ \text{and}\ a,b\in\C,
\end{equation}
which are not of the form $\Bscr(f,g)$.
On the other hand, every Legendrian curve intersecting this affine chart equals
$\Fscr(h,g)$ for a unique pair of meromorphic functions $(h,g)$ and a choice of an additive 
constant determining the value of the integral $\int hdg$ at an initial point $p_0\in M$.

We now show that every nonconstant Legendrian curve in $\CP^3$ is of the form 
$\Bscr(f,g)$ and $\Fscr(h,g)$ in some homogeneous coordinate system on $\CP^3$.

%
%
\begin{proposition}\label{prop:all}
Let $F:M\to \CP^3$ be a nonconstant holomorphic Legendrian curve
from an open or compact Riemann surface $M$.
\begin{enumerate}[\rm (a)]
\item
There are homogeneous coordinates on $\CP^3$ such that $F=\Bscr(f,g)$
(see \eqref{eq:Bfg}), where $f$ and $g$ are meromorphic functions on $M$ with only simple poles.
\item
There are homogeneous coordinates on $\CP^3$ such that $F=\Fscr(h,g)$ (see \eqref{eq:Fhg}),
where $h$ and $g$ are meromorphic functions on $M$ with only simple poles.
\end{enumerate}
Furthermore, every Legendrian curve $M\to\CP^3$ given by \eqref{eq:Bfg} or \eqref{eq:Fhg},
with the functions $f,g,h$ having only simple poles, is an immersion on a neighbourhood 
of the union of the sets of poles of $f$ and $g$ (for \eqref{eq:Bfg}), or $h$ and $g$ (for \eqref{eq:Fhg}). 
\end{proposition}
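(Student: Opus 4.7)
My plan is to prove parts (a) and (b) in parallel by choosing homogeneous coordinates on $\CP^3$ well adapted to $F$, and to handle the immersion claim by a local computation near the poles.

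For part (a), I would first apply Bertini's theorem in the compact case, or a Sard-type genericity argument on the dual projective space $(\CP^3)^\vee$ in the open case, to choose a hyperplane $H \subset \CP^3$ that meets $F$ transversely and satisfies $F(M) \not\subset H$. By Lemma \ref{lem:standardform}, I can then pick homogeneous coordinates $(z_0,z_1,z_2,z_3)$ with $H=\{z_0=0\}$ so that the contact form on $\C^3 = \CP^3 \setminus H$ is $\alpha = dz_1 + z_2dz_3 - z_3dz_2$. Within the residual freedom, coming from the subgroup of $\Aut(\CP^3,\xi) \cong PSp_4(\C)$ fixing $H$, I would further arrange that (i) every point of $F \cap H$ lies in $\{z_2=0\} \cup \{z_3=0\}$, and (ii) $F$ is not one of the exceptional Legendrian lines in \eqref{eq:exceptional}. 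With these coordinates fixed, define $g = z_2 = F_2/F_0$ and $f = z_1 + z_2 z_3 = (F_0 F_1 + F_2 F_3)/F_0^2$ as meromorphic functions on $M$. Since $F$ is non-exceptional, the argument preceding \eqref{eq:Bfg} above shows $F = \Bscr(f,g)$. Transversality of $F \cap H$ gives simple poles of $g$; condition (i) forces the numerator $F_0 F_1 + F_2 F_3$ to vanish to order at least one at each pole of $F_0^2$ (since $F_0$ does and either $F_2$ or $F_3$ does), so $f$ has only simple poles as well. Part (b) is established by the same template using the formula $\Fscr(h,g)$ from \eqref{eq:Fhg} and the analogous vanishing condition derived via \eqref{eq:comparison}.

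The principal technical obstacle is the normalisation (i): the discrete set $F \cap H$ must be covered by the two coordinate hyperplanes $\{z_2=0\}$ and $\{z_3=0\}$. This calls for genuine use of the $10$-dimensional contact automorphism group $PSp_4(\C)$; a dimension count for the stabiliser of $H$ acting on pairs of coordinate hyperplanes, combined with the Darboux-pair structure of \eqref{eq:alphast}, should provide the required flexibility, possibly supplemented by an exhaustion argument in the open case where $F \cap H$ need not be finite.

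For the ``furthermore'' statement, the argument is a local computation. Take $F = \Bscr(f,g)$ with $f,g$ having only simple poles, and let $p$ be a pole of $f$ or $g$. Introduce a local parameter $t$ on $M$ with $t(p)=0$ and substitute the Laurent expansions of $f$ and $g$ near $p$ into the homogeneous expression \eqref{eq:Bfg}. After multiplying all four components by the least common power of $t$ needed to clear the common pole, one obtains four holomorphic functions of $t$ which are not simultaneously zero at $t=0$ and whose first-order behaviour is not merely radial; therefore, in the standard affine chart on $\CP^3$ centred at $F(p)$, the map $F$ has nonzero differential at $p$, giving the required immersion property on a neighbourhood of $p$. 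The identical argument using \eqref{eq:Fhg} handles the $\Fscr(h,g)$ case.
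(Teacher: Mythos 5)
Your proposal diverges from the paper's proof at the crucial step, and the divergence introduces a gap that you yourself flag but do not close.

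The paper's route is cleaner and does not need your normalisation (i). After Bertini, write $F=[1:F_1:F_2:F_3]$ near a point $p\in F^{-1}(H)$ and let $k$ be the maximal pole order of $F_1,F_2,F_3$ at $p$; transversality of $F$ to $H$ at $p$ is \emph{equivalent} to $k=1$. Since $g=F_2$ and $h=-2F_3$ are literally two of the three affine coordinates, $k=1$ immediately gives simple poles of both $h$ and $g$: this proves part (b) outright, with no extra condition. Your statement that part (b) ``is established by the same template'' via an ``analogous vanishing condition'' is therefore not just unnecessary but misleading; transversality alone does the job for $\Fscr(h,g)$.

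For part (a), your observation is genuinely sharper than the paper's phrasing: from $k=1$ one only gets directly that $g$ and $df/dg$ have simple poles, and $f = F_1 + F_2F_3$ could a priori have a double pole, which happens exactly when $F(p)\in H\setminus\{z_2z_3=0\}$, i.e.\ when $F(p)$ misses the quadric $\{z_0z_1+z_2z_3=0\}$. Your condition (i) is precisely the right reformulation. But you explicitly leave it as ``the principal technical obstacle'' and only gesture at a dimension count. This is the gap. Note moreover that in the open case $F^{-1}(H)$ is generally an \emph{infinite} closed discrete set, so $F(F^{-1}(H))$ can be an infinite subset of $H\cong\CP^2$ with limit points; moving all of it into the fixed codimension-one set $\{z_2=0\}\cup\{z_3=0\}\subset H$ by a \emph{single} element of the stabiliser of $H$ in $PSp_4(\C)$ is not something a dimension count alone can deliver, and an ``exhaustion argument'' cannot produce one global automorphism. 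You would need either to prove (i) or, better, to follow the paper and derive the simple poles from $k=1$ (for $\Fscr$) and then address the $\Bscr$ case separately (e.g.\ by refining the choice of $H$ within its dense open set of admissible hyperplanes, not by composing with contactomorphisms afterwards).

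The ``furthermore'' part of your proposal is not an argument. ``Whose first-order behaviour is not merely radial'' does not follow from the setup, and in fact the situation is delicate: as the paper's own order analysis before Lemma \ref{lem:gimmersion} shows, if $f$ has a simple pole ($a=-1$) at a point where $g$ is critical ($b=2$ in those local normal forms), the order list of the four homogeneous components is $[1:0:3:-2]$, where the two smallest orders differ by $2$, so $\Bscr(f,g)$ is \emph{not} regular there. So either the local computation must be carried out carefully case by case, or the argument must be routed through transversality of $F$ to $H$ at $p$ (which is the paper's intent: if $k=1$ then after clearing poles the coordinate $z_0$ has nonvanishing derivative, so $dF_p\neq 0$); in either case, ``substitute the Laurent expansions'' without the actual bookkeeping is not a proof.
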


\begin{proof} 
Let $F:M\to\CP^3$ be a nonconstant holomorphic Legendrian curve. In view of 
E.\ Bertini's theorem (see e.g.\ \cite[p.\ 150]{GoreskyMacPherson1988} or \cite{Kleiman1974}
and note that this is essentially an application of the transversality theorem), 
$F$ intersects most complex hyperplanes $H\subset\CP^3$ transversely. 
Fix such $H$ and choose homogeneous coordinates $[z_0:z_1:z_2:z_3]$ on $\CP^3$ with $H=\{z_0=0\}$ 
and so that the contact form on $\CP^3\setminus H=\C^3$ is given by \eqref{eq:alphast}.
The preimage $F^{-1}(H)=\{p\in M:F(p)\in H\}$ is then a closed discrete subset of $M$. 
Hence, we can represent $F$ in either form \eqref{eq:Bfg} or \eqref{eq:Fhg}, the only exceptions 
being the family of projective lines \eqref{eq:exceptional} which cannot be represented by 
Bryant's formula \eqref{eq:Bfg}. We shall deal with this issue later.

Consider a point $p\in F^{-1}(H)$. Choose a local holomorphic coordinate $\zeta$ on $M$ 
with $\zeta(p)=0$. Write $F=[1:F_1:F_2:F_3]$ and let $k\in \N$ be the maximal order of poles at $p$ 
of the components $F_1$, $F_2$ and $F_3$. Multiplying by $\zeta^k$ we obtain 
\[
	F(\zeta)=\bigl[\zeta^k: \zeta^k F_1(\zeta):\zeta^k F_2(\zeta):\zeta^k F_3(\zeta)\bigr],
\]
where the functions $\zeta^k F_j(\zeta)$ for $j\in \{1,2,3\}$ are regular at $\zeta=0$ and 
at least one of them is nonvanishing at $\zeta=0$. Looking at the map $F$ in the corresponding 
affine chart $\{z_j=1\}$, we see that $F$ is transverse to $H$ at the point $p$ if and only if 
the derivative $dz_0/d\zeta$ is nonvanishing at $\zeta=0$, which holds if and only if $k=1$. 
Inspection of the formulas for $\Bscr(f,g)$ and $\Fscr(h,g)$ then shows that the functions 
$f,g$ or $h,g$ have at most simple poles at $p$.
Conversely, the above argument shows that the intersection of $F$ with $H$ is transverse 
at any simple pole of the functions $f$ and $g$, or $h$ and $g$. In particular, $F$ is an immersion 
near such points.

%
%
It remains to show that the exceptional lines \eqref{eq:exceptional} become 
nonexceptional in another coordinate system. Consider the following coordinates on $\CP^3$: 
\[
	z'_0=z_0,\ \ \ z'_1=z_1,\ \ \ z'_2=-z_3,\ \ \ z'_3=z_2.
\]
We have not changed $H=\{z_0=0\}$, so we are still in the same affine chart.
In these coordinates, the form $\alpha_0$ \eqref{eq:alpha0} restricted to the affine chart $\{z_0=1\}$ equals
\[
	\alpha= dz'_1 + z'_2 dz'_3 - z'_3 dz'_2,
\]
and the exceptional family of lines is given in the new coordinates by 
\[
	\left[1: a+bt: t:b  \right] \quad \text{for}\ t\in\CP^1\ \text{and}\ a,b\in\C.
\]
This curve equals $\Bscr(f,g)$ with $f(t)=a+2bt$ and $g(t)=t$.
This shows that every nonconstant Legendrian curve in $\CP^3$ is of the form $\Bscr(f,g)$ in 
some homogeneous coordinate system. 
\end{proof}

There are Legendrian immersions \eqref{eq:Bfg}, \eqref{eq:Fhg} 
given by functions $f,h,g$ with higher order poles. However, this means that the hyperplane $H$ 
determining the affine chart was not well chosen, and a small deformation of it yields a
representation by functions with simple poles. 

The following is an immediate corollary to Proposition \ref{prop:all}.

%
%
\begin{corollary}\label{cor:immersions}
Let $F=\Fscr(h,g):M\to\CP^3$ be a holomorphic Legendrian curve of the form
\eqref{eq:Fhg} with $g,h$ having only simple poles. Then, $F$ is an immersion if and only if 
$(h,g):M\setminus P\to\C^2$ is an immersion, where $P=P(h)\cup P(g)$ is the union of polar loci of $h$ and $g$.
\end{corollary}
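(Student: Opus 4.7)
The plan is to reduce the statement to a direct computation of the differential of $F$ on the affine chart where $F$ takes values away from $P$.

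First, by the last assertion of Proposition \ref{prop:all}, since $h$ and $g$ have only simple poles, $F$ is already an immersion on a neighbourhood of $P$. Hence $F$ is an immersion on all of $M$ if and only if $F\big|_{M\setminus P}$ is an immersion, and we may restrict attention to $M\setminus P$.

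On $M\setminus P$ the curve lies entirely in the affine chart $\{z_0\ne 0\}\cong\C^3$ of the coordinates \eqref{eq:alphast}, and in that chart $F$ is given by the map $\varphi:M\setminus P\to \C^3$,
\[
    \varphi \;=\; \left(\tfrac{hg}{2} -\int h dg,\ g,\ -\tfrac{h}{2}\right).
\]
A direct differentiation (using $d(\int h\,dg)=h\,dg$) yields
\[
    d\varphi \;=\; \left(\tfrac{1}{2}(g\,dh - h\,dg),\ dg,\ -\tfrac{1}{2}dh\right).
\]
Since $M$ has complex dimension one, $\varphi$ fails to be an immersion at a point $p\in M\setminus P$ precisely when $d\varphi_p=0$. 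The next step is to read off from this formula that $d\varphi_p=0$ if and only if $dh_p=0$ and $dg_p=0$. The second and third components force $dg_p=0$ and $dh_p=0$; conversely, when both of these vanish the first component vanishes automatically.

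Now observe that the map $(h,g):M\setminus P\to\C^2$ has differential $(dh,dg)$, so it is an immersion at $p$ if and only if $(dh_p,dg_p)\ne(0,0)$, which is exactly the condition $d\varphi_p\ne 0$. Combining this with the reduction at $P$ supplied by Proposition \ref{prop:all} gives the desired equivalence. There is no real obstacle here beyond verifying the differential formula; the content is that the two nontrivial components of $d\varphi$ are literally $dg$ and $-\tfrac12 dh$, so the immersion conditions for $\varphi$ and for $(h,g)$ coincide pointwise on $M\setminus P$.
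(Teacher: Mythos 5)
Your proof is correct and follows essentially the same route as the paper: both begin by invoking the last assertion of Proposition \ref{prop:all} to handle a neighbourhood of the polar set $P$, and then compare the immersion condition for $F$ on $M\setminus P$ with that of $(h,g)$. The only (minor) difference is that the paper applies the automorphism $\psi$ of $\C^3$ from \eqref{eq:psi} to reduce to the $\beta$-Legendrian form \eqref{eq:case2}, where $h$ and $g$ appear literally as two of the three coordinates, whereas you differentiate the $\alpha$-Legendrian formula \eqref{eq:Fhg} directly and read off the same conclusion — both boil down to the same observation about the differential.
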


\begin{proof}
By Proposition \ref{prop:all}, $F$ is an immersion if and only if its restriction $M\setminus P\to\C^3$ 
is an immersion. This restriction is equivalent to the $\beta$-Legendrian curve \eqref{eq:case2}
under the automorphism $\psi\in\Aut(\C^3)$ given by \eqref{eq:psi}. Obviously, the map 
\eqref{eq:case2} is an immersion if and only if $(h,g):M\setminus P\to\C^2$ is an immersion.
\end{proof}

The precise conditions for a Legendrian map $F=\Bscr(f,g)$ to be an immersion are
 more complicated.  By Lemma \ref{lem:gimmersion}, if $g$ is an immersion, then 
$\Bscr(f,g)$ is an immersion. See the discussion preceding Theorem \ref{th:Runge-for-immersions} 
for more information.

Let us look more closely at the formula \eqref{eq:Fhg}. The meromorphic $1$-form $hdg$ on $M$ is exact
if and only if $\int_C hdg=0$ for every closed curve $C$ in $M$ which does not contain any poles of $hdg$. 
There are two types of curves to consider:
those in a homology basis of $M$ (they can be chosen in the complement of the set of poles of 
of $hdg$), and small loops around the poles of $hdg$. The integral
of $hdg$ around a pole $a$  equals $2\pi \imath \, \Res_a (hdg)$. Let us record this observation.
  
%
%
\begin{proposition}\label{prop:exact}
A pair of meromorphic functions $(h,g)$ on a Riemann surface $M$ determines a Legendrian
immersion $F=\Fscr(h,g):M\to\CP^3$ \eqref{eq:Fhg} if and only if the following two conditions hold:
\begin{enumerate}[\rm (a)]
\item $\int_C hdg=0$ for every closed curve in a basis of the homology group $H_1(M,\Z)$, and 
\item $\Res_a (hdg)=0$ holds at every pole of $hdg$.
\end{enumerate}
If $a$ is a simple pole of $g$ or $h$, then condition {\rm (b)} is equivalent to
\begin{equation}\label{eq:Res}
	c_{-1}(h,a)c_1(g,a) - c_{-1}(g,a) c_{1}(h,a)=\Res_a (hdg)=0,
\end{equation}
where $c_k(h,a)$ denotes the coefficient of the term $(z-a)^k$ in a Laurent series representation
of $h$ at $a$ (so $c_{-1}(h,a)=\Res_a h$).
\end{proposition}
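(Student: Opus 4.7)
The plan is to reduce the first part of the statement to the standard criterion for a meromorphic $1$-form on a Riemann surface to be exact, and to obtain the residue identity by a short Laurent expansion at $a$.

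By construction, the formula \eqref{eq:Fhg} uses the primitive $-\int hdg$ as its second homogeneous coordinate. Hence $\Fscr(h,g)$ defines a single-valued holomorphic map $M\to\CP^3$ if and only if the meromorphic $1$-form $hdg$ admits a meromorphic primitive on $M$, i.e.\ is exact; in that case the resulting map is automatically Legendrian, being the image under $\psi \in \Aut(\C^3)$ (see \eqref{eq:psi}) of the $\beta$-Legendrian curve \eqref{eq:case2}. The task thus reduces to characterising exactness of $hdg$.

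For this, let $P \subset M$ be the polar set of $hdg$. On $M\setminus P$ the form $hdg$ is holomorphic, so it is exact there precisely when its periods over $H_1(M\setminus P,\Z)$ vanish, and a holomorphic primitive on $M\setminus P$ extends meromorphically across $a\in P$ precisely when $\Res_a(hdg) = 0$. Since $P$ is discrete and closed in $M$, the group $H_1(M\setminus P, \Z)$ is generated by the image of $H_1(M,\Z)$, whose basis cycles can be chosen to avoid $P$, together with small loops around each $a\in P$; the period of $hdg$ around such a small loop equals $2\pi \imath\, \Res_a(hdg)$. Combining these observations gives the equivalence of exactness with (a) and (b). The genuine subtlety here, more conceptual than technical, is that (a) alone is not enough: without (b) the form $hdg$ may still produce logarithmic primitives at its poles, and condition (b) is precisely what rules this out.

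The residue identity at a simple pole $a$ of $h$ or $g$ is a direct calculation. In a local coordinate $z$ vanishing at $a$, both $h$ and $g$ have Laurent expansions beginning in degree $-1$, so $dg/dz$ starts in degree $-2$. The coefficient of $z^{-1}$ in the product $h\cdot dg/dz$ receives contributions from exactly two term-pairings: the $z^{-1}$ part of $h$ with the degree-$0$ part $c_1(g,a)$ of $dg/dz$, and the $z^1$ part of $h$ with the leading $-c_{-1}(g,a) z^{-2}$ part of $dg/dz$. This yields \eqref{eq:Res} at once; when only one of $h,g$ has a pole at $a$, the absent $c_{-1}$ coefficient is zero and the identity still holds.
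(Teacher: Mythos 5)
Your proof is correct and takes essentially the same approach as the paper: the paper places the exactness discussion (periods over a homology basis plus residues at poles, with small loops around poles contributing $2\pi\igot\,\Res$) in the paragraph immediately preceding the proposition, and its proof body consists only of the Laurent-series computation of $\Res_a(hg')$ at a simple pole, which matches your term-pairing argument. One small point worth flagging, though it is the paper's imprecision rather than yours: conditions (a) and (b) characterise when $\Fscr(h,g)$ is a well-defined holomorphic Legendrian \emph{map}, not when it is an immersion (that extra condition appears in Corollary \ref{cor:immersions}); your phrasing ``defines a single-valued holomorphic map \dots automatically Legendrian'' is the accurate reading.
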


The situation is more complicated at poles of higher order. However, the case of first order
poles is a generic one in view of Proposition \ref{prop:all}.

\begin{proof}
It remains to show that \eqref{eq:Res} holds at a simple pole $a\in M$ of $h$ or $g$.
In a local holomorphic coordinate $z$ on $M$, with $z(a)=0$, we have that
\begin{eqnarray*}
	h(z) &=& \frac{c_{-1}(h)}{z}+ c_0(h) + c_1(h)z+\cdots,\\
	g(z) &=& \frac{c_{-1}(g)}{z}+ c_0(g) + c_1(g)z+\cdots,\\
	g'(z) &=& -\frac{c_{-1}(g)}{z^2}+ c_1(g)+\cdots, 
\end{eqnarray*}	
from which we easily infer that
\[
	\Res_0(hg') = c_{-1}(h)c_1(g) - c_{-1}(g)c_1(h).
\]
This gives \eqref{eq:Res} and completes the proof.
\end{proof}

\begin{remark}
In particular, if $a\in M$ is a simple pole of $h$ while $g$ is regular at $a$, we have 
\[
	\Res_a (hdg) = \Res_a (hg') = g'(a) \Res_a h.
\]
Similarly, if $a$ is a simple pole of $g$ while $h$ is regular at $a$, we have
\[
	\Res_a (hdg) = h'(a) \Res_a g.
\]
Assuming that $h$ and $g$ have only simple poles and no common pole,
we infer that the $1$-form $hdg$ has vanishing residues precisely when $h$ has a critical
point at each pole of $g$, and $g$ has a critical point at each pole of $h$.
\qed\end{remark}

%
%
\begin{remark}\label{rem:comparison}
An advantage of Bryant's formula \eqref{eq:Bfg} over \eqref{eq:Fhg} is that it
applies to any pair $(f,g)$ of meromorphic functions with $g$ nonconstant. A disadvantage is that the 
Legendrian curve $\Bscr(f,g)$ does not depend continuously on $(f,g)$ near a common critical point 
of $f$ and $g$ (see Remark \ref{rem:discontinuous}). 
This becomes a major drawback especially when trying to construct 
families of Legendrian curves depending continuously on parameters. 
A similar difficulty was encountered in \cite{ForstnericLarusson2018X} when 
studying holomorphic Legendrian curves in projectivised cotangent bundles. 
On the other hand, the Legendrian curve $\Fscr(h,g)$ \eqref{eq:Fhg} 
depends continuously on the pair $(h,g)$ for which $hdg$ is an exact $1$-form.
\qed\end{remark}

%
%
\section{Approximation and interpolation for Legendrian curves in $\CP^3$}   \label{sec:approx-interpol}

In this section we prove the Runge approximation theorem with interpolation at finitely many points for holomorphic 
Legendrian curves in $\CP^3$ (see Theorem \ref{th:Runge}) and holomorphic Legendrian immersions 
(see Theorem \ref{th:Runge-for-immersions}), both from compact and open Riemann surfaces. As a corollary, 
we obtain the interpolation theorem on a discrete set (see Corollary \ref{cor:interpolationCP3}).

We shall use the following version of Runge approximation theorem, proved by 
H.\ L.\ Royden \cite{Royden1967JAM} in 1967, which we state here for the reader's convenience.  
In this theorem, the given function is allowed to have poles on the set where the approximation takes place.

\begin{theorem}[Royden  \cite{Royden1967JAM}]
Let $M$ be a compact Riemann surface and $K\neq M$ be a compact subset of $M$.  
Let $E$ consist of one point in each connected component of $M\setminus K$, let $f$ be holomorphic on a
neighbourhood of $K$ except for finitely many poles in $K$, and $D$ be an effective divisor with support in $K$.  
Given $\epsilon>0$, there is a meromorphic function $F$ on $M$, holomorphic on $M\setminus E$ 
except at the poles of $f$, such that $(f-F)\geq D$ and $\lvert f-F\rvert<\epsilon$ on $K$.
\end{theorem}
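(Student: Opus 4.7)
The plan is to split the statement into an interpolation step (handling the principal parts of $f$ at its poles $P\subset K$ and the Taylor jets along $D$) and a pure approximation step on $K$, linked by a multiplication trick that removes the divisor constraint from the approximation.

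First I would use Riemann-Roch on $M$ to construct a meromorphic function $f_0$ on $M$, holomorphic on $M\setminus(E\cup P)$, whose principal parts at each $p\in P$ agree with those of $f$ and whose Taylor jets at each $q\in\mathrm{supp}(D)$ agree with those of $f$ to order $D(q)-1$ (this treats also the case $q\in P$). The evaluation map from $H^0\bigl(M,\Oscr\bigl(\sum_{p\in P}m_p\,p+N\sum_{e\in E}e\bigr)\bigr)$ onto the finite-dimensional space of prescribed principal parts and jets is surjective for $N$ large by Riemann-Roch, with the obstructing $H^1$ absorbed into the high-order pole at $E$. Setting $h:=f-f_0$, which is holomorphic on a neighbourhood of $K$ and satisfies $(h)\geq D$ on $\mathrm{supp}(D)$, reduces the theorem to approximating $h$ uniformly on $K$ within $\epsilon$ by a meromorphic function $G$ on $M$ with pole set in $E$ and $(G)\geq D$ on $\mathrm{supp}(D)$.

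Next I would choose $\phi$ meromorphic on $M$, holomorphic on $M\setminus E$, vanishing to order exactly $D(q)$ at each $q\in\mathrm{supp}(D)$ and with no other zeros on $K$. For $N'$ large, $H^0\bigl(M,\Oscr(N'\sum_{e\in E}e-D)\bigr)$ is nontrivial by Riemann-Roch, and a generic section has its extra zeros off the compact set $K\setminus\mathrm{supp}(D)$ — this can be arranged by a Jacobi-inversion/moving-the-zeros argument. Then $h/\phi$ is holomorphic on a neighbourhood of $K$ in $M\setminus E$, and the remaining task is to approximate $h/\phi$ within $\epsilon/\sup_K|\phi|$ on $K$ by a meromorphic function $\eta$ on $M$ with poles in $E$. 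Setting $F:=f_0+\eta\phi$ then concludes the proof: on $K$, $|f-F|=|h-\eta\phi|\leq|\phi|\cdot|h/\phi-\eta|<\epsilon$, and on $\mathrm{supp}(D)$ both $h$ and $\eta\phi$ vanish to order $\geq D$ (the latter because $\phi$ does and $\eta$ is holomorphic there), so $(f-F)\geq D$.

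The main obstacle will be this last basic approximation step — approximating $h/\phi$ on $K$ by functions genuinely meromorphic on $M$ with poles confined to $E$, rather than merely holomorphic on the noncompact surface $M\setminus E$, which could develop essential singularities at the points of $E$. The hypothesis that $E$ meets every component of $M\setminus K$ is precisely what ensures no component of $(M\setminus E)\setminus K$ is relatively compact in $M\setminus E$, placing us in the Behnke-Stein-Runge regime; upgrading from "holomorphic on $M\setminus E$" to "meromorphic on $M$ with poles in $E$" is the core technical content, best handled by Hahn-Banach duality on $C(K)$. A complex Borel measure $\mu$ on $K$ annihilating every admissible approximant produces, via pairing with abelian differentials of the third kind on $M$ with simple poles in $E$, a Cauchy-type transform whose vanishing at every point of $E$ to arbitrary order is forced by the orthogonality relations; unique continuation on $M\setminus K$ then yields $\int (h/\phi)\,d\mu=0$, which by Hahn-Banach closes the argument.
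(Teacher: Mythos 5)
The paper does not give a proof of this statement: it is quoted verbatim as a result of Royden, with a citation to \cite{Royden1967JAM}, so there is no internal proof to compare against. Your outline is a plausible reconstruction along classical Runge/Behnke--Stein lines, and it is close in spirit to Royden's own functional-analytic argument. The architecture --- use Riemann--Roch with high-order poles at $E$ to strip off the principal parts at $P$ and the jets along $D$, then a division trick $G=\eta\phi$ to linearise the divisor constraint, and finally a Hahn--Banach duality argument with a Cauchy-type kernel built from meromorphic differentials --- is sound. Your observation that ``$E$ meets every component of $M\setminus K$'' is exactly the no-holes condition needed for the duality step is the right reading of the hypothesis.

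There is, however, one genuine error. You claim that for $N'$ large a \emph{generic} section $\phi$ of $\mathcal O\bigl(N'\sum_E e - D\bigr)$ has its extra zeros off $K\setminus\mathrm{supp}(D)$. This is false whenever $K$ has nonempty interior: by Hurwitz's theorem, the set of sections with a zero in $\mathrm{int}(K)\setminus\mathrm{supp}(D)$ is \emph{open} in $H^0$ (a zero in the interior cannot escape under a small perturbation), and it is nonempty once $N'$ is large enough for the system to be base-point free; hence its complement is not dense, and ``generic'' does not save you. The gap is repairable in two ways. Cheapest: drop the insistence that $\phi$ vanish exactly to order $D(q)$ with no other zeros on $K$. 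Take \emph{any} nonzero $\phi\in H^0\bigl(M,\mathcal O(N'\sum_E e - D)\bigr)$, set $\hat D:=(\phi)_0\cap K\geq D$, and rerun your Step~1 with $\hat D$ in place of $D$; then $h/\phi$ is holomorphic near $K$, $\eta\phi$ vanishes to order $\geq\hat D\geq D$ on $\mathrm{supp}(D)$, and nothing is required of the zeros of $\phi$ outside $K$. Alternatively one can drop the division trick entirely: first approximate $h$ by $G_0$ (poles in $E$) uniformly on a slightly larger compact $K'\supset K$ with $K\subset\mathrm{int}(K')$ whose complementary components still meet $E$; Cauchy estimates then make the $D(q)$-jets of $G_0-h$ at $\mathrm{supp}(D)$ as small as desired, and Riemann--Roch gives a bounded linear right inverse to the jet evaluation on meromorphic functions with poles in $E$, producing a small correction $c$ so that $G=G_0-c$ interpolates. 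Either fix leaves the rest of your argument intact. (If you do want $\phi$ with $(\phi)_0\cap K=D$ exactly, that can also be arranged, but it requires a genuine Jacobi-inversion step to place the complementary divisor in $M\setminus K$, not a genericity observation.)
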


The condition $(f-F)\geq D$ in the theorem simply means that $F$ agrees with $f$ to order $D(x)> 0$
at every point $x\in K$ of the finite support of the divisor $D$.

Here is our first approximation theorem.

%
%
\begin{theorem}   \label{th:Runge}
Let $M$ be a Riemann surface, open or compact, and let $K$ be a compact subset of $M$.  
Every holomorphic Legendrian map $\Phi$ from a neighbourhood of $K$ to $\CP^3$ can be 
approximated uniformly on $K$ by holomorphic Legendrian maps $M\to\CP^3$.  The approximants 
can be taken to agree with $\Phi$ to any finite order at each point of any finite subset of $K$.
\end{theorem}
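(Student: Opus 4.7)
The plan is to use Bryant's representation formula \eqref{eq:Bfg} to reduce the Legendrian approximation problem to two scalar Runge-type approximation problems for meromorphic functions, to which Royden's theorem (for compact $M$) or the standard Runge-Mittag-Leffler theorem on open Riemann surfaces applies. Let $\Lambda\subset K$ be the finite interpolation set and $N$ the prescribed order of agreement. Following the Bertini-type argument in the proof of Proposition \ref{prop:all}, I first select homogeneous coordinates $[z_0{:}z_1{:}z_2{:}z_3]$ on $\CP^3$ so that $H=\{z_0=0\}$ meets $\Phi(K)$ transversely and avoids the finite set $\Phi(\Lambda)$. Proposition \ref{prop:all}(a) then presents $\Phi$ on a neighborhood $U\supset K$ as $\Bscr(f,g)$ for meromorphic $f,g$ with $g$ nonconstant, holomorphic at every point of $\Lambda$, and having only simple poles in $K$; denote the resulting finite pole set in $K$ by $P$.

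Next, I apply Royden's theorem (or its open Riemann surface analogue) separately to $f$ and to $g$, with an effective divisor supported on $\Lambda$ of multiplicity $N+2$ at each point of $\Lambda$. This yields meromorphic functions $F,G$ on $M$ whose principal parts on $P$ coincide with those of $f,g$, satisfying $|F-f|,|G-g|<\epsilon$ uniformly on $K$ (the differences being interpreted via their holomorphic extension across $P$) and vanishing to order at least $N+2$ at every point of $\Lambda$. For $\epsilon$ small, $G$ is nonconstant, and by \eqref{eq:Bfg} the map $\Psi:=\Bscr(F,G):M\to\CP^3$ is a holomorphic Legendrian curve. Applying Royden on a slightly enlarged compact set $K'\subset U$ and invoking Cauchy's estimates, I conclude $|dF-df|,|dG-dg|=O(\epsilon)$ uniformly on $K$. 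Since the four homogeneous components of $\Bscr(\cdot,\cdot)$ are polynomials in $(f,g,df,dg)$, these bounds give a Fubini-Study bound $\dist(\Psi,\Phi)=O(\epsilon)$ on $K\setminus P$, while the order $N+2$ agreement of $(F,G)$ with $(f,g)$ at $\Lambda$ propagates to order $N$ agreement of $\Psi$ with $\Phi$ at every point of $\Lambda$.

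The main obstacle, and the reason Proposition \ref{prop:all}(a) is invoked in its sharp form, is the verification that $\Psi$ approximates $\Phi$ in the Fubini-Study metric \emph{at} the poles $P$ themselves, where both maps cross the hyperplane $H$. Near such a pole $p$, choose a local coordinate $\zeta$ with $\zeta(p)=0$; since the pole is simple and the principal parts match, one has $f(\zeta)=a_f/\zeta+O(1)$ and $F(\zeta)=a_f/\zeta+O(1)$ with the $O(1)$ terms differing by $O(\epsilon)$, and similarly for $g,G$. Multiplying the four homogeneous entries of $\Bscr(f,g)$ and of $\Bscr(F,G)$ by $\zeta^2$ turns both into holomorphic $\C^4$-valued functions on a small disc around $p$ whose sup-norm difference is $O(\epsilon)$; the transversality assertion at the end of Proposition \ref{prop:all} guarantees that the resulting leading vector is nonzero, so this $\C^4$-bound descends to a Fubini-Study bound on the corresponding projective images, closing the estimate uniformly on all of $K$.
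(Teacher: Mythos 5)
Your overall strategy---use Proposition \ref{prop:all}(a) to get a Bryant representation $\Bscr(f,g)$ after a Bertini choice of hyperplane, then approximate $f,g$ via Royden with jet conditions and push through the estimate---is exactly the paper's strategy. But there is a genuine gap: you force Royden's divisor only at the interpolation set $\Lambda$ (and match principal parts at the poles $P$), and then claim that $C^1$-smallness of $(F-f,G-g)$ on $K$ yields a Fubini--Study bound for $\Bscr(F,G)-\Bscr(f,g)$ on $K\setminus P$. This step fails at any \emph{common critical point} of $f$ and $g$ in $K\setminus(\Lambda\cup P)$. There the four lift components $g',\,fg'-\tfrac12 f'g,\,gg',\,\tfrac12 f'$ all vanish simultaneously, so the $\C^4$-valued lift degenerates, and a tiny $C^1$-perturbation of $(f,g)$ can send the projectivised value anywhere. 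This is precisely the discontinuity of $\Bscr$ recorded in Remark \ref{rem:discontinuous}: with $f(x)=x^2$ and $g_\epsilon(x)=(x+\epsilon)^2$, one has $\Bscr(f,g_\epsilon)(0)=[1:0:\epsilon^2:0]\not\to\Bscr(f,g_0)(0)$ as $\epsilon\to 0$, even though $g_\epsilon\to g_0$ in every $C^k$ norm. Note also that transversality of $\Phi$ to $H$, and even $\Phi$ being an immersion, do \emph{not} rule out common critical points of $f$ and $g$ away from $P$ (cf.\ the discussion preceding Lemma \ref{lem:gimmersion}).

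The fix, which is the main content of the paper's proof, is to include the (finitely many) common critical points of $f$ and $g$ in $K$ in the support of the Royden divisor, with order $N$ large enough; one then multiplies the lift components near each such point $p$ by $\zeta^{-m}$, where $m$ is the minimal vanishing order at $p$, and checks term by term (using $z^{-N}\phi_n,\,z^{-N}\psi_n\to 0$ by the maximum principle) that the normalised lifts converge uniformly near $p$. You effectively carry out this normalisation at the poles but omit the critical-point case entirely, and that is where your argument breaks. (Two lesser remarks: at a common simple pole of $f$ and $g$ the relevant factor is $\zeta^{3}$, not $\zeta^{2}$, since $gg'$ and $fg'-\tfrac12 f'g$ have order $-3$; and the reduction from open to compact $M$ should be stated---the paper does it by a standard exhaustion-and-induction argument, which your parenthetical ``open analogue of Royden'' glosses over.)
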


We take a Riemann surface to be connected by definition, but the neighbourhood in the theorem 
need not be connected.

\begin{proof}
First we note that the compact case of the theorem implies the open case.  Indeed, if $M$ is open, we
exhaust $M$ by smoothly bounded compact domains containing $K$ and use induction, applying the compact 
case of the theorem to a compactification of each domain.  Hence, from now on, we assume that $M$ is 
compact.

Let $\Phi$ be a holomorphic Legendrian map from a neighbourhood $V$ of $K\neq M$ to $\CP^3$.  By 
Proposition \ref{prop:all}, we may assume that $\Phi=\Bscr(f,g)$, where $f$ and $g$ are meromorphic on 
$V$ and $g$ is not constant on any connected component of $V$.

Let $B$ be the finite subset of $K$ consisting of the poles of $f$, the poles of $g$, and the common critical 
points of $f$ and $g$ in $K$.  We use Royden's theorem to approximate $f$ and $g$ uniformly on a 
neighbourhood of $K$ by meromorphic functions $f_n$ and $g_n$ on $M$, respectively, such that the 
functions $\phi_n=f_n-f$ and $\psi_n=g_n-g$, which are holomorphic and go to zero uniformly on a 
neighbourhood of $K$, vanish at each point of $B$ to sufficiently high order $N$, independent of $n$, 
to be specified as the proof progresses.

We claim that if $N$ is sufficiently large, then the holomorphic Legendrian maps 
$\Bscr(f_n,g_n):M\to\CP^3$ converge to $\Bscr(f,g)$ uniformly on $K$ as $n\to\infty$.

Near a point $p$ of $K\setminus B$, with respect to a local coordinate $z$ centred at $p$,
\[ 
	\Bscr(f,g)=\big[g' : fg'-\tfrac 1 2 f'g : gg' : \tfrac 1 2 f'\big]. 
\]
On a neighbourhood $U$ of $p$ with $U\cap B=\varnothing$, $f_n\to f$ and $g_n\to g$ uniformly, 
these functions are holomorphic, and the same holds for their derivatives, so
\[
	\big(g_n', f_ng_n'-\tfrac 1 2 f_n'g_n, g_ng_n', \tfrac 1 2 f_n'\big) 
	\lra \big(g', fg'-\tfrac 1 2 f'g, gg', \tfrac 1 2 f'\big) 
\]
uniformly on $U$ as $n\to\infty$.  Also, $(g', fg'-\tfrac 1 2 f'g, gg', \tfrac 1 2 f')\neq (0,0,0,0)$ at every point of $U$, 
so $\Bscr(f_n,g_n)\to \Bscr(f,g)$ uniformly on $U$.

Next, let $p\in B$.  Then the lowest order $m\in\mathbb Z$ at $p$ of the components $g'$, $fg'-\tfrac 1 2 f'g$, 
$gg'$, $\tfrac 1 2 f'$ of $\Bscr(f,g)$ is not zero.  If $N$ is large enough, then a component of $\Bscr(f,g)$ 
of order $m$ corresponds to a component of $\Bscr(f_n,g_n)$ of lowest order, and that lowest order is also $m$.
If we now multiply the components by $z^{-m}$, then we are in the same situation as before and need to 
show that 
\begin{multline*} 
	\big(z^{-m}g_n', z^{-m}(f_ng_n'-\tfrac 1 2 f_n'g_n), z^{-m}g_ng_n', 
	\tfrac 1 2 z^{-m}f_n'\big) \\ \lra \big(z^{-m}g', z^{-m}(fg'-\tfrac 1 2 f'g), z^{-m}gg', \tfrac 1 2 z^{-m}f'\big) \end{multline*}
uniformly near $p$ as $n\to\infty$.  Note that each difference
\begin{multline*} 
	z^{-m}g_n'-z^{-m}g', \quad z^{-m}(f_ng_n'-\tfrac 1 2 f_n'g_n) - z^{-m}(fg'-\tfrac 1 2 f'g), \\ 
	z^{-m}g_ng_n' - z^{-m}gg', \quad \tfrac 1 2 z^{-m}f_n' - \tfrac 1 2 z^{-m}f' 
\end{multline*}
is a sum of terms of the form $z^{-m}$ times one of the functions
\[ 
	\phi_n',\ \psi_n',\  \phi_n\psi_n',\ \phi_n'\psi_n,\ 
	\psi_n\psi_n',\ f'\psi_n,\ f\psi_n',\ g'\phi_n,\ g\phi_n',\ g'\psi_n,\ g\psi_n' 
\]
(perhaps with a factor of $\tfrac 1 2$).  By the maximum principle, $z^{-N}\phi_n$ and $z^{-N}\psi_n$ 
go to zero uniformly near $p$.  Likewise, $z^{-N+1}\phi_n'$ and $z^{-N+1}\psi_n'$ go to zero uniformly 
near $p$.  Hence, if $N$ is big enough, all those differences go to zero uniformly near 
every point $p$ in the finite set $B$.

Jet interpolation can be achieved by taking $N$ large enough and, if necessary, 
adding finitely many points to $B$.
\end{proof}

Next we adapt Theorem \ref{th:Runge} to immersions.  First we need to determine those meromorphic functions $f$ and $g$ for which $\Bscr(f,g)$ is an immersion.

First, if $f$ is constant, then $\Bscr(f,g)=[dg: fdg: gdg:0] = [1:f:g:0]$ is an immersion if and only if $g$ is an immersion.  Now suppose that $f$ is not constant.  In suitable local coordinates centred at a point $p$ in $M$ and at the point $g(p)$ in $\CP^1$, write $g(x)=x^b$, $b\neq 0$, and $f(x)=x^ah(x)$, where $h$ is holomorphic near $p$ and $h(0)\neq 0$.  If $a=0$, then 
\[ 
	\Bscr(f,g) = \left[bx^{b-1} : bx^{b-1}f(x)-\tfrac 1 2 x^bf'(x) : bx^{2b-1} :\tfrac 1 2 f'(x)\right]. 
\]
The orders of the components at $p$ are
\[ 
	\left[b-1 : b-1: 2b-1:\operatorname{ord}_p f'\geq 0 \right]. 
\]
If $a\neq 0$, then
\begin{eqnarray*}
	\Bscr(f,g) &=& \big[ bx^{b-1} : bx^{b-1}x^ah(x)-\tfrac 1 2 x^b(ax^{a-1}h(x)+x^ah'(x)) : \\
	&&  \qquad \qquad\qquad\qquad  bx^{2b-1} : \tfrac 1 2 (ax^{a-1}h(x)+x^ah'(x)) \big], 
\end{eqnarray*}
so the orders of the components at $p$ are
\[ 
	[b-1 : c : 2b-1 : a-1], 
\]
where
\[ 
	c=\left\{ 
	\begin{array}{cl} a+b-1 & \textrm{if $a\neq 2b$,} \\ a+b+\operatorname{ord}_p h' & \textrm{if $a=2b$.} 		\end{array} \right. 
\]
Now $\Bscr(f,g)$ is regular at $p$ if and only if the smallest and the second smallest order differ by 1.  It is easily checked that this condition is satisfied when $g$ is regular at $p$, that is, when $b=\pm 1$.  Indeed, for $b=1$, the orders are
\[ 
	\begin{array}{cl} {[0:0:1: \,\geq 0]} & \textrm{if $a=0$,} \\ {[0:\, \geq 3 :1:1]} & \textrm{if $a=2$,} 
	\\ {[0:a:1:a-1]} & \textrm{if $a\neq 0, 2$,} 
	\end{array}
\]
and for $b=-1$, the orders are
\[ 
	\begin{array}{cl} {[-2:-2:-3: \,\geq 0]} & \textrm{if $a=0$,} \\
	{[-2: \,\geq -3:-3:-3]} & \textrm{if $a=-2$,} \\ 
	{[-2:a-2:-3:a-1]} & \textrm{if $a\neq 0, -2$.}
	\end{array} 
\]
Let us record this observation.

\begin{lemma}\label{lem:gimmersion}
If $g$ is an immersion, then $\Bscr(f,g)$ is an immersion.
\end{lemma}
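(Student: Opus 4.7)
I would prove this by leveraging the case analysis performed in the discussion immediately preceding the lemma. Since $g$ is assumed to be an immersion, in suitable local coordinates centered at $p$ one has $g(x) = x^b$ with $b = \pm 1$, and the text has already enumerated the orders of the four components of the natural lift $\tilde F = (g',\, fg'-\tfrac{1}{2}f'g,\, gg',\, \tfrac{1}{2}f')$ of $F = \Bscr(f,g)$ in every sub-case (three sub-cases each for $b = 1$ and $b = -1$).

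The plan is to recast the immersion question as a linear-algebraic check. Writing $m$ for the minimum of the four component-orders at $p$, and forming the regularised lift $z^{-m}\tilde F$ (which is holomorphic and nonvanishing at $p$), $F$ is an immersion at $p$ iff the $2 \times 4$ matrix formed by $z^{-m}\tilde F$ and its first derivative, both evaluated at $z=0$, has rank $2$; equivalently, iff at least one of its $2\times 2$ Plücker minors is nonzero. A useful preliminary observation is the one-line cancellation $AC' - CA' = (g')^3$, where $A = g'$ and $C = gg'$; so at points where both $f$ and $g$ are regular at $p$ (giving $m=0$) the minor $W_{02}$ already witnesses immersion through the hypothesis $g'(p) \neq 0$. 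For the degenerate case of $f$ constant one observes that $\Bscr(f,g) = [1:f:g:0]$ factors through the linear embedding $\CP^1 \hookrightarrow \CP^3$, $[x:y] \mapsto [x:fx:y:0]$, so the immersion property transfers directly from $g$.

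The main work, and the main obstacle such as it is, lies in the sub-cases where $f$ or $g$ has a pole at $p$. Here $m < 0$ and $W_{02}$ no longer lies at the extremal order $2m$, so a different minor must carry the rank. My plan is to walk through each enumerated sub-case, read off the order tuple, and exhibit two columns of the regularised matrix whose $2 \times 2$ determinant is nonzero. In each case one can locate one column of the form $(\ne 0,\, *)$ paired with a second of the form $(0,\, \ne 0)$, coming respectively from a component whose order equals $m$ and a component whose order exceeds $m$ by exactly $1$; this yields a non-degenerate $2 \times 2$ minor and completes the verification. The enumeration is uniform across the six sub-cases for $b = \pm 1$, so once the framework is set up the remaining verification is purely bookkeeping.
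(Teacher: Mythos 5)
Your proposal is correct and follows essentially the same route as the paper: both proofs reduce to the case enumeration preceding the lemma and verify, for $b=\pm 1$, that among the four component orders the minimum and the next-smallest distinct value differ by $1$, which forces the lift and its derivative to be linearly independent at $p$. Your explicit $2\times 4$ matrix / Plücker-minor framing, the observation $g'(gg')'-gg'(g')'=(g')^3$, and the separate handling of constant $f$ are pleasant clarifications, but the underlying bookkeeping is the same.
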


We see that when $g$ is critical at $p$, that is, $b\geq 2$ or $b\leq -2$, then $\Bscr(f,g)$ is regular at $p$ if, for example, $a=b-1$.  On the other hand, regularity of $f$ may not be enough to ensure regularity of $\Bscr(f,g)$, for example when $a=1$ and $b=3$.

In fact, we see that if $g$ is critical at $p$, then $\Bscr(f,g)$ is regular at $p$ if and only if the degrees of the first two terms in the Laurent series of $f$ at $p$ belong to a certain set of admissible pairs of integers that only depends on the order of $g$ at $p$ (and that is quite complicated to describe explicitly).

%
%
\begin{theorem}   \label{th:Runge-for-immersions}
Let $M$ be a Riemann surface, open or compact, and let $K$ be a compact subset of $M$.  
Every holomorphic Legendrian immersion $\Phi$ from a neighbourhood of $K$ to $\CP^3$ can be uniformly 
approximated on $K$ by holomorphic Legendrian immersions $M\to\CP^3$.  The approximants can be 
taken to agree with $\Phi$ to any given finite order at each point of any given finite subset of $K$.
\end{theorem}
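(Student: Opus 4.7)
The plan is to bootstrap Theorem \ref{th:Runge} by choosing the meromorphic approximants so carefully that the resulting Legendrian map $\Bscr(f_n,g_n)$ is an immersion on all of $M$. First I would reduce the open case to the compact case by exhaustion, exactly as in Theorem \ref{th:Runge}. In the compact case, apply Proposition \ref{prop:all}(a) to write $\Phi = \Bscr(f,g)$ on a neighborhood $V$ of $K$ with $f,g$ meromorphic having only simple poles. By Lemma \ref{lem:gimmersion}, the critical set of $\Bscr(f,g)$ is contained in the critical set of $g$, so the immersion hypothesis on $\Phi$ forces the Laurent series of $f$ at each critical point of $g$ in $K$ to lie in the admissible set of orders described in the discussion preceding the theorem.

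The core argument proceeds in two stages. In the first, apply the proof of Theorem \ref{th:Runge} to obtain a meromorphic $g_n$ on $M$ that is uniformly close to $g$ on $K$ and matches $g$ to sufficiently high jet order at the prescribed interpolation points and at the critical points of $g$ in $K$. For $n$ large, the high-order jet matching pins the critical points of $g_n$ in $K$ to those of $g$ with identical local orders, while the uniform closeness of $g_n'$ to $g'$ on $K$ prevents extra critical points of $g_n$ from appearing in $K$. Fix such a $g_n$ and list its remaining critical points $q_1,\dots,q_r\in M\setminus K$, with orders $b_1,\dots,b_r\geq 2$. In the second stage, produce a meromorphic $f_n$ on $M$ uniformly close to $f$ on $K$, matching the prescribed jets in $K$, and vanishing to order exactly $b_j-1$ at each $q_j$; concretely, choose a meromorphic function $u$ on $M$ that is nonvanishing on a neighborhood of $K$ and has a zero of order $b_j-1$ at each $q_j$, apply Theorem \ref{th:Runge} to approximate $f/u$ by $v_n$, and set $f_n = u\cdot v_n$ (ensuring $v_n(q_j)\neq 0$ by a jet condition). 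The recipe $a=b-1$ from the discussion before the theorem then guarantees that $\Bscr(f_n,g_n)$ is immersed at each $q_j$; Lemma \ref{lem:gimmersion} handles all points where $g_n$ is regular; and the transferred admissibility of $f$ at critical points of $g$ in $K$ handles those. Hence $\Bscr(f_n,g_n):M\to\CP^3$ is a Legendrian immersion approximating $\Phi$ on $K$ with the required jet interpolation.

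The hard part is the interleaving of the two stages: the admissibility data needed for $f_n$ in stage two depend on the critical points of $g_n$ in $M\setminus K$, which are only determined after $g_n$ is fixed in stage one. This forces the construction to be sequential and requires a slight extension of Royden's theorem to accommodate prescribed Laurent data at finitely many points of $M\setminus K$ alongside the uniform approximation and jet interpolation on $K$. The extension itself is routine, following from Runge approximation on $M$ together with Weierstrass/Riemann--Roch existence of meromorphic functions with prescribed zeros; packaging these ingredients and the admissibility bookkeeping into a single clean approximation statement is where the technical work lies.
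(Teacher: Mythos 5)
Your proposal takes essentially the same route as the paper's proof: write $\Phi = \Bscr(f,g)$ via Proposition~\ref{prop:all}, extend $g$ and $f$ to $M$ by Royden-type approximation, prescribe that $f$ have order $b-1$ at each order-$b$ critical point of $g$ outside $K$, and then invoke Lemma~\ref{lem:gimmersion} and the $a=b-1$ recipe. The key ideas match; the differences are organizational.

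Two places where you work harder than necessary (or leave a gap) and where the paper's version is cleaner. First, you carefully pin the critical points of $g_n$ inside $K$ and transfer the admissibility of $f$'s Laurent data there. The paper bypasses all of this by first applying Theorem~\ref{th:Runge} as a black box to produce a Legendrian map $\Bscr(f_0,g)\colon M\to\CP^3$ uniformly close to $\Phi$ on a neighbourhood of $K$; since uniform closeness of holomorphic maps gives $\Cscr^1$ closeness on a slightly smaller set, and being an immersion is $\Cscr^1$-open, $\Bscr(f_0,g)$ is automatically an immersion near $K$, with no case analysis of the orders of $f$ and $g$ in $K$. Second, your ``$f_n = u\cdot v_n$'' construction does not quite close: Royden, as stated, only controls jets (the divisor $D$) inside the compact set, so you cannot directly impose $v_n(q_j)\neq 0$ at the points $q_j\in M\setminus K$; and $f/u$ is not even defined at the $q_j$. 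The ``slight extension of Royden'' you gesture at is not an extension at all but a direct application: one enlarges the compact set to $L = K' \sqcup \bigsqcup_j \overline D_j$ where $K'$ is a compact neighbourhood of $K$ and $\overline D_j$ are closed coordinate discs around the $q_j$, and defines the function to be approximated on $L$ piecewise, equal to the already globally defined $f_0$ near $K'$ and with the prescribed order $b_j-1$ on $\overline D_j$. This is exactly what the paper does, and it is why the paper constructs $(f_0,g)$ on all of $M$ first (so there is a global function to equal near $K'$), rather than approximating $g$ and then $f$ in two separate stages from data available only near $K$. Your two-stage plan can be repaired along these lines, but as written it leaves the behaviour at $q_j$ uncontrolled.

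Also a small slip: in your stage two you ``apply Theorem~\ref{th:Runge} to approximate $f/u$ by $v_n$,'' but Theorem~\ref{th:Runge} concerns Legendrian maps, not scalar meromorphic functions; you mean Royden's theorem.
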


\begin{remark}\label{rem:Runge-complete}
We shall see in Corollary \ref{cor:completeLeg} (ii) that the approximating immersion 
$M\to\CP^3$ in Theorem \ref{th:Runge-for-immersions} can always be chosen {\em complete}, 
i.e., such that the pullback of any Riemannian metric on $\CP^3$ by the immersion is 
a complete metric on $M$. 
\qed\end{remark}

\begin{proof}[Proof of Theorem \ref{th:Runge-for-immersions}]
As in the proof of Theorem \ref{th:Runge}, it suffices to take $M$ to be compact.  By Theorem \ref{th:Runge}, a 
Legendrian immersion $\phi$ from a neighbourhood $U$ of $K$ to $\CP^3$ can be approximated on $K$ by a 
Legendrian map $\Bscr(f_0,g):M\to\CP^3$.  If we approximate sufficiently well on a compact neighbourhood of $K$ 
in $U$, then $\Bscr(f_0,g)$ will be an immersion on a neighbourhood $V$ of $K$.  On $M\setminus V$, $g$ has
finitely many critical points, at which $\Bscr(f_0,g)$ may not be regular.

Let $h$ be a meromorphic function on a neighbourhood of the disjoint union $L$ of a compact neighbourhood $K'$ 
of $K$ and closed coordinate discs centred at each of the critical points of $g$ in $M\setminus K$, such that 
$h=f_0$ near $K'$, and $h$ has order $b-1$ at each critical point of $g$ of order $b$.  As in the proof of Theorem
\ref{th:Runge}, we can use Royden's theorem to approximate $h$ uniformly on $L$ by a meromorphic function $f$
 on $M$ such that: 
\begin{itemize}
\item $\Bscr(f,g)$ is as close as we wish to $\Bscr(f_0,g)$ on $K'$, so in particular, 
$\Bscr(f,g)$ is an immersion on a neighbourhood of $K$,
\item at each critical point of $g$ in $M\setminus K$ of order $b$, $f$ has order $b-1$, 
so $\Bscr(f,g)$ is regular there.
\end{itemize}
Then $\Bscr(f,g):M\to\CP^3$ is a Legendrian immersion that uniformly approximates $\phi$ 
on $K$ as closely as desired.  Finally, jet interpolation can be incorporated using Theorem \ref{th:Runge}.
\end{proof}

%
%
\begin{corollary}\label{cor:interpolationCP3} 
Let $E$ be a closed discrete subset of a Riemann surface $M$.  Every map 
$E\to\CP^3$ can be extended to a holomorphic Legendrian immersion $M\to\CP^3$.
\end{corollary}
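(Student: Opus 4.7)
The plan is to reduce the corollary to Theorem~\ref{th:Runge-for-immersions} by a compact-exhaustion induction. If $M$ is compact, then $E=\{p_1,\dots,p_m\}$ is finite. Through every point of $\CP^3$ passes a holomorphic Legendrian projective line (any line tangent to $\xi$ at that point), so one can pick a local holomorphic Legendrian immersion $\Phi_k$ on a small disc around $p_k$ with $\Phi_k(p_k)=\phi(p_k)$; after shrinking so that the discs are pairwise disjoint, the piecewise definition yields a Legendrian immersion $\Phi$ on a neighbourhood of the compact set $E$. A single application of Theorem~\ref{th:Runge-for-immersions} with $K=E$, asking for value interpolation of $\Phi$ at each $p_k$, then produces the required $F:M\to\CP^3$.

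When $M$ is open, enumerate $E=\{p_1,p_2,\dots\}$ and fix a compact exhaustion of $M$ by smoothly bounded domains $K_1\subset K_2\subset\cdots$ arranged so that $p_n\in K_n\setminus K_{n-1}$ for every $n$. I will construct holomorphic Legendrian immersions $F_n:M\to\CP^3$ inductively so that
\begin{enumerate}[\rm(i)]
\item $F_n(p_k)=\phi(p_k)$ for all $k\le n$, and
\item $F_n$ is uniformly $\epsilon_n$-close to $F_{n-1}$ on $K_{n-1}$ in the Fubini--Study metric,
\end{enumerate}
where the positive numbers $\epsilon_n$ are specified as the construction progresses. The inductive step is to paste $F_{n-1}$ on a neighbourhood of $K_{n-1}$ with a small Legendrian disc sending $p_n$ to $\phi(p_n)$ on a tiny disc around $p_n$ disjoint from $K_{n-1}$, producing a Legendrian immersion $\Psi_n$ on a neighbourhood of the compact set $K_{n-1}\cup\{p_n\}$; Theorem~\ref{th:Runge-for-immersions}, applied with value interpolation at every point of $E\cap K_n$, then delivers $F_n$.

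Taking $\sum \epsilon_n<\infty$ guarantees uniform convergence of $(F_n)$ on every compact subset of $M$ to a continuous limit $F:M\to\CP^3$. Cauchy estimates in affine charts of $\CP^3$ upgrade the $C^0$ convergence on each $K_n$ to $C^\infty$ convergence on compact subsets of its interior, so $F$ is automatically holomorphic, Legendrian, and satisfies $F(p_k)=\phi(p_k)$ for every $k$. The main obstacle is keeping $F$ an immersion: since being an immersion is an open condition in the $C^1$ topology, I will choose $\epsilon_n$ at stage $n$ small enough, relative to the infimum of $|dF_{n-1}|$ on $K_{n-2}$ computed in local charts, that $|dF|$ remains bounded away from zero on each $K_{n-2}$. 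This standard diagonal device ensures that the limit $F$ is a holomorphic Legendrian immersion on all of $M$ extending $\phi$.
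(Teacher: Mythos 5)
Your proof is correct and follows essentially the same route as the paper's: a single application of Theorem~\ref{th:Runge-for-immersions} with jet interpolation at each point of $E$ when $M$ is compact, and an induction over a compact exhaustion using the same theorem when $M$ is open. The paper devotes only one sentence to each case; your version fills in the details the authors leave to the reader, namely building the initial germ of a Legendrian immersion near each $p_k$ from Legendrian projective lines (your parenthetical is correct: a line through $p$ whose tangent at $p$ lies in $\xi_p$ corresponds to a $2$-plane $\Pi\subset\C^4$ on which the symplectic form $d\alpha_0$ vanishes at one pair of spanning vectors, hence identically, so the line is Legendrian), the diagonal choice of $\epsilon_n$ relative to $\inf|dF_{n-1}|$ to keep the limit an immersion, and the Cauchy-estimate upgrade from $\mathscr C^0$ to $\mathscr C^1$ convergence.
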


\begin{proof}
For a compact Riemann surface $M$ this is a corollary of Theorem \ref{th:Runge-for-immersions},
applied to $K$ being the union of small mutually disjoint discs around the points of $E$.
For an open Riemann surface we apply Theorem \ref{th:Runge-for-immersions} inductively,
interpolating at more and more points of the given discrete set as we go.
\end{proof}

%
%
\begin{corollary}\label{cor:Runge-embeddings}
Let $M$ be an open Riemann surface.  Every holomorphic Legendrian immersion $M\to\CP^3$ can be approximated, uniformly on compact subsets of $M$, by holomorphic Legendrian embeddings $M\hra\CP^3$.
\end{corollary}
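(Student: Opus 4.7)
The plan is to prove this by a standard exhaustion-induction argument, reducing the problem to a single-step approximation that removes finitely many self-intersections.

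Fix a normal exhaustion $M=\bigcup_{n\ge 1} K_n$ by smoothly bounded compact Runge subdomains with $K_n\subset \mathrm{int}(K_{n+1})$, and a sequence $\epsilon_n>0$ with $\sum \epsilon_n<\infty$. Given a holomorphic Legendrian immersion $F:M\to\CP^3$, I would construct inductively a sequence of holomorphic Legendrian immersions $F_n:M\to\CP^3$, starting from $F_0=F$, such that
\begin{enumerate}[\rm (i)]
\item $F_n$ is injective on $K_n$,
\item $\|F_n-F_{n-1}\|_{K_{n-1}}<\epsilon_n$ (in a fixed Riemannian distance on $\CP^3$),
\item $F_n-F_{n-1}$ vanishes to sufficiently high order $N_n$ at a prescribed finite set $E_n\subset K_{n-1}$ chosen so that (ii)--(iii) together force the uniform limit $F_\infty=\lim F_n$ to remain an immersion injective on each $K_n$.
\end{enumerate}
With $\epsilon_n$ decaying fast enough and $N_n$ chosen large at the points of $E_n$, the limit $F_\infty$ is a well-defined holomorphic Legendrian immersion on $M$, globally injective, and arbitrarily uniformly close to $F$ on any given compact set. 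Hence $F_\infty$ is the desired embedding.

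The inductive step is the heart of the matter. Suppose $F_{n-1}$ has already been constructed so that it is injective on $K_{n-1}$. Consider the double-point set
\[
 D_n=\{(p,q)\in K_n\times K_n : p\ne q,\ F_{n-1}(p)=F_{n-1}(q)\}.
\]
Because $F_{n-1}$ is a holomorphic immersion from a complex $1$-manifold into the complex $3$-manifold $\CP^3$, two distinct local branches of its image are holomorphic curves in $\CP^3$, and a transversality/dimension count shows that $D_n$ is an analytic subset of expected complex dimension $2-3<0$, hence discrete and, being contained in the compact set $K_n\times K_n$, \emph{finite}. Moreover, since $F_{n-1}$ is injective on $K_{n-1}$, every pair $(p,q)\in D_n$ has at least one coordinate outside $K_{n-1}$. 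I would therefore select, for each such pair, one member lying in $K_n\setminus K_{n-1}$; call these points $q_1,\dots,q_N$, with partners $p_1,\dots,p_N\in K_n$.

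The goal of step $n$ is then to produce a Legendrian immersion $F_n:M\to \CP^3$ that agrees with $F_{n-1}$ to order $N_n$ on $E_n$ (where we include in $E_n$ at least the $p_j$ that lie in $K_{n-1}$), is $\epsilon_n$-close to $F_{n-1}$ on $K_{n-1}$, and additionally satisfies $F_n(q_j)\ne F_n(p_j)$ for each $j$ while remaining injective in a neighbourhood of each point of $K_{n-1}$ where $F_{n-1}$ is injective. By the openness of injectivity on a given compact set, a sufficiently close Legendrian approximation of $F_{n-1}$ on $K_n$ that moves each image $F_{n-1}(q_j)$ strictly away from $F_{n-1}(p_j)$ will automatically be injective on all of $K_n$. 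The approximation and jet-interpolation at the finite set $E_n\cup\{q_1,\dots,q_N\}$ is furnished by Theorem \ref{th:Runge-for-immersions}: I can choose any desired target $1$-jets at the $q_j$ (with $F_n(q_j)$ any prescribed point close to but distinct from $F_{n-1}(p_j)$, and with $dF_n(q_j)$ slightly perturbed from $dF_{n-1}(q_j)$) and then invoke the theorem to produce a global Legendrian immersion realising them up to a small uniform error on $K_n$.

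The main obstacle, and the only nontrivial point in the argument, is verifying that such target $1$-jets at the $q_j$ can be realised by a Legendrian immersion. This amounts to showing that the evaluation of $1$-jets of Legendrian immersions at finitely many points is surjective onto an open subset of the admissible jet space. This follows from the Bryant representation $\Bscr(f,g)$ of Proposition \ref{prop:all}, together with Theorem \ref{th:Runge-for-immersions}: by choosing homogeneous coordinates so that $F_{n-1}=\Bscr(f,g)$ near the finite set of interpolation points, the $1$-jet of $\Bscr(f,g)$ at a regular point of $g$ is a submersive function of the $2$-jets of $(f,g)$, and the functions $f,g$ can be independently prescribed to any finite order by Royden's theorem. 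Hence any jet-data close to that of $F_{n-1}$ can be attained by a nearby Legendrian immersion, and in particular jets that move $F_{n-1}(q_j)$ off $F_{n-1}(p_j)$ can be realised. This completes the inductive step and, with it, the proof.
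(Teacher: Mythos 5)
Your inductive scheme is reasonable in outline, and it resembles what the paper does, but there are two genuine gaps that the paper avoids by invoking a key external ingredient you omit.

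The first gap is your claim that the double-point set $D_n$ is automatically finite. You argue that since $\delta F_{n-1}$ is a holomorphic map from the complex $2$-fold $K_n\times K_n\setminus\Delta$ to the complex $3$-fold $\CP^3$ (locally $\C^3$), its zero set has ``expected complex dimension $2-3<0$, hence discrete''. This is a misuse of the expected-dimension heuristic: the preimage of $0$ under a holomorphic map $\C^2\supset U\to\C^3$ is an analytic set whose components have nonnegative dimension, and can perfectly well be a curve, not a finite set. Concretely, if $F_{n-1}$ factors through a nontrivial covering of Riemann surfaces (or merely happens to identify two branches of the curve over an arc), the double-point locus is positive-dimensional. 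A transversality/Sard argument applied to a \emph{family} of Legendrian perturbations is exactly what is needed to reduce to the finite case, and you never supply one. The paper does not prove this from scratch; it cites the general position theorem \cite[Theorem~1.2]{AlarconForstneric2019IMRN}, which produces a Legendrian \emph{embedding} approximating the given Legendrian immersion on a slightly smaller domain. That theorem is precisely the missing input.

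The second gap is your assertion that a small Legendrian perturbation which merely moves $F_{n}(q_j)$ strictly away from $F_{n}(p_j)$ will ``automatically be injective on all of $K_n$''. Even if $D_n$ were finite, separating the image points at the finitely many double pairs does not preclude a nearby double point: two local branches of a curve in $\CP^3$ which cross at $F_{n-1}(p_j)=F_{n-1}(q_j)$ can, after a perturbation that changes the value at $q_j$ in a \emph{non-generic} direction, still cross at some nearby pair $(p'_j,q'_j)$. What is needed is a genericity statement, e.g., that for a generic small Legendrian perturbation the difference map $\delta F_n(p,q)=F_n(p)-F_n(q)$ is transverse to $0\in\C^3$ away from the diagonal, hence misses $0$ by the codimension count — the very same transversality machinery you skipped in the first step, and the kind of argument used in the paper's own proof of Theorem~\ref{th:CYS4} (see the analysis of the difference map $\delta X$ there). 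Again, the cited \cite[Theorem~1.2]{AlarconForstneric2019IMRN} encapsulates this.

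The paper's proof structure is therefore different and cleaner than yours: at stage $n$ it first applies the general position theorem to get a genuine Legendrian \emph{embedding} $\psi$ on a large compact $K_{n+2}^\circ$, then uses Theorem~\ref{th:Runge-for-immersions} to extend $\psi$ to a global Legendrian immersion $\phi_n$ on $M$ approximating $\psi$ on $K_{n+1}$; since $\psi$ is injective on the whole of $K_{n+1}$ (not merely off a finite exceptional set), a sufficiently close $C^0$-approximation on $K_{n+1}$ together with the interior estimate (immersion and $C^1$-closeness near the diagonal) gives injectivity on the smaller $K_n$. That inference is sound precisely because one starts from an embedding, not from an immersion with isolated double points. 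If you want to salvage your scheme, you must replace your two heuristic claims about $D_n$ with an appeal to (or a proof of) the general position theorem \cite[Theorem~1.2]{AlarconForstneric2019IMRN}, or else run the transversality argument for the difference map $\delta F_n$ directly, as in the proof of Theorem~\ref{th:CYS4} and in \cite[Lemma~4.4]{AlarconForstnericLopez2017CM}.
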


\begin{proof}
Let $\phi:M\to\CP^3$ be a holomorphic Legendrian immersion.  Choose an 
exhaustion $K_1\subset K_2\subset\cdots$ of $M$ by compact subsets without holes
so that each $K_j$ is contained in the interior of the next set $K_{j+1}$.  
By \cite[Theorem 1.2]{AlarconForstneric2019IMRN}, $\phi$ can be approximated arbitrarily closely, 
uniformly on $K_3^\circ$, by a holomorphic Legendrian embedding $\psi:K_3^\circ\to\CP^3$.  
By Theorem \ref{th:Runge-for-immersions}, $\psi$ can be approximated uniformly on 
$K_2$ by a holomorphic Legendrian immersion $\phi_1:M\to\CP^3$.  If the approximation is close enough 
then $\phi_1$ is injective on $K_1$. Repeating the same argument, $\phi_1$ can be approximated arbitrarily 
closely on $K_3$ by a holomorphic Legendrian immersion $\phi_2:M\to\CP^3$ that is an embedding on
$K_2$.  Continuing in this way with sufficiently close approximations and passing to the limit, 
the corollary is proved.
\end{proof}

%
%
\begin{problem}
Let $M$ be a compact Riemann surface. Is it possible to approximate every 
holomorphic Legendrian immersion $M\to\CP^3$ by a holomorphic Legendrian embedding?
\end{problem}

In this connection, Bryant did prove in \cite[Theorem G]{Bryant1982JDG} that every 
compact Riemann surface admits a holomorphic Legendrian embedding into $\CP^3$,
but his proof does not seem to provide an answer to the above problem, and we could not find one either.

\begin{remark}\label{rem:discontinuous}
We have a bijection $(f,g)\mapsto\Bscr(f,g)$ from the space of pairs $(f,g)$ of meromorphic functions on $M$ 
with $g$ nonconstant to the space of holomorphic Legendrian maps $M\to\CP^3$ whose image does not lie in 
a plane of the form $[z_0:z_2] = \text{\rm constant}$.  As noted in Remark \ref{rem:comparison}, this
bijection is not continuous.  (The analogous phenomenon for projectivised cotangent bundles was observed in 
\cite{ForstnericLarusson2018X}.)  Take, for example, $M=\C$, $f(x)=x^2$, and $g_\epsilon(x)=(x+\epsilon)^2$, 
$\epsilon\in\C$.  Then
\[ 	
	\Bscr(f,g_\epsilon)(x) = \left[x+\epsilon: \tfrac 1 2 x(x^2-\epsilon^2): (x+\epsilon)^3: x\right] 
\]
and in particular, $\Bscr(f,g_0)(x)=\left[1:\tfrac 1 2 x^2: x^2: 1\right]$, so 
$\Bscr(f,g_\epsilon)(0)=[1:0:\epsilon^2:0]$ for $\epsilon\neq 0$, but $\Bscr(f,g_0)(0)=[1:0:0:1]$.

The inverse bijection $\Bscr^{-1}$, however, is continuous.  Indeed, we can retrieve $g$ from $\Bscr(f,g)$ by postcomposing by the meromorphic function 
\[ 
	\psi:[z_0:z_1:z_2:z_3] \,\longmapsto\, \dfrac{z_2}{z_0}, 
\]
and retrieve $f$ by postcomposing by
\[ 
	\phi:[z_0:z_1:z_2:z_3] \, \longmapsto\, \dfrac{z_0 z_1 + z_2 z_3}{z_0^2}, 
\]
so $\Bscr^{-1}(h)=(\phi\circ h, \psi\circ h)$ for a holomorphic Legendrian map $h:M\to\CP^3$ whose image does not lie in a plane of the form $[z_0:z_2] = \text{\rm constant}$.  To see that $\Bscr^{-1}$ is continuous, note that the image of $h$ will not lie in the indeterminacy locus of either $\phi$ or $\psi$, since both loci lie in the plane where $z_0=0$.

As shown in the proof of Theorem \ref{th:Runge}, despite the failure of continuity of $\Bscr$, if $(f_n,g_n) \to (f,g)$ uniformly on a neighbourhood of a compact subset $K$ of a Riemann surface and the functions $f_n-f$ and $g_n-g$, which are holomorphic and go to zero uniformly on a neighbourhood of $K$, vanish to sufficiently high order, independent of $n$, at each pole of $f$, pole of $g$, and common critical point of $f$ and $g$ in $K$,  then the holomorphic Legendrian maps $\Bscr(f_n,g_n)$ converge to $\Bscr(f,g)$ uniformly on $K$ as $n\to\infty$.
\qed\end{remark}

%
%

\section{The space of Legendrian immersions $M\to \CP^3$ is path connected}   \label{sec:connected}

In this section we prove the following result. 

\begin{theorem} \label{th:connected}
The space of holomorphic Legendrian immersions from an open Riemann surface to $\CP^3$ is path connected 
in the compact-open topology. 
\end{theorem}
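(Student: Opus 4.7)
The plan is to exploit the continuous parametrization $(h,g) \mapsto \Fscr(h,g)$ from \eqref{eq:Fhg}. As emphasised in Remark \ref{rem:comparison}, this formula depends continuously on the pair of meromorphic functions $(h,g)$ with $hdg$ exact, making it the natural vehicle for constructing continuous paths. By Corollary \ref{cor:immersions}, any continuous path of such admissible pairs $(h_t,g_t)$ for which $(h_t,g_t)$ is immersive on $M\setminus P_t$ yields a continuous path of Legendrian immersions $\Fscr(h_t,g_t)$.

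First I would reduce the two given Legendrian immersions $F_0, F_1:M\to\CP^3$ to representations in a common affine chart. By Bertini one may select a hyperplane $H\subset\CP^3$ transverse to both curves, and then Proposition \ref{prop:all}(b) supplies representations $F_i=\Fscr(h_i,g_i)$ in homogeneous coordinates with $H=\{z_0=0\}$, where $h_i,g_i$ have only simple poles. It thus suffices to connect each $\Fscr(h_i,g_i)$ to a common base immersion: I would fix $F_*=\Fscr(0,G)=[1:0:G:0]$, where $G:M\to\C$ is a holomorphic immersion supplied by the Gunning--Narasimhan theorem.

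The connection from $\Fscr(h,g)$ to $F_*$ is built in three stages. \emph{Stage (a):} Deform $(h,g)$ through Legendrian immersions so that $g$ becomes an immersion $M\to\CP^1$. The key tool is that $g\mapsto g+\eta$ preserves the Legendrian condition whenever $hd\eta$ is exact, and the shears $\eta=\epsilon h^k$ satisfy $hd\eta=\tfrac{k\epsilon}{k+1}d(h^{k+1})$ automatically; combined with the Runge approximation theorem (Theorem \ref{th:Runge-for-immersions}) to globalise such local corrections, these allow one to remove the critical points of $g$ while keeping the pair immersive. \emph{Stage (b):} With $g$ an immersion, apply the scaling $(h_t,g_t)=((1-t)h,g)$: exactness is preserved since $((1-t)h)dg=(1-t)hdg$, and $((1-t)h',g')\ne(0,0)$ holds throughout $t\in[0,1]$ (at $t=1$ because $g$ is an immersion), yielding a path to $\Fscr(0,g)=[1:0:g:0]$. \emph{Stage (c):} Connect $\Fscr(0,g)$ to $F_*=\Fscr(0,G)$ by $\Fscr(0,g_t)$ where $g_t:M\to\CP^1$ is a path of meromorphic immersions from $g$ to $G$; path connectedness of this space is a standard consequence of Oka theory.

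The principal obstacle is stage (a). Since the pair $(h,g)$ can be immersive even when $g$ has critical points---at such points $h'$ must be nonzero, compensating for the vanishing of $g'$---eliminating these critical points by a continuous Legendrian deformation requires maintaining simultaneously the exactness of $hdg$ and the immersion property of the pair. This is the step that forces a careful orchestration of the explicit Legendrian-preserving perturbations described above with the Runge approximation machinery developed in Section \ref{sec:approx-interpol}.
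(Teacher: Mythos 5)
Your reduction to a common basepoint $F_*=\Fscr(0,G)$ is where the argument breaks. That basepoint collapses the curve into the projective line $\{z_1=z_3=0\}\cong\CP^1\subset\CP^3$, and stage (c) then requires the space of meromorphic immersions $g:M\to\CP^1$ to be path connected. It is not, for a general open Riemann surface $M$. By the h-principle for holomorphic immersions combined with Oka theory (exactly the kind of argument run in Section \ref{sec:HP}), $\pi_0$ of this space is $[M,\,T\CP^1\setminus 0]$. The complement of the zero section of $T\CP^1\cong\Oscr_{\CP^1}(2)$ has $\pi_1\cong\Z/2$ (its unit circle bundle over $S^2$ has Euler number $2$, hence is $\RP^3$), so this $\pi_0$ is $H^1(M,\Z/2)$ -- already nontrivial when $M$ is an annulus; replacing the target by $\C$ makes it $H^1(M,\Z)$. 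Thus after stage (a) the two functions $g$ produced from $F_0$ and $F_1$ need not lie in one component, and your stage (c) has no way to proceed. The theorem is nonetheless true precisely because the contact bundle $\xi$ has rank two, so the fibre $\C^2\setminus\{0\}$ of the formal Legendrian space is simply connected; your intermediate degeneration to a rank-one situation throws away exactly the room that makes connectedness possible. Stage (a) is also under-argued: the shears $\eta=\epsilon h^k$ can raise the order of the poles of $g$ where $h$ has poles, can create new critical points of $g$ elsewhere while removing one, there may be infinitely many critical points to remove, and Theorem \ref{th:Runge-for-immersions} is a single-map approximation statement, not a parametric one, so it does not by itself ``globalise'' local moves into a continuous family.

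The paper's proof avoids all of this by connecting $(h_0,g_0)$ to $(h_1,g_1)$ directly rather than through a degenerate basepoint. It transports the pole sets continuously in $t$, prescribes $2$-jets at the moving poles subject to the residue conditions \eqref{eq:Res}, realises these by a parametric Mittag-Leffler/$\dibar$ construction built on Lemma \ref{lem:Weierstrass}, and then enforces the immersion and period-vanishing conditions via Whitney general position, period-dominating sprays and parametric Mergelyan approximation, carried out inductively over an exhaustion of $M$. At no stage is the pair $(h_t,g_t)$ allowed to degenerate, and the argument is parametric throughout rather than relying on one-shot Runge approximation.
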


From a purely technical viewpoint, this may be the most difficult result in the paper. 
The fact that homogeneous coordinates on $\CP^3$ can be chosen such that the meromorphic functions, 
defining a given immersed holomorphic Legendrian curve, have only simple poles (see Proposition \ref{prop:all}) 
is essential in our proof. 

We shall need the following parametric version of Weierstrass's interpolation theorem 
for finitely many points in an open Riemann surface. 
A similar result holds for a variable family of infinite discrete sets of points, 
but this simple version suffices for our present application. 

%
%
\begin{lemma}\label{lem:Weierstrass}
Let $M$ be an open Riemann surface. Given maps $a_j:[0,1]\to M$, $j=1,\ldots,k$,
of class $\Cscr^r$ for some $r\in\{0,1,\ldots,\infty,\omega\}$
and integers $n_1,\ldots,n_k\in\N$, there is a path of holomorphic functions $f_t\in \Oscr(M)$
with $\Cscr^r$ dependence on $t$  such that for every $t\in [0,1]$ and
$j=1,\ldots,k$, the function $f_t$ vanishes to order $n_j$ at $a_j(t)$ and has no other zeros.
 \end{lemma}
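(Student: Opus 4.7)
My plan is to construct $f_t$ in two stages: first as a $\Cscr^r$-family of smooth (non-holomorphic) functions with exactly the prescribed zero divisor, and then as a holomorphic family via a parametric $\dibar$-correction on the Stein surface $M$. As a first reduction, assuming implicitly that the paths $a_j$ are pairwise disjoint at each $t$ (otherwise the required vanishing orders at a crossing would be inconsistent), if $g_{j,t}\in\Oscr(M)$ is a $\Cscr^r$-family of holomorphic functions having only a simple zero at $a_j(t)$, then $f_t:=\prod_j g_{j,t}^{n_j}$ has the required property. Hence it suffices to treat one path $a:[0,1]\to M$ with $n_1=1$.

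Next, construct a smooth global model. Cover $[0,1]$ by finitely many open intervals $I_\ell$ so that $a(I_\ell)$ lies in a parametric disc $D_\ell\subset M$, biholomorphic to $\D$ via a coordinate $\zeta_\ell$; pick $D_\ell'\Subset D_\ell$ still containing $a(I_\ell)$. The local model $\phi_{\ell,t}(z):=\zeta_\ell(z)-\zeta_\ell(a(t))$ is holomorphic on $D_\ell$ and vanishes only at $a(t)$, to first order. Using a $\Cscr^r$ partition of unity $\{\chi_\ell\}$ on $[0,1]$ subordinate to $\{I_\ell\}$ and smooth cut-offs $\rho_\ell$ supported in $D_\ell$ with $\rho_\ell\equiv 1$ on $D_\ell'$, interpolate between the pieces $\phi_{\ell,t}$ (on their respective $D_\ell'$) and a nowhere-zero reference (for instance the constant $1$ off $\bigcup_\ell D_\ell$) to produce a $\Cscr^r$-family $\Psi(\cdot,t)\in\Cscr^\infty(M,\C)$ whose only zero is a simple one at $a(t)$. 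The interpolation must be carried out so that no spurious zeros arise in the transition regions, which can be arranged by a multiplicative patching or by choosing the $\rho_\ell$ close enough to characteristic functions of $D_\ell'$.

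Finally, project onto the holomorphic category by solving $\dibar$ with parameters. On each $D_\ell'$ and on the complement of $\bigcup_\ell D_\ell$ the function $\Psi(\cdot,t)$ is holomorphic in $z$, so $\dibar_z\Psi$ is supported in the transition regions where $\Psi\neq 0$; consequently, the $(0,1)$-form
\[
\eta_t:=\dibar_z\Psi(\cdot,t)/\Psi(\cdot,t)
\]
is smooth on all of $M$ and depends $\Cscr^r$ on $t$. Since $M$ is Stein by the Behnke--Stein theorem, $\dibar$ admits a bounded linear right-inverse $T$ on smooth $(0,1)$-forms with $\Cscr^r$-dependence on parameters (e.g.\ via H\"ormander's $L^2$ theory or via integral operators). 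Setting $w_t:=T\eta_t$ and
\[
g_t:=\Psi(\cdot,t)\,e^{-w_t},
\]
a direct calculation gives $\dibar g_t=0$, so $g_t\in\Oscr(M)$; since $e^{-w_t}$ is nowhere zero, the zero set of $g_t$ coincides with that of $\Psi(\cdot,t)$, namely a simple zero at $a(t)$ only, as required.

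The principal obstacle is engineering the smooth model $\Psi$ with exactly the required zero structure and no extraneous zeros in the transition regions, since a naive additive partition-of-unity sum of the local models $\phi_{\ell,t}$ easily develops spurious zeros between the charts. Once this is done, the logarithmic-derivative form of the $\dibar$-correction preserves the zero divisor exactly, and the openness of $M$ (hence its Stein property) is what makes both the parametric $\dibar$-solution and the background nowhere-zero reference function available.
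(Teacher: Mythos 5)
Your overall strategy (build a smooth $\Cscr^r$-family $\Psi(\cdot,t)$ with exactly the right zero divisor, then correct by a parametric $\dibar$-equation in logarithmic form) is sound in principle, and the $\dibar$-step is correct: with $\eta_t=\dibar\Psi/\Psi$ and $w_t$ solving $\dibar w_t=\eta_t$, one indeed gets $g_t=\Psi\,e^{-w_t}\in\Oscr(M)$ with the same zero divisor as $\Psi(\cdot,t)$. But the proof has a genuine gap exactly where you flag it: the construction of $\Psi$ is not carried out, and the suggested fixes do not obviously resolve the difficulty. An additive partition-of-unity combination such as $\sum_\ell\chi_\ell(t)\bigl[\rho_\ell(z)\phi_{\ell,t}(z)+(1-\rho_\ell(z))\bigr]$ can destroy the zero at $a(t)$ (two coordinate models have derivatives $d\zeta_{\ell_1}$ and $d\zeta_{\ell_2}=\lambda\,d\zeta_{\ell_1}$ at $a(t)$, and a convex combination $\chi_{\ell_1}+\chi_{\ell_2}\lambda$ may vanish) and can create spurious zeros in the transition annuli; ``multiplicative patching'' runs into the multivaluedness of $\log$, and sharpening $\rho_\ell$ does not remove the transition region. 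One does need an actual idea here -- for instance pulling back a fixed function with a simple zero under a $\Cscr^r$-family of diffeomorphisms $\Phi_t$ of $M$ carrying $a(t)$ to a fixed point, which is essentially the device the paper uses (with the flow of a nowhere-zero holomorphic vector field), or exhibiting the family of divisors $[a(t)]$ as the zero divisor of a single function on a product space. Since this is the crux of the lemma, the proposal is incomplete as written.

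For comparison, the paper's proof sidesteps the smooth-model issue entirely by \emph{complexifying in $t$}: a real-analytic $a(t)$ extends holomorphically to $D\subset\C$, its graph is a divisor on the Stein surface $D\times M$, and since $H^2(D\times M,\Z)=0$ Oka's solution of the second Cousin problem produces in one stroke a holomorphic $f\in\Oscr(D\times M)$ vanishing to order one exactly on the graph; the general $\Cscr^r$ case is reduced to the real-analytic one by approximating $a$ and correcting via the holomorphic flow of a nonvanishing vector field, and the extension from a relatively compact $M_0$ to all of $M$ uses the parametric Oka theorem for $\C^*$-valued maps. That route also handles the case $r=\omega$ cleanly, whereas your $\dibar$ approach would additionally need an argument that the chosen solution operator preserves real-analytic dependence on $t$. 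If you supply a correct construction of $\Psi$ (e.g.\ via an isotopy of $M$) and address the $r=\omega$ case, your approach would give a valid alternative proof, and one that is arguably more elementary than the paper's.
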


\begin{proof}
It suffices to prove the result for $k=1$ and $n_1=1$; the general case is 
then obtained by taking for each $j=1,\ldots,k$, a path of functions $f_{j,t}$ with 
a simple zero at $a_j(t)$ and no other zeros, and letting $f_t=\prod_{j=1}^k f_{j,t}^{n_j}$.

Hence, let $a:[0,1]\to M$ be a $\Cscr^r$ function. Assume first that $a$ is
real analytic. Then, $a$ complexifies to a holomorphic map from an open simply 
connected neighbourhood $D\subset \C$ of the interval $[0,1]\subset \R\subset \C$ to $M$.
Its graph $\Sigma=\{(z,a(z)):z\in D\} \subset D\times M$ is a smooth complex hypersurface
which defines a divisor on the Stein surface $D\times M$. Since we clearly have 
$H^2(D\times M,\Z)=0$, K.\ Oka's solution of the second Cousin problem \cite{Oka1939} 
shows that this divisor is defined by a holomorphic function $f\in \Oscr(D\times M)$ 
which vanishes to order $1$ on $\Sigma$ and has no other zeros. The function
$f_t=f(t,\cdotp)\in \Oscr(M)$ then has a simple zero at $a(t)$ and
no other zeros, and its dependence on $t\in [0,1]$ is real analytic.

If $a$ is of class $\Cscr^r$ but not real analytic, we proceed as follows.
Choose a nowhere vanishing holomorphic vector field $V$ on $M$ and a relatively
compact Runge domain $M_0\Subset M$ such that $a(t)\in M_0$ for all $t\in [0,1]$.
There is $\epsilon>0$ such that the flow $\phi_s(x)$ of $V$ exists 
for any initial point $\phi_0(x)=x\in \overline M_0$ and for all $s\in \C$ with $|s|<\epsilon$, 
and $s\mapsto \phi_s(x)$ maps the disc  $|s|<\epsilon$ biholomorphically onto a neighbourhood
$U(x)\subset M$ of $x$. The diameter of these neighbourhoods in 
a fixed metric on $M$ is uniformly bounded from below for $x\in \overline M_0$.
Hence, approximating $a:[0,1]\to M_0$ sufficiently closely by a real analytic
map $\tilde a:[0,1]\to M$, we have that $\tilde a(t)=\phi_{s(t)}(a(t))$ for a unique
$\Cscr^r$ function $s=s(t)$ with $|s(t)|<\epsilon$ for all $t\in [0,1]$. 
If $\tilde f_t\in\Oscr(M)$ is a real analytic path of functions with simple zeros at
$\tilde a(t)$  for $t\in[0,1]$, then $f_t=\tilde f_t\circ \phi_{s(t)} \in \Oscr(M_0)$ is a $\Cscr^r$ 
path of functions with simple zeros at $a(t)$.

To complete the proof, we approximate the path $f_t$ by a path of
holomorphic functions on $M$ without creating additional zeros.
This is done inductively, exhausting $M$ by an increasing sequence
of Runge domains $M_0\subset M_1\subset \cdots \subset \bigcup_{j=1}^\infty M_j=M$ 
and constructing a sequence of $\Cscr^r$ paths $f_{j,t}\in \Oscr(M_j)$ $(j\in \Z_+)$ 
having simple zeros at $a(t)$ and converging to a $\Cscr^r$ path $f_t\in\Oscr(M)$ 
with the same property. Note that $f_{j+1,t}=f_{j,t}g_{j,t}$ on $M_j$, 
where $g_{j,t}$ is a $\Cscr^r$ path in $\Oscr(M_j,\C^*)$. By the parametric Oka
theorem for maps to $\C^*$, we can approximate the path $g_{j,t}$ 
by a $\Cscr^r$ path $\tilde g_{j,t}\in\Oscr(M_{j+1},\C^*)$.
Replacing $f_{j+1,t}$ by $f_{j,t} \tilde g_{j,t}$ gives a $\Cscr^r$ path in $\Oscr(M_{j+1},\C^*)$
which approximates $f_{j,t}$ as closely as desired on a chosen compact subset of $M_j$. 
Assuming that the approximations are close enough, the sequence $f_{j,t}$ converges as $j\to\infty$
to a $\Cscr^r$ path $f_t\in\Oscr(M)$ solving the problem.
\end{proof}

%
%
\begin{proof}[Proof of Theorem \ref{th:connected}]
Given a pair of Legendrian immersions $F_0,F_1:M\to\CP^3$, we must find a 
path of Legendrian immersions $F_t:M\to\CP^3$, $t\in[0,1]$, connecting $F_0$ to $F_1$. 

Choose a hyperplane $H\subset \CP^3$ such that $F_0$ and $F_1$ are transverse to $H$.
(Most hyperplanes satisfy this condition by Bertini's theorem, cf.\ \cite{Kleiman1974}.)
By Proposition \ref{prop:all}, there are homogeneous coordinates $[z_0:z_1:z_2:z_3]$ on $\CP^3$,
with $H=\{z_0=0\}$, such that $F_j=\Fscr(h_j,g_j)$ $(j=0,1)$ where 
$h_j$, $g_j$ are meromorphic functions on $M$ with only simple poles satisfying
conditions \eqref{eq:Res}. To prove the theorem, we shall find 
a path $(h_t,g_t)$ $(t\in [0,1])$ of pairs of meromorphic functions on $M$ 
connecting $(h_0,g_0)$ to $(h_1,g_1)$ and satisfying the following conditions for every $t\in [0,1]$.
(By the assumptions, these conditions hold for $t=0,1$.)
\begin{enumerate}[\rm (i)]
\item $h_t$ and $g_t$ have only simple poles and the relations \eqref{eq:Res} hold.
\item $\int_C h_t dg_t=0$ for every closed curve $C$ in a homology basis of $M$.
\item Let $P_t=P(h_t)\cup P(g_t)\subset M$ denote the union of the polar loci of $h_t$ and $g_t$. 
Then the map $(h_t,g_t):M\setminus P_t\to\C^2$ is an immersion.
\end{enumerate}
In light of (i), condition (iii) is clearly equivalent to 
\begin{enumerate}[\rm (iv)]
\item $(h_t,g_t):M\to(\CP^1)^2$ is an immersion.
\end{enumerate}
The path of holomorphic Legendrian immersions $F_t=\Fscr(h_t,g_t):M\to \CP^3$, $t\in [0,1]$, 
then satisfies the conclusion of the theorem. 

We shall do this by inductively approximating a path $(h_t,g_t)$ satisfying 
the stated conditions on some connected Runge domain $D\Subset M$ 
by a path satisfying the same conditions  on a bigger Runge domain $D'\Subset M$. 
To be precise, we exhaust $M$ by an increasing sequence 
\[
	K_1\subset K_2\subset \cdots\subset \bigcup_{j=1}^\infty K_j = M
\]
of compact smoothly bounded domains without holes such that $K_j\subset K_{j+1}^\circ$ 
for every $j\in\N$. (The set $K_1$ may be chosen as big as desired.) At every stage 
we shall approximate a given family of solutions $(h^j_t,g^j_t)$ on a neighbourhood of $K_j$ by 
one on a neighbourhood of $K_{j+1}$ which has the same jets at a prescribed finite family of points in $K_j$.
Furthermore, we will ensure that $(h^j_t,g^j_t)$ agrees with the given pair $(h_t,g_t)$ 
for $t=0$ and $t=1$.

Since we shall be using partitions of unity on $[0,1]$, we must give ourselves some  
freedom at the endpoints. To this end, choose a small number $0<r_1<1/2$ and define 
\[
	(h_t,g_t)=\begin{cases} (h_0,g_0) & \text{ for}\ t\in[0,r_1], \\
  					      (h_1,g_1) &  \text{ for}\ t\in[1-r_1,1].
			\end{cases}
\]
For $t\in [0,r_1]\cup [1-r_1,1]$ let $A_t,B_t$, $C_t$ denote closed discrete subsets of $M$ such that
\begin{enumerate}[\rm (a)]
\item $A_t$ is the set of poles of $h_t$ which are not poles of $g_t$.
\item $B_t$ is the set of poles of $g_t$ which are not poles of $h_t$.
\item $C_t$ is the set of common poles of $g_t$ and $h_t$.
\end{enumerate}
Thus, the set 
\[	
	P_t:=A_t\cup B_t\cup C_t \subset M
\]
is the union of polar loci of $h_t$ and $g_t$.
(The reason for specifically distinguishing points in $C_t$ will become apparent shortly.)
These sets do not depend on $t$ in the indicated pair of intervals, but they will become $t$-dependent
in subsequent steps when extending them to bigger sets of parameter values $t\in[0,1]$.

In the initial stage of the induction process we shall construct a path $(h_t,g_t)$ of pairs of 
meromorphic functions on an open neighbourhood $U=U_1$ of $K_1$ in $M$
such that conditions (a)--(c) above hold for all $t\in [0,1]$ at the points in $P_t\cap U$. 
This will be done in four steps. The neighbourhood $U$ may shrink around 
$K_1$ at every step without saying so each time.

Fix once and for all a holomorphic immersion $\zeta:M\to \C$
(as provided by the theorem of R.\ Gunning and R.\ Narasimhan \cite{GunningNarasimhan1967}),
so $\zeta$ provides a local holomorphic coordinate at every point of $M$. 
We can express any meromorphic function $h$ in a neighbourhood of a point $p\in M$ 
by a Laurent series in the local holomorphic coordinate $z=\zeta-\zeta(p)$. 
We denote by $c_k(h,p)$ the $k$-th coefficient of $h$ at $p$ in this series.
Note that these coefficients are already defined for our functions $h_t,g_t$ near $t=0,1$, 
they vanish for $k<-1$ since the functions have only simple poles, 
and they satisfy conditions \eqref{eq:Res}.

%
%
\smallskip\noindent
{\em Step 1.} 
Choose a connected open neighbourhood $D_1$ of $K_1$ such that 
$bD_1\cap P_0=bD_1\cap P_1=\varnothing$. For $t\in [0,r_1]\cup [1-r_1,1]$, let 
\[	
	A^1_t= A_t\cap D_1,\quad B^1_t= B_t\cap D_1,\quad C^1_t= C_t\cap D_1.
\]
We now extend each of these sets to all parameter values $t\in[0,1]$ 
(possibly adding more points to the already given sets) as follows.
We connect a point $a\in A^1_0$ to a point $a'\in A^1_1$ by a smooth path $a(t)\in D_1$, $t\in[0,1]$,
so that $a(t)=a$ for $t\in [0,r_1]$ and $a(t)=a'$ for $t\in [1-r_1,1]$, ensuring
that paths of this kind with distinct initial points remain distinct at all $t\in [0,1]$. This is possible
for all points in $A^1_0$ and $A^1_1$ if and only if they have the same cardinality. 
If $A^1_0$ has more points than $A^1_1$, we choose a path $a(t)$ starting at 
a point in $A^1_0$ without a matching pair in $A^1_1$ such that $a(t)$ exits $D_1$ at some time $t_0\in (0,1)$, 
and we define it in an arbitrary way (but staying in $M\setminus D_1$) for the remaining values 
$t\in (t_0,1]$. If on the other hand $A^1_1$ has more points than $A^1_0$, we do the same for points in 
$A^1_1$ without matching pairs in $A^1_0$, with $t$ now running from $t=1$ back to $t=0$. 
(The parts of these paths lying in $M\setminus D_1$ will be redefined in the next stage
of the induction process.) Let $A^1_t\subset M$, $t\in [0,1]$ denote the finite set of points 
obtained in this way. Note that the cardinality of $A^1_t$ for each $t$ equals the bigger of the 
cardinalities of $A_0\cap D_1$ and $A_1\cap D_1$, and in the process we may have added 
more points to these sets.  

We repeat the same procedure with the points in the families $B$ and $C$, making sure that 
the resulting sets $A^1_t,B^1_t,C^1_t$ are pairwise disjoint for all $t\in[0,1]$. Let 
\begin{equation}\label{eq:P1t}
	P^1_t:=A^1_t\cup B^1_t\cup C^1_t.
\end{equation}
By construction, the points in $P^1_t$ vary smoothly with $t$ and their number
does not depend on $t$. Hence,  by Lemma \ref{lem:Weierstrass} there is a smooth path of 
holomorphic functions $f_t\in \Oscr(M)$, $t\in [0,1]$, vanishing to order $2$ at each of the points 
in $P^1_{t}$ and nowhere else.

%
%
\smallskip\noindent
{\em Step 2.} 
Let $D_1$ be the neighbourhood of $K_1$ as in step 1.
We extend the meromorphic jets of $(h_t,g_t)$ containing terms of orders $-1,0,1$ 
in their Laurent series expansion, which are already defined at points in $P^1_t\cap D_1$ for 
$t\in [0,r_1]\cup [1-r_1,1]$, to a smooth path of jets defined at all points of $P^1_t\cap D_1$, 
$t\in[0,1]$, such that conditions \eqref{eq:Res} hold. (Recall that these conditions ensure 
the existence of local meromorphic primitives of $h_tdg_t$ at the points in $P^1_t\cap D_1$.) 

Let us explain the details. We are interested in jets of the form
\begin{eqnarray}\label{eq:xih}
	\xi^h_p(\zeta) &=& c_{-1}^h(p) (\zeta-\zeta(p))^{-1} + c_{0}^h(p) + c_{1}^h(p)(\zeta-\zeta(p)),\\
	\xi^g_p(\zeta) &=& c_{-1}^g(p) (\zeta-\zeta(p))^{-1} + c_{0}^g(p) + c_{1}^g(p)(\zeta-\zeta(p)).
\end{eqnarray}
For $p\in P^1_t\cap D_1$ $(t\in [0,r_1]\cup [1-r_1,1])$ and $j=-1,0,1$, we set
\[
	c_{j}^h(p)=c_{j}(h_t,p),\quad  c_{j}^g(p)=c_{j}(g_t,p),
\]
where $h_t,g_t$ are the initially given meromorphic functions.
It is elementary to extend the coefficients $c_{j}^h,c_{j}^g$ to smooth functions
\[
	c_{j}^h, c_{j}^g : P^1_t\cap D_1  \to \C,\quad t\in [0,1],\ j=-1,0,1,
\]
satisfying the following conditions.
\begin{itemize}
\item At points $p\in A^1_t\cap D_1$ we have $c_{-1}^h(p)\ne 0$ and $c_{1}^g(p)=0$.
\item At points $p\in B^1_t\cap D_1$ we have $c_{-1}^g(p)\ne 0$ and $c_{1}^h(p)=0$.
\item At points $p\in C^1_t\cap D_1$ we have $c_{-1}^h(p)\ne 0$, $c_{-1}^g(p)\ne 0$, and 
\[	
	c_{-1}^h(p) c_{1}^g(p)-c_{-1}^g(p) c_{1}^h(p)=0.
\]
\end{itemize}
These are precisely conditions \eqref{eq:Res}. There are no conditions on $c_0^h$ and $c_0^g$.

\begin{remark}
The above conditions on points $p\in C_t$ allow nonzero values
of $c_{1}^h(p)$ and $c_{1}^g(p)$, while those for points $p\in A_t$ force $c_{1}^g(p)=0$,
and those for points $p\in B_t$ force $c_{1}^h(p)=0$. Hence, if a common pole of 
$h_t$ and $g_t$ split into a pair of distinct poles of these functions
for nearby values of $t$, the required conditions could not always be satisfied in a continuous way.
For this reason, these three types of poles must remain distinct for all values of $t$.
\qed\end{remark}

%
%
\noindent
{\em Step 3.}
We shall find smooth paths $h_t$ and $g_t$ of meromorphic functions on a neighbourhood
$U\subset D_1$ of $K_1$ having the jets constructed in step 2 at the points of $P^1_t\cap U$.
(It will suffice to use paths of class $\Cscr^1$ in the variable $t\in[0,1]$.)
In particular, these functions will agree with the already given ones for $t$ near $0$ and $1$, and they 
will satisfy the following conditions for every $p\in P_t\cap U$ and $t\in [0,1]$:
\[
	c_{\pm 1}^h(p)=c_{\pm 1}(h_t,p),\quad\ c_{\pm 1}^g(p)=c_{\pm 1}(g_t,p).
\] 
These are Mittag-Leffler interpolation conditions at a variable family of points in the open 
Riemann surface $M$. This problem can be solved by using the $\dibar$-equation
together with Lemma \ref{lem:Weierstrass}. An important point is that a convex combination 
of solutions is again a solution, a fact which allows for the use of partitions of unity in the $t$-variable.
A detail that one must pay attention to is that the number of points in the sets
$P^1_t\cap D_1$ may vary with $t$. We now explain how to do this.

Fix a point $t_0\in (0,1)$. We shall first solve the problem locally for $t$ near $t_0$.
(For $t_0=0,1$, we take the already given functions defined on all of $M$.) 
Choose a domain $D'_1$ with $K_1\subset D'_1\subset D_1$
such that $P^1_{t_0} \cap bD'_1=\varnothing$ (see \eqref{eq:P1t}). Then, there is an open
interval $I_{t_0}\subset [0,1]$ around $t_0$ such that  $P^1_{t} \cap bD'_1=\varnothing$
for all $t\in I_{t_0}$. Hence, the number of points in the set
\[
	P^1_{t} \cap D'_1 = \{p_1(t),\ldots,p_k(t)\} 
\]
is independent of $t\in I_{t_0}$. Choose small pairwise disjoint coordinate neighbourhoods 
$U_j\subset D'_1$ of the points $p_j(t_0)$ for $j=1,\ldots, k$, and for each $j$ choose a 
smooth function $\chi_j:M\to[0,1]$ which equals $1$ on a neighbourhood $V_j\Subset U_j$ 
of $p_j(t_0)$ and has support contained in $U_j$. Shrinking the interval $I_{t_0}$ around $t_0$,
we may assume that $p_j(t)\in V_j$ for all $t\in \overline I_{t_0}$ and $j=1,\ldots, k$.
Recall that the jet $\xi^h_p$ is given by \eqref{eq:xih}. We define 
\[
	\tilde h_t(x) = \sum_{j=1}^k \chi_j(x) \, \xi^h_{p_j(t)}(\zeta(x)) \quad \text{for}\ x\in M.
\]
The same expression, with $\xi^h$ replaced by $\xi^g$, defines $\tilde g_t$. 
Note that $\tilde h_t$ is a smooth function on $M\setminus P^1_t$ 
whose restriction to $V_j$ agrees with $\xi^h_{p_j(t)}$ for every $j=1,\ldots,k$;
the analogous statement holds for $\tilde g_t$.  Since the number of points 
$p_j(t)\in P^1_{t} \cap D'_1$ is independent of $t\in I_{t_0}$, Lemma \ref{lem:Weierstrass} furnishes a path
of holomorphic functions $f_t\in \Oscr(M)$ with $t\in I_{t_0}$, vanishing to order $2$ at these points 
and nowhere else.  We look for the desired path $(h_t,g_t)$ with $t\in I_{t_0}$, in the form
\[
	h_t=\tilde h_t - f_t  \mu_t,\qquad  g_t=\tilde g_t - f_t \nu_t,
\]
where $\mu_t,\nu_t$ are paths of smooth functions to be found.  
The choice of $f_t$ implies that $(h_t,g_t)$ has the same jet with coefficients
$-1,0,1$ as $(\tilde h_t,\tilde g_t)$ at the points in $P^1_{t} \cap D'_1$, and hence conditions 
\eqref{eq:Res} still hold for $(h_t,g_t)$.

The condition that $h_t$ is holomorphic (except at the poles in $P^1_t\cap D'_1$) is  
\[
	0=\dibar h_t = \dibar\, \tilde h_t - f_t\, \dibar \mu_t  
	\ \Longleftrightarrow\ \dibar \mu_t =\frac{\dibar\, \tilde h_t}{f_t}=:\alpha_t.
\]
Note that $\dibar \,\tilde h_t$ vanishes in $V^j_t$ for each $j=1,\ldots,k$, and 
since $f_t$ has zeros only on $P^1_t\cap D'_1$, $\alpha_t$ is a 
smooth $(0,1)$-form on $M$ depending smoothly on $t\in I_{t_0}$.  
Hence, the equation $\dibar \mu_t =\alpha_t$ has a solution depending smoothly on 
$t\in I_{t_0}$, and we get a desired path of functions $h_t$ as above. The same procedure
applies to $g_t$. 

%
%
\begin{remark}[On the parametric $\dibar$-equation]
There are several approaches in the literature to solving the nonhomogeneous
$\dibar$-equation by bounded linear operators, which therefore also apply in the parametric case.
In the simple case at hand we have a $1$-parameter family of $\dibar$-equations on a domain in an open
Riemann surface. In this case, a solution operator -- a Cauchy-Green-type operator 
similar to the one in the complex plane -- has already been constructed by 
H.\ Behnke and K.\ Stein in 1949; see \cite{BehnkeStein1949}. A discussion 
of this topic can be found in \cite[Sect.\  2]{FornaessForstnericWold2018};
see in particular Remark 1 there.
\qed\end{remark}

It remains to combine the partial solutions, obtained in this way 
on parameter subintervals $I_{t_0}\subset [0,1]$, into a solution $(h^1_t,g^1_t)$ $(t\in [0,1])$ 
over a neighbourhood $U$ of $K_1$. This is done by applying a smooth partition of unity on $[0,1]$.
We can easily arrange that $g^1_t$ be a nonconstant function for each $t$ 
and $(h^1_t,g^1_t)$ agrees with the initial pair $(h_t,g_t)$ for $t$ near $0,1$.

%
%
\smallskip\noindent
{\em Step 4.} 
We shall deform the path $(h^1_t,g^1_t)$ $(t\in [0,1])$ of meromorphic functions
from step 3, keeping it fixed to the second order at the points in $P^t_1\cap U$ for all $t$, 
and on $U$ for $t$ near $0$ and $1$, to a path of immersions  $(h_t,g_t):U\to(\CP^1)^2$ 
such that the 1-forms $h_t \, dg_t$ have vanishing periods on a system of 
curves forming a homology basis of $K_1$. 
(The neighbourhood $U$ is allowed to shrink around $K_1$.)
Then, $F_t=\Fscr(h_t,g_t)$ for $t\in [0,1]$ (see \eqref{eq:Fhg})
is a path of holomorphic Legendrian immersions from a neighbourhood of $K_1$ into $\CP^3$ 
which agrees with the given path for $t$ near $0$ and $1$. 

The deformation will consist of two substeps. In the first one we shall obtain a path of immersions
$U\to (\CP^1)^2$, and in the second one we will modify it (through a path of immersions) to one 
satisfying also the period vanishing conditions. 

For substep 1 we consider paths $(h_t,g_t)$ of the form
\begin{equation}\label{eq:deformation1}
	h_t=h^1_t+f_t \xi_t,\qquad g_t=g^1_t + f_t \eta_t, 
\end{equation}
where $f_t\in\Oscr(M)$ is a path of holomorphic functions vanishing to the second order at the
points of $P^1_t$ and nowhere else (such a path exists by Lemma \ref{lem:Weierstrass})
and $\xi_t,\eta_t\in \Oscr(U)$ are paths of holomorphic functions to be chosen.
Note that every such map is already an immersion into $(\CP^1)^2$ in small neighbourhoods
of the points in $P^t_1\cap U$ for all $t$. For a generic choice of the pair $\xi_t,\eta_t\in \Oscr(U)$ 
near the zero function, the map $(h_t,g_t):U\to (\CP^1)^2$ is then  an
immersion by H.\ Whitney's general position theorem  \cite{Whitney1936}. Indeed,
the domain of the map has real dimension $3$ (including the $t$ variable) and the maps are smooth, 
so the derivative with respect to the variable $x\in U$ of a generic map $(h_t,g_t)$ 
of this kind misses the origin $0\in \C^2$, the latter being of real codimension $4$. 

To simplify the notation, we denote the resulting path of immersions $U\to (\CP^1)^2$ again by $(h_t,g_t)$.
We may assume that $g_t$ is nonconstant for each $t$. In substep 2 we keep $g_t$ fixed 
and consider deformations of $h_t$ of the form
\begin{equation}\label{eq:deformation2}
	\tilde h_t=h_t + m_t \xi_{t}\quad\text{for}\  t\in [0,1], 
\end{equation}
where $m_t\in\Oscr(U)$ is a path of holomorphic functions vanishing to the second order 
at the points in $P^1_t\cap U$, and also at every critical point of $g_t$ in $U$ which is not a pole of $g_t$,
while $\xi_t\in \Oscr(U)$ is a path of holomorphic functions. A suitable path of multipliers $m_t$ is given by
\[
	m_t = (f_t)^3 (g'_t)^2 \quad\text{for}\ t\in [0,1],
\]
where $f_t\in\Oscr(M)$ is a path of holomorphic functions vanishing to the second order at the
points of $P^1_t$ (such a path exists by Lemma \ref{lem:Weierstrass}) and $g'_t=dg_t/d\zeta$. 
(Here, $\zeta:M\to\C$ is an immersion chosen at the beginning of the proof.)
Indeed, at any (simple) pole of $g_t$ the derivative $g'_t$ has a second order pole,
and since $f_t$ has a second order zero there, $m_t$ vanishes to order $6-4=2$.
On the other hand, at a critical point of $g_t$ which is not a pole, the function 
$(g'_t)^2$ has a second order zero, and hence so does $m_t$. We have that 
\[	
	d\tilde h_t = dh_t + \xi_t dm_t  + m_t d\xi_t.
\] 
At any critical point of $g_t$ not in $P^1_t$, the second and the third term on 
the right hand side vanish but $dh_t$ does not (since $(h_t,g_t)$ is an immersion at
such a point), and hence $d\tilde h_t$ does not vanish either. 
This shows that any choice of path $\xi_t$ in \eqref{eq:deformation2} furnishes a path of 
immersions $(\tilde h_t,g_t):U\to(\CP^1)^2$, and at the poles these functions have only changed 
for a second order term which does not affect the residues of $h_t dg_t$ (these remain zero).

It remains to choose the path $\xi_t\in \Oscr(U)$ in \eqref{eq:deformation2} such that the 
$1$-form $\tilde h_t dg_t$ has vanishing periods on a homology basis of $K_1$.
This can be done by the method in \cite[Sect.\ 4]{AlarconForstnericLopez2017CM}. 
Indeed, since we are deforming our maps only on the complement of the sets of poles, 
we are dealing with the standard contact form \eqref{eq:betast} on $\C^3$ and the results in 
\cite{AlarconForstnericLopez2017CM} apply. 
One uses the convex integration method along with period dominating sprays
and the parametric Mergelyan approximation theorem.
(A proof of the parametric Mergelyan approximation theorem for maps
to any complex manifold is spelled out in \cite[Theorem 4.3]{Forstneric2019IM}.) 
Note that the problem is linear in $\xi_t$, so we may use partitions of
unity in the $t$ variable. This reduces the problem to small subintervals of $[0,1]$
where it is almost the same as the problem for a single map 
treated in \cite{AlarconForstnericLopez2017CM} (since the poles
vary smoothly with $t$).  In particular, locally in $t$ we can choose 
the homology basis of $K_1$ in the complement of $P^1_t$. 
(The parametric case for Legendrian immersions is treated in 
more detail in \cite{ForstnericLarusson2018MZ}.)

This completes the initial stage of the induction. Let us denote the resulting path of meromorphic
functions, defined on a neighbourhood of $K_1$, again by $(h^1_t,g^1_t)$. 
The construction ensures that $(h^1_t,g^1_t)$ agrees with the initial family $(h_t,g_t)$ 
near the endpoints of $[0,1]$.

In the second stage of the induction, we construct a path $(h^2_t,g^2_t)$ $(t\in [0,1])$ of the
same type on a neighbourhood of $K_2$ which approximates $(h^1_t,g^1_t)$ from stage 1 
on $K_1$, agrees with it near $t=0,1$, and satisfies conditions (i)--(iii) 
(stated at the beginning of the proof) on the set $K_2$. This can be done by essentially the same 
procedure as in the initial stage,  but using also the parametric Runge approximation theorem,  
which is a special case of the parametric Oka-Weil theorem; see \cite[Theorem 2.8.4]{Forstneric2017E}). 
In Step 1, the sets $A^2_t,B^2_t,C^2_t$ 
must be defined so that they agree with $A^1_t,B^1_t,C^1_t$ in a neighbourhood of $K_1$
(this amounts to redefining the sets from the initial stage in the complement of $K_1$).
Let $P^2_t = A^2_t\cup B^2_t\cup C^2_t$; so $P^2_t\cap K_1= P^1_t\cap K_1$ for all $t$.
In step 2, we extend the jets of $(h^1_t,g^1_t)$ $(t\in [0,1])$ smoothly from $P^1_t\cap K_1$ to 
$P^2_t\cap K_2$ so that conditions \eqref{eq:Res} hold.
When solving the $\dibar$-problem in step 3, we correct the solutions by using the parametric
Runge theorem so that they approximate those from the first stage on $K_1$. Step 4 can be 
handled by the same tools, using period dominating sprays and the parametric 
Mergelyan approximation theorem in order to preserve the period vanishing conditions on $K_1$
and in addition fulfil those on the new curves in the period basis for $K_2$. The details
are similar to those in \cite{ForstnericLarusson2018MZ} and we omit them.

Proceeding in the same way, we obtain a sequence of solutions $(h^m_t,g^m_t)$ $(t\in [0,1])$ on 
$K_m$ $(m\in\N)$ which approximates $(h^{m-1}_t,g^{m-1}_t)$ on $K_ {m-1}$ and agrees
with the initial data $(h_t,g_t)$ for $t$ near $0$ and $1$. Assuming as we may
that  the approximations are close enough at every step, the sequence $(h^m_t,g^m_t)$ converges 
to a solution $(h_t,g_t)$ on $M$ as $m\to\infty$.
\end{proof}

%
%
\section{The homotopy principle}\label{sec:HP}
Let $M$ be an open Riemann surface.  Let $\Lscr_\textrm{formal}(M,\CP^3)$ be the space of formal holomorphic Legendrian immersions from $M$ to $\CP^3$, that is, commuting squares
\[ 
	\xymatrix{
	TM \ar[r]^\phi \ar[d] & \xi \ar[d] \\ M \ar[r]^f & \CP^3
	} 
\]
where $\xi$ is the contact subbundle of $T\CP^3$, $\phi$ is a monomorphism, and $f$ is holomorphic.  
In this section we show that the inclusion
\[
	 \Lscr(M,\CP^3) \hookrightarrow \Lscr_\textrm{formal}(M,\CP^3) 
\]
induces a bijection of path components.

There are very few results of this kind in the literature.  The full parametric h-principle holds for Legendrian 
holomorphic immersions of $M$ into $\C^{2n+1}$ with the standard complex contact structure 
(see \cite{ForstnericLarusson2018MZ}).  Here, crucial use is made of the projection $\C^{2n+1}\to\C^{2n}$, 
$(x,y,z)\mapsto (x,y)$ (the standard contact form is $dz+xdy$).  For plain maps, not necessarily immersions, 
the h-principle is obvious.  

There are also results for projectivised cotangent bundles with the standard complex contact structure 
(see \cite{ForstnericLarusson2018X}).  The inclusion of the space of holomorphic Legendrian maps $M\to \P T^*Z$, 
where $Z$ is a manifold with $\dim Z\geq 2$, into the space of formal holomorphic Legendrian maps induces 
a surjection of path components.  For closed holomorphic curves that are strong immersions, the inclusion 
induces a bijection of path components.  And if $\dim Z\geq 3$, the inclusion also induces an epimorphism 
of fundamental groups, but this fails in general when $\dim Z=2$.  Here, crucial use is made of the projection 
$\P T^*Z\to Z$ and the fact that its fibres are Oka.

Consider now formal holomorphic immersions of $M$ into an Oka manifold $Y$, directed by a subbundle $\xi$ 
of $TY$.  Trivialise $TM$ once and for all.  Then a formal holomorphic immersion $M\to Y$, directed by $\xi$, 
is nothing but a holomorphic map $M\to E$, where $E$ is the holomorphic fibre bundle over $Y$ obtained 
from $\xi$ by removing the zero section.  The fibre of $E$ is $\C^k\setminus\{0\}$, where $k$ is the rank of $\xi$.
Hence $E$ is an Oka manifold, so the inclusion $\Oscr(M,E) \hookrightarrow \mathscr C(M,E)$ is a weak equivalence.  
Determining the weak homotopy type of the space of formal holomorphic immersions $M\to Y$, directed by $\xi$, 
is thus reduced to a purely topological problem.

The long exact sequence of homotopy groups
\[ \cdots \to \pi_1(\C^k\setminus\{0\}) \to \pi_1(E) \to \pi_1(Y) \to \cdots  \]
shows that if $Y$ is simply connected and $k\geq 2$, then $E$ is simply connected, so $\Oscr(M,E)$ is path 
connected.  The basic h-principle follows, as long as there is at least one genuine holomorphic immersion 
$M\to Y$, directed by $\xi$.

Now we return to holomorphic Legendrian immersions of $M$ into $\CP^3$.  Here, of course, $\CP^3$ is simply 
connected and $k=2$, so $\Lscr_\textrm{formal}(M,\CP^3)$ is path connected.  Also, by Theorem \ref{th:connected}, 
$\Lscr(M,\CP^3)$ is path connected and clearly nonempty (consider for example $\Bscr(1,g)=[1,1,g,0]$, where 
$g:M\to\C$ is a holomorphic immersion, as provided by the theorem of 
Gunning and Narasimhan \cite{GunningNarasimhan1967}).  Thus we have the following h-principle.

\begin{theorem}\label{th:h-principle}
Every formal holomorphic Legendrian immersion from an open Riemann surface to $\CP^3$ can be 
deformed to a genuine holomorphic Legendrian immersion, unique up to homotopy.
\end{theorem}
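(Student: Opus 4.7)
The plan is to show that both the space $\Lscr(M,\CP^3)$ and the space $\Lscr_{\text{formal}}(M,\CP^3)$ have exactly one path component; the theorem then follows because the inclusion is a map between singletons at the level of $\pi_0$ (surjectivity gives the deformation, injectivity gives uniqueness up to homotopy).

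First I would make precise the identification of formal holomorphic Legendrian immersions with holomorphic sections of an Oka bundle. Since $M$ is an open Riemann surface, $TM$ is holomorphically trivial, so I fix a trivialisation once and for all. Under this trivialisation, a formal holomorphic Legendrian immersion $(f,\phi)$ with $f:M\to\CP^3$ holomorphic and $\phi:TM\to f^*\xi$ a monomorphism corresponds precisely to a holomorphic map $M\to E$, where $E$ is the complement of the zero section in the total space of the rank-2 holomorphic bundle $\xi\subset T\CP^3$. Now $E$ is an Oka manifold: it is the complement of a closed analytic subset (the zero section) in the total space of a holomorphic vector bundle over the Oka manifold $\CP^3$, and standard ascent/descent results for the Oka property apply (alternatively, $E\to\CP^3$ is a fibre bundle with Oka fibre $\C^2\setminus\{0\}$ over an Oka base). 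Consequently, the inclusion $\Oscr(M,E)\hookrightarrow \Cscr(M,E)$ is a weak homotopy equivalence, and in particular induces a bijection of path components.

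Second, I would compute $\pi_0\,\Cscr(M,E)$ purely topologically. The long exact homotopy sequence
\[
	\pi_1(\C^2\setminus\{0\})\to \pi_1(E)\to \pi_1(\CP^3)
\]
of the fibration $\C^2\setminus\{0\}\to E\to \CP^3$ shows that $\pi_1(E)=0$, since $\C^2\setminus\{0\}\simeq S^3$ is simply connected and $\CP^3$ is simply connected. Because the open Riemann surface $M$ has the homotopy type of a bouquet of circles, continuous maps $M\to E$ are classified up to homotopy by induced maps on $\pi_1$; the vanishing of $\pi_1(E)$ together with path-connectedness of $E$ therefore implies that $\Cscr(M,E)$ is path connected. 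Combining this with the previous step, $\Lscr_{\text{formal}}(M,\CP^3)$ is path connected.

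Third, I would cite Theorem \ref{th:connected} to conclude that the genuine space $\Lscr(M,\CP^3)$ is also path connected. It is nonempty: taking a holomorphic immersion $g:M\to\C$ provided by Gunning–Narasimhan, the map $\Bscr(1,g)=[1:1:g:0]$ is a genuine holomorphic Legendrian immersion. Both spaces being nonempty and path connected, the inclusion $\Lscr(M,\CP^3)\hookrightarrow \Lscr_{\text{formal}}(M,\CP^3)$ is a bijection on $\pi_0$, which is the conclusion of the theorem.

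The substantive work all lies outside the present argument: the genuine path-connectedness rests entirely on Theorem \ref{th:connected}, whose proof is the technical heart of the paper. The only remaining point requiring care here is the Oka identification step, namely verifying that $E$ is Oka and that the trivialisation of $TM$ yields the asserted identification of $\Lscr_{\text{formal}}(M,\CP^3)$ with $\Oscr(M,E)$ at the level of path components. No further analytic work is needed.
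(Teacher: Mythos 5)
Your proposal is correct and follows essentially the same route as the paper: trivialise $TM$, identify formal Legendrian immersions with holomorphic maps to the Oka bundle $E$ (the complement of the zero section in $\xi$), use the long exact homotopy sequence to see $E$ is simply connected, and cite Theorem~\ref{th:connected} for path-connectedness of the genuine space. The only cosmetic difference is that you pass explicitly through $\pi_0\,\Cscr(M,E)$, which the paper does implicitly.
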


%
%
\section{Calabi--Yau property and complete immersions}\label{sec:CY}

In this and the following sections we discuss implications of our new results, as well as 
those from some other recent papers, to the existence of complete Legendrian 
curves in $\CP^3$ and conformally immersed superminimal surfaces in the $4$-sphere $\S^4$.

We begin by discussing completeness of immersions on a formal level, with the aim 
of conceptualising and unifying phenomena of this type in different geometries. 

Let $(N,g)$ be a connected smooth Riemannian manifold of dimension $n$, possibly endowed with 
some additional structure. For example, $N$ could be a complex manifold, a complex contact manifold, 
a manifold with a chosen subset of the tangent bundle, etc. 

Assume that $M$ is a smooth manifold of dimension $\dim M<n=\dim N$.
For every immersion $F:M\to (N,g)$ we have the induced Riemannian metric $F^*g$ and distance function
$\dist_F$ on $M$. Assume now that $M$ is either noncompact, or a compact manifold with nonempty 
smooth boundary. For a fixed interior point $p_0\in M$ we denote by 
\begin{equation}\label{eq:radius}
	R_F(M,p_0)\in (0,+\infty]
\end{equation}
the {\em intrinsic radius} of $M$, defined as the infimum of lengths (in the 
metric $F^*g$) of all divergent paths $\gamma:[0,1)\to M$ with $\gamma(0)=p_0$. 
(A path $\gamma$ is said to be {\em divergent} if the point $\gamma(t)\in M$ leaves any compact subset of 
the interior of $M$ as $t\to 1$.) If $M$ is a compact manifold with boundary $bM$ and 
$F:M\to N$ is  an immersion, then $R_F(M,p_0) = \dist_F(p_0,bM)$ is simply the distance from $p_0$
to $bM$ in the metric $F^*g$. Changing the base point clearly changes the intrinsic radius by an additive 
constant which is irrelevant in our considerations. 

An immersion $F:M\to N$ is said to be {\em complete} if $F^*g$ is a 
complete metric on $M$, i.e., the induced distance function $\dist_F$ makes $M$ into a complete 
metric space. If $M$ is an open manifold (noncompact and without boundary), this is equivalent to 
$R_F(M,p_0)=+\infty$. We refer to M.\ do Carmo \cite{doCarmo1992} where these concepts are explained 
in more detail.

Consider a class $\Fscr(\cdotp,N)$ of $\Cscr^1$ immersions from smooth manifolds $M$ 
of dimension $\dim M<n=\dim N$, possibly with boundary, into a given manifold $N$.  
(Typically one considers immersions which are solutions of some elliptic PDE, 
so they are smooth in the interior of $M$.)
For a given $M$, we denote by $\Fscr(M,N)$ the space of immersions $M\to N$ of this class, 
endowed with the $\Cscr^1$ topology.  The source manifolds may also carry additional geometric structures. 
For example, they may be conformal surfaces or Riemann surfaces, the case of main interest to us.  
As for the class $\Fscr(\cdotp,N)$, we could be considering for example conformal minimal
immersions from conformal surfaces to a Riemannian manifold $(N,g)$, 
or holomorphic immersions from Riemann surfaces into a complex manifold $N$, 
or holomorphic Legendrian immersions into a complex contact manifold $(N,\xi)$,
or holomorphic null curves $M\to N=\C^n$ with $n\ge 3$, etc. We shall say that $M$ and $N$ are {\em admissible}
for the given class of immersions if the definition of the class makes sense for them.
For example, when considering holomorphic immersions, our manifolds must be complex, 
and for conformal immersions, they must be conformal manifolds. The precise smoothness class 
of manifolds and immersions may depend on the situation. 

We shall assume the following conditions on a class $\Fscr(\cdotp,N)$ that we wish to consider.

%
%
\begin{enumerate}[\rm (a)]
\item 
If $F\in \Fscr(M,N)$ and $M_0\subset M$ is either an open domain or a compact smoothly bounded domain, 
then $F|_{M_0}\in \Fscr(M_0,N)$. Conversely, if $F:M\to N$ is an immersion which is of class 
$\Fscr(\cdotp,N)$ on an open neighbourhood of any point of $M$, then $F\in \Fscr(M,N)$. 
\item 
If $M$ is a compact admissible manifold with boundary, then $\Fscr(M,N)$ is nonempty.
\item
If a sequence $F_j\in \Fscr(M,N)$ $(j\in\N)$ converges in the $\Cscr^1(M,N)$ topology on compacts in $M$ 
to an immersion $F:M\to N$, then $F\in \Fscr(M,N)$.
\item
(Interior estimates.) 
Let $g_0$ be a Riemannian metric on $M$. Given $F\in \Fscr(M,N)$, a pair of relatively 
compact domains $M_0\Subset M_1\subset M$ and a number $\epsilon>0$, there is $\delta>0$
such that for any $G\in \Fscr(M_1,N)$, we have that
\begin{equation}\label{eq:elliptic}
	\max_{p\in M_1}\dist_g(F(p),G(p)) < \delta 
	\ \Longrightarrow\   \max_{p\in M_0} \dist_{g_0,g}(dF_p,dG_p) <\epsilon.
\end{equation}
\end{enumerate}

Condition (a) says that immersions of class $\Fscr$ are sections of a sheaf of immersions.
Condition (b) is typically fulfilled by the existence of $\Fscr$-immersions $M\to N$ with values in a chart of $N$. 
Condition (c) says that  $\Fscr(M,N)$ is closed in the space of all immersions $M\to N$
in the $\Cscr^1$ topology. Condition (d) means that the distance between
$F$ and $G$ in the $\Cscr^1$ topology on the smaller domain $M_0$ can be estimated by the 
distance between the two maps in the $\Cscr^0$ topology (i.e., the uniform distance) on the bigger 
domain $M_1$. This is a typical elliptic type estimate which holds whenever our immersions are solutions 
of some elliptic PDE; in particular, it holds for harmonic and holomorphic maps.
 
We have already mentioned the Calabi--Yau problem for minimal surfaces in the introductory 
section. We now introduce the following key condition which lies behind all recently established 
Calabi--Yau-type theorems in various geometries.

%
%
\begin{definition}[Calabi--Yau property] \label{def:CY}
Assume that $(N,g)$ is a Riemannian manifold and $\Fscr(\cdotp,N)$ is a class of immersions 
into $N$ satisfying conditions (a)--(d) above. The class $\Fscr(\cdotp,N)$ enjoys 
the {\em Calabi--Yau property} if the following holds true. Given a compact $\Fscr$-admissible 
manifold $M$ with boundary $bM$, a point $p_0\in M\setminus bM$,  
an immersion $F_0 \in \Fscr(M,N)$, and numbers $\epsilon >0$ (small) 
and $\lambda>0$ (big), there exists an immersion $F\in \Fscr(M,N)$ such that 
\begin{equation}\label{eq:increase}
	\dist_g(F,F_0):=\max_{p\in M}\dist_g(F(p),F_0(p))<\epsilon \ \ \text{and}\ \ 
	R_F(M,p_0) > \lambda.
\end{equation}
\end{definition}
 
Recall that $R_F$ denotes the intrinsic radius \eqref{eq:radius} of the immersion $F$.

The following result may be viewed as an {\em abstract Calabi--Yau theorem}.
It is motivated by the classical Calabi--Yau problem for minimal surfaces, and it
summarises all recent results on this subject in the literature (see Example \ref{ex:CY}).
For the history of this problem, see the discussion and references in 
\cite{AlarconForstneric2019JAMS,AlarconForstneric2019RMI,AlarconForstnericLopez2021}.

%
%
\begin{theorem}\label{th:abstractCY}
Assume that $(N,g)$ is a Riemannian manifold and $\Fscr(\cdotp,N)$ is a class of immersions 
into $N$ satisfying conditions (a)--(d) above and the Calabi--Yau property
(see Definition \ref{def:CY}). Let $M$ be a compact $\Fscr$-admissible manifold with boundary.
Then, every $F_0\in \Fscr(M,N)$ can be approximated as closely as desired uniformly on $M$ by a
continuous map $F:M\to N$ such that $F|_{M^\circ}: M^\circ=M\setminus bM \to N$ 
is a complete immersion in $\Fscr(M^\circ,N)$. 

If in addition the immersion $F$ in \eqref{eq:increase} can always be 
chosen injective on $M$ or $bM$, then $F$ as above can be chosen injective on $M$ or $bM$, respectively.

If in addition the immersion $F$ in \eqref{eq:increase} can always be chosen to agree with $F_0$ to a given finite order 
at each point in a given finite subset of $M^\circ$, then $F$ as above can also be so chosen.

Furthermore, if $M_0$ is a domain in $M^\circ$ obtained by removing 
from $M^\circ$ a countable family of pairwise disjoint, compact, smoothly bounded domains $D_j$, $j\in\N$,
then for every $F_0\in \Fscr(M,N)$ and $\epsilon>0$, there exists a continuous map
$F:\overline M_0\to N$ such that $\dist_g(F,F_0|_{\overline M_0})<\epsilon$ and 
$F|_{M_0}:M_0\to N$ is a complete immersion in $\Fscr(M_0,N)$.
\end{theorem}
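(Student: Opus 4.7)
The plan is to realise $F$ as the uniform limit on $\overline{M_0}$ of a sequence of continuous maps $F_n\colon\overline{M_0}\to N$ built by iterating the Calabi-Yau property along an exhaustion of $M_0$. First, I would fix a nested sequence
\[
\Omega_1\Subset\Omega_2\Subset\cdots,\qquad \bigcup_{n\ge 1}\Omega_n=M_0,
\]
of compact, connected, smoothly bounded subdomains of $M_0$; concretely, $\Omega_n$ can be taken to be the connected component of a fixed base point $p_0\in M_0$ in the set of points of $M_0$ lying at background distance at least $1/n$ from $bM$ and from $\bigcup_{j\le n}bD_j$. This arrangement ensures that every divergent path in $M_0$ --- whether approaching $bM$, approaching some $bD_j$, or escaping to a cluster point of the family $\{D_j\}$ --- must eventually leave every $\Omega_n$. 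I would also fix a summable sequence $\epsilon_n>0$ with $\sum_n\epsilon_n<\epsilon$ and a divergent sequence $\lambda_n\to+\infty$.

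The inductive step will take $F_{n-1}$, starting from $F_0|_{\overline{M_0}}$, to $F_n$ as follows. Maintaining the inductive hypothesis that $F_{n-1}$ is of class $\Fscr$ on a neighbourhood in $M_0$ of $\Omega_n$, I would apply the Calabi-Yau property to the $\Fscr$-admissible compact $\Omega_n$ with initial immersion $F_{n-1}|_{\Omega_n}$ and parameters $(\delta_n,\lambda_n)$, where $\delta_n\in(0,\epsilon_n)$ is fixed in advance using the interior estimates (d) so as to control $\Cscr^1$ distances on compact subsets of $\Omega_n^\circ$ up to $2^{-n}$. This furnishes $\widetilde F_n\in\Fscr(\Omega_n,N)$ with $\dist_g(\widetilde F_n,F_{n-1}|_{\Omega_n})<\delta_n$ and $R_{\widetilde F_n}(\Omega_n,p_0)>\lambda_n$. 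I would then extend $\widetilde F_n$ to a continuous map $F_n\colon\overline{M_0}\to N$ by matching $F_{n-1}$ outside a slightly larger compact $\Omega_n^+$ with $\Omega_n\Subset\Omega_n^+\Subset\Omega_{n+1}^\circ$, interpolating continuously in the collar $\Omega_n^+\setminus\Omega_n$ within a local chart of $N$ (feasible since the two matching values differ by at most $\delta_n$). The resulting $F_n$ satisfies $F_n|_{\Omega_n}=\widetilde F_n\in\Fscr(\Omega_n,N)$ and $\dist_g(F_n,F_{n-1})<\epsilon_n$ on all of $\overline{M_0}$.

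Passing to the limit, summability of $\{\epsilon_n\}$ produces a continuous map $F=\lim_n F_n\colon\overline{M_0}\to N$ with $\dist_g(F,F_0|_{\overline{M_0}})<\epsilon$. For any compact $K\subset M_0$ I would pick $n_0$ with $K\subset\Omega_{n_0}$; for $m\ge n_0$ the restrictions $F_m|_K$ lie in $\Fscr(K,N)$, and uniform convergence together with (d) upgrades to $\Cscr^1$ convergence on compact subsets of $K^\circ$, so (c) gives $F|_{K^\circ}\in\Fscr(K^\circ,N)$. Since the interiors $\Omega_n^\circ$ cover $M_0$, the sheaf-like property (a) yields $F|_{M_0}\in\Fscr(M_0,N)$. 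For completeness, the advance $\Cscr^1$ control provides $R_F(\Omega_n,p_0)\ge R_{F_n}(\Omega_n,p_0)-\eta_n$ with $\eta_n\to 0$; since $R_{F_n}(\Omega_n,p_0)>\lambda_n\to+\infty$ and every divergent path in $M_0$ eventually exits every $\Omega_n$, each such path has infinite $F^*g$-length, which is exactly completeness.

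The hardest part will be the propagation of the inductive hypothesis: the continuous collar extension at step $n$ does not automatically yield an $\Fscr$-map on a neighbourhood of $\Omega_{n+1}$, which is what is needed to apply Calabi-Yau at the next step. I would handle this by giving the exhaustion sufficient slack that the step-$n$ collar is contained in $\Omega_{n+1}^\circ$, and by combining the continuous collar extension with the jet-interpolation strengthening of Calabi-Yau (provided by the preceding part of the theorem) at finitely many points chosen densely on $\partial\Omega_n$: matching jets reduces the collar transition to a normal perturbation that can be absorbed into a local $\Fscr$-deformation, so that the transient continuity-only layer is replaced by a genuine $\Fscr$-map by the time it enters the next CY domain, and the induction proceeds.
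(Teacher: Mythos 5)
Your proposal addresses mainly the final statement (the domain $M_0$ obtained by removing countably many disks) and takes a genuinely different route from the paper: you exhaust $M_0$ from inside by an \emph{increasing} sequence of compact domains $\Omega_1\Subset\Omega_2\Subset\cdots$ and attempt to glue the output of each Calabi-Yau step to the previous map across a collar. The paper instead works with the \emph{decreasing} sequence of compact bordered manifolds $\overline M_k=\overline{M^\circ\setminus\bigcup_{j\le k}D_j}$, each of which contains $\overline M_0$, and applies the Calabi-Yau property directly to $\overline M_k$ at stage $k$. Because each $F_k$ lives on the entire compact bordered manifold $\overline M_k\supset\overline M_0$ and is genuinely of class $\Fscr(\overline M_k,N)$, there is nothing to glue, the inductive hypothesis propagates trivially, and the argument is also uniform across the first and last parts of the theorem (the first part being the special case $k=0$).

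There is a genuine gap in your construction, and you have correctly located it but not closed it. After applying the Calabi-Yau property on $\Omega_n$ you obtain $\widetilde F_n\in\Fscr(\Omega_n,N)$, and you then produce $F_n:\overline M_0\to N$ by a merely \emph{continuous} interpolation in the collar $\Omega_n^+\setminus\Omega_n\subset\Omega_{n+1}^\circ$. This destroys the property you need at the next stage, namely that $F_n$ lie in $\Fscr$ on a neighbourhood of $\Omega_{n+1}$: the Calabi-Yau property takes as input an element of $\Fscr(\Omega_{n+1},N)$, not a continuous map that agrees with an $\Fscr$-map off a collar. Your proposed repair does not work: matching jets at finitely many points of $\partial\Omega_n$ cannot turn a continuous map on a two-dimensional annulus into an $\Fscr$-map, and there is no mechanism in axioms (a)--(d) by which a continuity-only layer can be ``absorbed into a local $\Fscr$-deformation'' --- the class $\Fscr$ is rigid (cut out by a PDE or a directional constraint), and a general continuous map is not close to it in any sense the axioms control. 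Moreover, invoking the ``jet-interpolation strengthening of Calabi-Yau (provided by the preceding part of the theorem)'' is circular, since that clause of the theorem is precisely what is being proved (and is itself stated only as a conditional strengthening of the hypothesis \eqref{eq:increase}). The fix is to abandon the inner exhaustion-plus-gluing strategy altogether and instead run the induction over the decreasing ambient compacta $\overline M_k$ as the paper does, applying Calabi-Yau to $F_{k-1}|_{\overline M_k}\in\Fscr(\overline M_k,N)$ at each step; the rest of your limiting argument (summable $\epsilon_n$, $\Cscr^1$ convergence on compacta via (d), membership in $\Fscr(M_0,N)$ via (a) and (c), and completeness via \cite[Lemma 2.2]{AlarconForstneric2019RMI}) then goes through essentially as you wrote it.
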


\begin{proof}
The first statement is seen by following  
\cite[proof of Theorem 1.1]{AlarconDrinovecForstnericLopez2015PLMS}.
Indeed, the Calabi--Yau property allows one to construct a sequence
of immersions $F_j\in \Fscr(M,N)$ $(j\in\N)$ which converges uniformly
on $M$ to a continuous map $F:M\to N$ such that 
\begin{equation}\label{eq:radiigrow}
	\lim_{j\to\infty} R_{F_j}(M,p_0)=+\infty.
\end{equation}
Assuming as we may that the approximation of $F_j$ by $F_{j+1}$ is sufficiently close in every step, 
condition (d) on the class $\Fscr(\cdotp,N)$ (see in particular \eqref{eq:elliptic}) implies that
the restrictions of $F_j$ to any relatively compact subset of $M^\circ$ converge
in the $\Cscr^1$ topology to an immersion, and hence $F|_{M^\circ}\in \Fscr(M^\circ,N)$ 
in view of condition (a). Completeness of the limit immersion $F|_{M^\circ}:M^\circ\to N$ 
follows from \eqref{eq:radiigrow} in view of \cite[Lemma 2.2]{AlarconForstneric2019RMI} 
which shows that the intrinsic radius $R_{F_j}(M,p_0)$ 
can decrease only a little under $\Cscr^0$-small deformations of the map. 
(This is obvious for deformations which are small in the $\Cscr^1$ norm, but the point is that it also 
holds for $\Cscr^0$-small deformations.) Alternatively, one can apply the argument in 
\cite[proof of Theorem 1.1]{AlarconDrinovecForstnericLopez2015PLMS},
which controls from below the intrinsic radii of an increasing sequence of compact 
domains in $M$ exhausting $M^\circ$, using the fact that the convergence
$F_j\to F$ is in the $\Cscr^1$ topology on each compact subset of $M^\circ$. 
There, it is also explained how to obtain injectivity of the limit map $F$ on $M$ or $bM$, provided 
the immersions $F_j$ in the sequence can be chosen injective and the uniform approximation is close
enough at each step.

The second statement is seen by \cite[proof of Theorem 1.1]{AlarconForstneric2019RMI}
where this was proved for conformal minimal immersions from Riemann surfaces to flat Euclidean spaces $\R^n$. 
Fix a point $p_0\in M_0$ and for $k\in\N$ consider the decreasing sequence of 
domains $M_k=M^\circ \setminus \bigcup_{j=1}^k D_j$. By using the Calabi--Yau property
(see in particular \eqref{eq:increase}) we construct a sequence of immersions $F_k\in \Fscr(\overline M_k,N)$, $k=1,2,\ldots$, converging uniformly on $\overline M_0 = \bigcap_{k}\overline M_k$ to a continuous map 
$F:\overline M_0\to N$ and such that 
\begin{equation}\label{eq:Mk-toinfinity}
	\lim_{k\to\infty}R_{F_k}(M_k,p_0)=+\infty.
\end{equation}
Assuming as we may that the convergence $F_k\to F$ on $\overline M_0$ is fast enough, the interior 
estimates \eqref{eq:elliptic} ensure that the sequence $F_k$ converges in the $\Cscr^1$ topology on any 
compact subset of $M_0$ to an immersion, and hence $F|_{M_0}\in \Fscr(M_0,N)$ by condition (a). 
Finally, from \eqref{eq:Mk-toinfinity} and \cite[Lemma 2.2]{AlarconForstneric2019RMI} 
it follows that $F|_{M_0}$ is a complete immersion.
\end{proof}

\begin{remark}
Since the immersions $F$ in Theorem \ref{th:abstractCY} have ranges contained in a
compact neighbourhood of the range $F_0(M)$ of the initial immersion, and since any two metrics on $N$ are
comparable on a compact set, the approximating immersions in Theorem \ref{th:abstractCY} 
are complete in any given Riemannian metric on $N$. 
\end{remark}

%
%
\begin{example} \label{ex:CY}
The following classes of manifolds and immersions are known to enjoy the Calabi--Yau 
property, and hence the conclusion of Theorem \ref{th:abstractCY} holds for them. 
\begin{enumerate}[(\rm i)]
\item $N=\R^n$ with $n\ge 3$, $M$ is a compact conformal surface with boundary
(or a compact bordered Riemann surface), and $\Fscr(M,\R^n)$ is the space
of conformal minimal immersions $M\to \R^n$.
See \cite[Theorem 1.1]{AlarconDrinovecForstnericLopez2015PLMS} for the orientable
case and \cite[Theorem 6.6]{AlarconForstnericLopezMAMS} for the nonorientable one.
Injectivity on $bM$ can be obtained for any $n\ge 3$, and injectivity on $M$ 
for any $n\ge 5$. 
\item $N=\C^n$, $M$ is a compact bordered Riemann surface, and $\Fscr(M,\C^n)$
is the space of holomorphic (or null holomorphic if $n\ge 3$) immersions;  
see \cite{AlarconForstneric2014IM}. In this case, injectivity on $M$ can be obtained for 
any $n\ge3$, and injectivity on $bM$ for any $n\ge 2$.
\item $N=\C^{2n+1}$ with  the standard complex contact structure  given by
\eqref{eq:alphast}, $M$ is a compact bordered Riemann surface,
and $\Fscr(M,\C^{2n+1})$ is the space of Legendrian immersions of class $\Cscr^1(M,\C^{2n+1})$ which 
are holomorphic on $M^\circ=M\setminus bM$ 
(see \cite[Theorem 1.2 and Lemma 6.5]{AlarconForstnericLopez2017CM}). 
The limit map can be chosen injective on $M$.
\item $(N,\xi)$ is an arbitrary complex contact manifold, $M$ is a compact smoothly bounded domain
in an open Riemann surface $\wt M$, and $\Fscr(M,N)$ is the space of holomorphic Legendrian immersions 
$F:U_F\to N$ on small open neighborhoods $U_F\subset \wt M$ of $M$.
As in the previous case, the limit map can be chosen injective on $M$.
Indeed, by \cite[Theorem 1.3]{AlarconForstneric2019IMRN}, the Calabi--Yau property is obtained from 
the standard case $N=\C^{2n+1}$ (case (iii)) by using a holomorphic Darboux neighbourhood 
of the immersed holomorphic Legendrian curve $\wt F$ (see \cite[Theorem 1.1]{AlarconForstneric2019IMRN}). 
\end{enumerate}
\end{example}

In all these examples, the Calabi--Yau condition was established by finding approximate solutions 
to the Riemann-Hilbert boundary value problem in the respective geometry, combined with the method
of exposing boundary points of compact bordered Riemann surfaces. 
The former is the most difficult part of the work, intricately depending on the geometric properties 
of manifolds and immersions. The main advantage of the Riemann-Hilbert modification method 
over other possible methods is that it provides very precise geometric control on the placement 
of the image of $M$ inside $N$, something
which was impossible by the earlier methods used in the Calabi--Yau problem for
minimal surfaces in Euclidean spaces. Most importantly, this technique allows one
to keep the source manifold $M$ and its associated structures (such as the conformal structure) 
unchanged. Sufficient conditions for the existence of injective immersions are 
obtained by proving a suitable general position theorem for a given class of immersions, 
and this typically depends on the dimensions of manifolds and other geometric conditions
associated to the given class of immersions.

The following is one of the most challenging questions in this subject.

%
%
\begin{problem}\label{prob:CY}
Let $\Fscr(\cdotp,N)$ be the class of conformal minimal immersions
from smooth, compact, bordered conformal surfaces into a smooth Riemannian manifold $(N,g)$ 
of dimension at least 3.  Does this class enjoy the Calabi--Yau property for every $(N,g)$?
\end{problem}

Although we do not see any a priori reasons against this being true, it seems that an 
(affirmative) answer is known only when $N$ is a flat Euclidean space (see Example \ref{ex:CY} (i)).
We will see in the following section that the Calabi--Yau property also holds for superminimal surfaces in 
the $4$-sphere with the spherical metric (see Theorem \ref{th:CYS4}).
After the completion of this paper, Forstneri\v c \cite{Forstneric2020JGEA} established the Calabi--Yau property
for conformal superminimal surfaces of appropriate spin in any self-dual or anti-self-dual Einstein $4$-manifold 
by using the techniques developed in this paper in the special case of the $4$-sphere. 
The key point is to use Corollary \ref{cor:CYLegendrian} together with an analogue of the 
Bryant correspondence, given by Theorem \ref{th:Bryant}, for this class of oriented Riemannian 4-manifolds.

A complex-analytic analogue of the Calabi--Yau problem is called {\em Yang's problem}, 
named after P.\ Yang \cite{Yang1977JDG} who in 1977 asked about the existence 
of complete bounded complex submanifolds in complex Euclidean spaces. 
There has been a surge of recent activity on this problem, initiated by A.\ Alarc{\'o}n and 
F.\ Forstneri\v{c} \cite{AlarconForstneric2013MA} in 2013, A.\ Alarc{\'o}n and 
F.\ J.\ L{\'o}pez \cite{AlarconLopez2016JEMS} in 2016 and, with a completely different method,
by J.\ Globevnik \cite{Globevnik2015AM} in 2015; see the survey in
\cite[pp.\ 291--292]{AlarconForstneric2019JAMS}. In some of these works --- 
see especially \cite{AlarconJDG,AlarconForstneric2019MZ,AlarconGlobevnik2017,AlarconGlobevnikLopez2019Crelle} ---  
a weaker analogue of the Calabi--Yau property was established by a different technique, using holomorphic 
automorphisms of complex Euclidean spaces to successively deform a given complex submanifold so that
it avoids more and more pieces of a certain labyrinth, thereby increasing its intrinsic radius
and making it complete in the limit. The advantage of this method, when compared to the 
Riemann-Hilbert method, is that it  preserves embeddedness, but the disadvantage is that one 
must cut away pieces of the source manifold to keep the image bounded, so one loses control of its 
complex structure.

Immersions of types (i) and (ii) in Example \ref{ex:CY} are known to satisfy the 
interpolation condition in Theorem \ref{th:abstractCY}. We now show that the 
classes (iii) and (iv) also satisfy this condition. The following is an extension of 
\cite[Lemma 4.1]{AlarconForstneric2019IMRN}.

%
%
\begin{lemma}[Calabi--Yau property with interpolation for holomorphic Legendrian immersions]
\label{lem:completeness}
Let $N$ be a complex contact manifold endowed with a Riemannian metric. Also, let $M$ be a compact 
bordered Riemann surface, $E\subset   M^\circ=M\setminus bM$ be a finite set, $p_0\in M^\circ$ be a point, 
and $F_0: M\to N$ be a holomorphic Legendrian immersion on an neighborhood of $M$ in an 
ambient Riemann surface. Given a number $\lambda>0$ (big), $F_0$ can be approximated uniformly 
on $M$ by holomorphic Legendrian immersions $F:  M\to N$ satisfying the following conditions.
\begin{enumerate}[\rm (i)]
\item $\dist_F(p_0,bM)>\lambda$. 
\item $F$ agrees with $F_0$ to  any given finite order at every point of $E$.
\end{enumerate}
Furthermore, if $F_0|_E:  E\to N$ is injective then $F:M\to N$ can be chosen an embedding.
\end{lemma}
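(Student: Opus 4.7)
The plan is to adapt the construction of \cite[Theorem 1.3]{AlarconForstneric2019IMRN} establishing the Calabi-Yau property for holomorphic Legendrian immersions into complex contact manifolds, inserting jet-interpolation conditions into each inductive step and appending a general position argument for the embedding assertion. First I would fix a smoothly bounded compact domain $M'\Subset M^\circ$ containing $E$ in its interior, chosen so that $bM'$ lies in a thin collar where the Calabi-Yau modifications will be performed; everything done from here on will leave a neighbourhood of $M\setminus M'$ fixed to high order, which in particular preserves the prescribed jets at $E$ automatically.

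Next, using the holomorphic Darboux theorem for Legendrian curves \cite[Theorem 1.1]{AlarconForstneric2019IMRN}, I would cover the image $F_0(M)$ by finitely many open contact charts identifying a neighbourhood of $F_0(M)$ in $N$ with a domain in $\C^{2n+1}$ carrying the standard contact form \eqref{eq:alphast}. In these local coordinates, the inductive Calabi-Yau step consists of a Riemann--Hilbert modification along a short boundary arc as in \cite[Theorem 1.2]{AlarconForstnericLopez2017CM}, producing a new holomorphic Legendrian immersion that is $\Cscr^0$-close to the previous one on $M$ and whose intrinsic distance from $p_0$ to the modified arc is increased by a prescribed amount. Iterating finitely many such steps over a finite covering of $bM$ by boundary arcs gives the Calabi-Yau property in the form \eqref{eq:increase}, which by Theorem \ref{th:abstractCY} yields a complete holomorphic Legendrian immersion from $M^\circ$, or indeed the stronger uniform conclusion $\dist_F(p_0,bM)>\lambda$ stated in the lemma, provided each step is sufficiently close to the previous one.

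To incorporate the interpolation condition (ii), I would multiply the holomorphic datum driving each Riemann--Hilbert modification by a fixed holomorphic function $\phi_E$ on $M$ that vanishes to the prescribed order at every point of $E$ and has no zeros in an open neighbourhood $V$ of $bM'$ containing the arcs where modifications occur; such a $\phi_E$ exists by the Weierstrass theorem on the open Riemann surface containing $M$. Since in the Euclidean Legendrian formula \eqref{eq:Fhg} one deforms through the pair $(h,g)$, the modification is naturally of the form $(h+\phi_E \xi, g+\phi_E \eta)$ for holomorphic functions $\xi,\eta$ supplied by the Riemann--Hilbert construction; this leaves the $k$-jet at every point of $E$ unchanged. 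The main technical obstacle is to verify that after insertion of this vanishing factor the period-dominating spray used in \cite[Sect.\ 4]{AlarconForstnericLopez2017CM} remains period-dominating, so that the exactness conditions of Proposition \ref{prop:exact} can still be enforced; this amounts to checking that the span of the residues and homology periods produced by the deformation $\phi_E\xi$, $\phi_E\eta$ over loops in $M'\setminus E$ is still full, and can be arranged by enlarging the number of spray parameters, as the loops used for period control can be chosen away from the zero set $E$ of $\phi_E$.

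Finally, to promote $F$ to an embedding under the assumption that $F_0|_E$ is injective, I would, at each inductive step, compose the Riemann--Hilbert modification with a small additional Legendrian deformation (supported away from $E$) obtained by a transversality argument: the double-point locus of a generic holomorphic Legendrian immersion from a Riemann surface into a complex contact manifold of dimension $2n+1\ge 3$ has negative expected dimension, so embeddedness is a generic condition that can be preserved under arbitrarily close approximation as in Corollary \ref{cor:Runge-embeddings}. Since the jets at $E$ are already fixed and $F_0|_E$ is injective, the image of the finite set $E$ remains a set of distinct points throughout the induction, and a sufficiently careful limit of embeddings converging in $\Cscr^1$ on compact subsets of $M$ yields the desired embedded Legendrian immersion $F:M\to N$.
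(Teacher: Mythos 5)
Your general framework---reduce to $\C^{2n+1}$ via a Darboux chart, force jet interpolation by multiplying the Riemann--Hilbert deformation by a function $\phi_E$ vanishing to high order at $E$, and finish with a general position argument for embeddedness---is close in spirit to the paper's proof, but the interpolation step has a genuine gap. Writing the Legendrian immersion in $\C^{2n+1}$ (with the contact form $\beta = dz + x\,dy$) as $(x,y,z)$ with $z=-\int x\,dy$, multiplying the deformation of $(x,y)$ by $\phi_E$ preserves the high-order jet of the $1$-form $x\,dy$ at each $p\in E$, and hence all derivatives of $z$ at $p$; but it does not control the value $z(p)$ itself, which is given by the \emph{global} integral $z(p_0)-\int_{p_0}^p x\,dy$ over an arc from $p_0$ to $p$. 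Along such an arc $\phi_E$ need not be small, so the deformation $(\phi_E\xi,\phi_E\eta)$ changes this integral even though every jet of $(x,y)$ at $E$ is frozen. Ensuring exactness via Proposition~\ref{prop:exact} only guarantees that $z$ is globally well defined; it says nothing about its value at any individual point of $E$. The paper's proof handles exactly this by additionally prescribing the integrals of $x\,dy$ over a fixed system of arcs joining a base point to the points of $E$, in the manner of \cite[proof of Theorem 5.1]{AlarconForstnericLopez2017CM} and \cite{AlarconCastroInfantes2019APDE}; your period-dominating spray must be enlarged so that it dominates these arc integrals in addition to the residues and loop periods.

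Two smaller points. \cite[Theorem~1.1]{AlarconForstneric2019IMRN} produces a \emph{single} holomorphic Darboux chart around the whole noncompact Legendrian curve $F_0(M)$, not a finite cover by charts; a cover would reintroduce the gluing difficulty for the Legendrian condition that the global chart is specifically designed to eliminate. And your opening setup (``leave a neighbourhood of $M\setminus M'$ fixed'') is both stated backwards (it is $M'$ that contains $E$) and in any case untenable: a Riemann--Hilbert modification is a global holomorphic deformation and cannot leave a map fixed on a subdomain. The $\phi_E$-multiplication you introduce afterwards is the correct mechanism, but the introductory paragraph contradicts it. Finally, the embedding assertion should invoke the general position theorem for holomorphic Legendrian immersions in an arbitrary complex contact manifold, \cite[Theorem 1.2]{AlarconForstneric2019IMRN}, rather than the expected-dimension heuristic or Corollary~\ref{cor:Runge-embeddings}, which concerns $\CP^3$ only.
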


%
%
\begin{proof}
The novelty with respect to \cite[Lemma 4.1]{AlarconForstneric2019IMRN} is condition {\rm (ii)}. 
When $N=\C^{2n+1}$ with the Euclidean metric, the lemma coincides
(except for condition {\rm (ii)}) with \cite[Lemma 6.5]{AlarconForstnericLopez2017CM} which
holds true for any compact bordered Riemann surface (see the discussion at the beginning of 
\cite[Sec.\ 6]{AlarconForstnericLopez2017CM}). The interpolation condition (ii)
is easily achieved by the techniques developed in \cite{AlarconForstnericLopez2017CM}.
It is the same technique which gives holomorphic immersions $(x,y):M\to \C^{2n}$
for which $xdy=\sum_{j=1}^n x_j dy_j$ is an exact $1$-form on $M$;
any such defines a Legendrian immersion $F=(x,y,z):M \to\C^{2n+1}$ with the last component
$z=-\int xdy$. To achieve the interpolation conditions, 
we arrange in addition that the immersion $(x,y)$ has correct jets at points of the given finite
set $E$ (matching those of the initially given immersion to specified orders), 
and the integral of $xdy$ has suitably prescribed values on a collection
of arcs in $M$ connecting a base point $p_0\in M$ to the points in $E$. 
The last condition, which is achieved by the methods in 
\cite[proof of Theorem 5.1]{AlarconForstnericLopez2017CM}, can be used
to ensure that the last component function $z=-\int xdy$ also has correct
values at the points of $E$; the jet intepolation condition for $z$ at the points of $E$
then follows immediately from those for $(x,y)$. For the details in a similar setting, see 
\cite{AlarconCastroInfantes2019APDE}.

This proves the lemma for $N=\C^{2n+1}$. 
It is shown in \cite[Theorem 1.1]{AlarconForstneric2019IMRN} that every complex contact manifold $N$ 
admits a holomorphic Darboux chart around any immersed noncompact holomorphic Legendrian curve. 
Using such charts, the general case of the lemma is obtained by following word for word the proof 
of \cite[Lemma 4.1]{AlarconForstneric2019IMRN}, but applying the special case of 
Lemma \ref{lem:completeness} for $N=\C^{2n+1}$ instead of \cite[Lemma 6.5]{AlarconForstnericLopez2017CM}.
\end{proof}

In view of Theorem \ref{th:abstractCY}, Lemma \ref{lem:completeness} implies the following Calabi--Yau 
type theorem for holomorphic Legendrian curves in any complex contact manifold. Except for the 
interpolation condition, the statement for finite bordered Riemann surfaces
coincides with \cite[Theorem 1.3]{AlarconForstneric2019IMRN}, while the part
for surfaces with countably many boundary curves is new.

%
%
\begin{corollary}[Calabi--Yau theorem for Legendrian curves]\label{cor:CYLegendrian}
Holomorphic Legendrian immersions from any compact bordered Riemann surface
into an arbitrary complex contact manifold satisfy the conclusion of 
Theorem \ref{th:abstractCY}.

In particular, if $M$ is an open Riemann surface of finite genus with at most countably
many ends, none of which are point ends, then $M$ admits a complete injective holomorphic 
Legendrian immersion into any complex contact manifold.
\end{corollary}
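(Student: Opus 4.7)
The plan is to deduce both statements from the abstract Calabi--Yau machine in Theorem \ref{th:abstractCY}, using Lemma \ref{lem:completeness} as the engine that supplies both the Calabi--Yau property itself and its interpolation-plus-embedding refinement. First I would verify that the class $\Fscr(\cdotp,N)$ of holomorphic Legendrian immersions into a complex contact manifold $N$, defined on compact smoothly bounded domains of ambient Riemann surfaces (with the Legendrian immersion extending holomorphically to a neighbourhood), satisfies conditions (a)--(d) of Section \ref{sec:CY}. The sheaf condition (a) is immediate because both holomorphicity and the pointwise condition $dF(TM)\subset\xi$ are local. Condition (b) follows from the existence of a holomorphic Darboux chart \cite[Theorem 1.1]{AlarconForstneric2019IMRN} around any point of $N$ combined with Bryant's formula \eqref{eq:Bfg}. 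The closedness statement (c) is standard: a $\Cscr^1$-limit on compacta of holomorphic Legendrian immersions is holomorphic, satisfies $F^*\alpha=0$ in local Darboux coordinates, and remains an immersion. Condition (d) is just the Cauchy estimate applied in local Darboux charts.

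For part (1), the Calabi--Yau property for this class is recorded as Example \ref{ex:CY}(iv), and Lemma \ref{lem:completeness} sharpens it so that the improved immersion $F$ in \eqref{eq:increase} can additionally be taken to interpolate $F_0$ to any prescribed finite jet order at a finite set $E\subset M^\circ$, and to be an embedding provided $F_0|_E$ is injective. Plugging these hypotheses into Theorem \ref{th:abstractCY} yields statement (1): every $F_0\in\Fscr(M,N)$ is uniformly approximable on $M$ by a continuous map $F:M\to N$ whose restriction to $M^\circ$ is a complete holomorphic Legendrian immersion, with jet interpolation at any finite set in $M^\circ$, and which can be made injective on $M$ after first perturbing $F_0$ by a Legendrian embedding via \cite[Theorem 1.2]{AlarconForstneric2019IMRN}.

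For part (2), I would first realise an open Riemann surface $M$ of finite genus with at most countably many ends, none of which are point ends, in the form $M\cong \wt M^\circ\setminus\bigcup_{j\in\N} D_j$, where $\wt M$ is a compact bordered Riemann surface (or, in the finite-ends case, a compact Riemann surface from which finitely many disjoint closed discs have been removed) and $\{D_j\}$ is an at most countable, pairwise disjoint family of compact, smoothly bounded domains in $\wt M^\circ$; this is exactly the representation used in \cite[Sect.\ 2]{AlarconForstneric2019RMI}, and the hypothesis that no end is a point end is what guarantees that the removed pieces are fat discs rather than single points. Next, I would produce a starting holomorphic Legendrian immersion (in fact embedding) $F_0:\wt M\to N$: if $\wt M$ is compact bordered this comes from the compact case in Example \ref{ex:CY}(iv) applied inside a Darboux chart followed by extension via Theorem \ref{th:Runge-for-immersions} and Corollary \ref{cor:Runge-embeddings}, while in the closed case one uses Bryant's embedding theorem \cite[Theorem G]{Bryant1982JDG} (after first verifying that a contact Darboux chart in $N$ suffices to realise any compact Riemann surface as a Legendrian curve in $N$, which follows by transporting Bryant's construction through the Darboux chart). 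Applying the last clause of Theorem \ref{th:abstractCY} to $F_0$ and the family $\{D_j\}$ then yields a continuous map $F:\overline{M_0}\to N$ whose restriction to $M_0\cong M$ is a complete holomorphic Legendrian immersion.

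The hard part will be retaining \emph{injectivity} in part (2) through the infinite perturbation process. In each step of the construction in Theorem \ref{th:abstractCY} one modifies a Legendrian immersion on a compact domain by a $\Cscr^0$-small perturbation coming from Lemma \ref{lem:completeness}, which locally produces embeddings; to make the limit an embedding on $M_0$ one must ensure that self-intersections of the growing exhausting immersions, including those between pieces of the domain that are far apart intrinsically but close in $N$, are avoided. I would handle this by exhausting $M_0$ by compact sub-Riemann surfaces $L_1\Subset L_2\Subset\cdots$, invoking the embedding clause of Lemma \ref{lem:completeness} together with a general-position argument (self-intersections form a closed subset of positive codimension in the relevant jet space, because the contact structure $\xi$ has rank $2n\ge 2$ in $N$ of dimension $2n+1\ge 3$) to perturb away any self-intersections of $F_k|_{L_k}$ while keeping the Calabi--Yau gain in intrinsic radius, and then arranging the approximation at step $k{+}1$ to be so tight in $\Cscr^0$ on $L_k$ that injectivity persists at the limit. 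Completeness of the limit is then secured by \cite[Lemma 2.2]{AlarconForstneric2019RMI}, as in the proof of Theorem \ref{th:abstractCY}.
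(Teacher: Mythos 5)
Your plan follows the paper's own route: combine Lemma \ref{lem:completeness} (the Calabi--Yau property with interpolation and embeddings for Legendrian immersions into an arbitrary complex contact manifold, obtained via Darboux charts) with the abstract machinery of Theorem \ref{th:abstractCY}, then reduce the second statement to the last clause of Theorem \ref{th:abstractCY} via the He--Schramm uniformisation of $M$ as a compact Riemann surface with at most countably many closed geometric discs removed. Your explicit verification of conditions (a)--(d) and your attention to carrying injectivity through the infinite exhaustion are details the paper leaves implicit and are fine to include.

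One step would fail as written. To produce a starting Legendrian immersion $F_0:\wt M\to N$ in what you call the ``closed case'', you propose to transport Bryant's embedding $\wt M\hookrightarrow\CP^3$ through a holomorphic Darboux chart of $N$. That cannot work: a Darboux chart is biholomorphic to an open subset of $\C^{2n+1}$, and the maximum principle forbids a nonconstant holomorphic map from a closed Riemann surface into any such chart; moreover, Bryant's curve in $\CP^3$ meets every hyperplane and so lies in no affine chart. Fortunately the closed case never arises. Since $M$ is open it has at least one end, and since no end is a point end, the He--Schramm realisation $M\cong R\setminus\bigcup_{j}\overline{D_j}$ (with $R$ a closed surface of the same genus) contains at least one honest removed disc $D_1$. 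Taking $\wt M = R\setminus D_1^\circ$ gives a compact \emph{bordered} Riemann surface with $M = \wt M^\circ\setminus\bigcup_{j\geq 2}\overline{D_j}$, so you are always in the bordered case, where the initial Legendrian immersion can be built inside a single Darboux chart by the methods of \cite{AlarconForstnericLopez2017CM}. With this spurious case distinction removed, and the citations for the bordered existence step corrected (Theorem \ref{th:Runge-for-immersions} and Corollary \ref{cor:Runge-embeddings} only concern the target $\CP^3$, not a general contact manifold $N$; for a general $N$ one invokes \cite{AlarconForstnericLopez2017CM} and \cite{AlarconForstneric2019IMRN} in a Darboux chart), your argument is sound and essentially identical to the paper's.
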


By the uniformisation theorem of X.\ He and O.\ Schramm \cite{HeSchramm1993},
every open Riemann surface as in the second part of the above corollary is conformally equivalent 
to a domain in a compact Riemann surface with at most countably many closed geometric discs removed.
Hence, it is of the kind as in the last statement in Theorem \ref{th:abstractCY}, so that result applies. 

We now introduce the notion of a Runge exhaustion for a given class of immersions. 

%
%
%
%
\begin{definition}\label{def:Runge}
Let $M$ be an admissible manifold for  a class of immersions $\Fscr(\cdotp,N)$.
An exhaustion $M_1\subset M_2\subset \cdots \subset \bigcup_{j=1}^\infty M_j =M$
of $M$ by compact smoothly bounded domains is an {\em $\Fscr(\cdotp,N)$-Runge exhaustion}
if for every $j\in \N$ we have $M_j\subset  M_{j+1}^\circ$ and every $F\in \Fscr(M_j,N)$ 
can be approximated in $\Cscr^1(M_j,N)$ by immersions $G\in \Fscr(M_{j+1},N)$.
The exhaustion satisfies the {\em interpolation condition} if, in addition, for every $F$ as above
the immersion $G$ can be chosen to agree with $F$ to a given finite order at a given finite set 
of points in $M_j^\circ$.
\end{definition}

Note that the definition tacitly includes the topological condition concerning extendibility of maps
from sets in the given exhaustion.

For holomorphic or (conformal) harmonic immersions from open Riemann surfaces, one typically
tries to show that any exhaustion by compact sets without holes in $M$ is a Runge exhaustion. 
This holds for instance for holomorphic immersions $M\to\C^n$, 
null holomorphic immersions $M\to\C^n$ for $n\ge 3$ \cite[Corollary 2.7]{AlarconForstneric2014IM}, 
conformal minimal immersions into $\R^n$ for any $n\ge 3$ (see \cite{AlarconLopez2012JDG} for $n=3$ 
and \cite[Theorem 5.3]{AlarconForstnericLopez2016MZ} for the general case), and 
holomorphic Legendrian immersions into complex Euclidean or complex projective spaces 
with their standard complex contact structures (see \cite{AlarconForstnericLopez2017CM} 
for $\C^{2n+1}$ and Section \ref{sec:approx-interpol} of this paper for $\CP^3$). 

We have the following Runge approximation theorem for (complete) $\Fscr$-immersions.

%
%
\begin{theorem}\label{th:complete2}
Assume that $(N,g)$ is a Riemannian manifold and $\Fscr(\cdotp,N)$ is a class of immersions into $N$.
If $M$ is an open $\Fscr$-admissible manifold which admits an $\Fscr(\cdotp,N)$-Runge exhaustion 
$(M_j)_{j\in \N}$ (see Definition \ref{def:Runge}), then every immersion $F_i\in\Fscr(M_i,N)$ with $i\in \N$ 
can be approximated in $\Cscr^1(M_i,N)$ by immersions $F\in \Fscr(M,N)$.
If in addition the class $\Fscr(\cdotp,N)$ enjoys the Calabi--Yau property (see Definition \ref{def:CY}),
then $F$ can be chosen complete. 
\end{theorem}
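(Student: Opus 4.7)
The plan is to build inductively a sequence of immersions $F_j \in \Fscr(M_j, N)$ for $j \geq i$, starting from the given $F_i$, so that the tail of the sequence becomes $\Cscr^1$-Cauchy on every fixed $M_k$. The interior estimates (d), together with the closedness condition (c) and the sheaf-type property (a), will then identify the limit as the desired $F \in \Fscr(M, N)$.

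For the first assertion, fix a summable sequence of positive tolerances $\epsilon_j$. Having constructed $F_j$, apply the defining property of an $\Fscr(\cdot,N)$-Runge exhaustion to obtain $F_{j+1} \in \Fscr(M_{j+1}, N)$ whose restriction to $M_j$ is within $\epsilon_j$ of $F_j$ in $\Cscr^1(M_j, N)$. The sequence $(F_j)_{j \geq k}$ is then $\Cscr^0$-Cauchy on $M_k$ for every $k$; invoking (d) with a slightly smaller domain inside $M_k$ upgrades this automatically to $\Cscr^1$-Cauchy convergence on compact subsets of $M$, so the limit $F: M \to N$ is a $\Cscr^1$ immersion. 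For each $k$ we have $F_j|_{M_k} \in \Fscr(M_k, N)$ by (a) for all $j \geq k$, hence $F|_{M_k} \in \Fscr(M_k, N)$ by (c), and then $F \in \Fscr(M, N)$ by the converse direction of (a). By construction $F|_{M_i}$ approximates $F_i$ as closely as prescribed.

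For the second assertion, fix $p_0 \in M_i^\circ$ and interleave a Calabi-Yau modification before each Runge step. At stage $j$, first apply Definition \ref{def:CY} to produce $\tilde F_j \in \Fscr(M_j, N)$ within $\Cscr^0$-distance $\epsilon_j/2$ of $F_j$ and with $R_{\tilde F_j}(M_j, p_0) > j$; then use the Runge exhaustion property to approximate $\tilde F_j$ on $M_j$ in $\Cscr^1$ to within $\epsilon_j/2$ by some $F_{j+1} \in \Fscr(M_{j+1}, N)$. The main obstacle is that completeness is a fragile property under $\Cscr^0$-perturbations, so one must control how much intrinsic radii can shrink in passing to the limit. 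This is precisely what \cite[Lemma 2.2]{AlarconForstneric2019RMI} supplies: sufficiently small uniform perturbations of an immersion decrease the intrinsic radius by a controllable amount. Choosing the tolerances $\epsilon_j$ small enough with respect to the geometry of each $\tilde F_j$, we arrange that $R_F(M_j, p_0) > R_{\tilde F_j}(M_j, p_0) - 1 > j - 1$ for every $j$; letting $j \to \infty$ along the exhaustion gives $R_F(M, p_0) = +\infty$, so the limit $F \in \Fscr(M, N)$ produced in the first part of the argument is in fact complete.
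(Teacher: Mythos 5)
Your proof takes essentially the same approach as the paper's: alternate the Runge exhaustion property with the Calabi-Yau modification, use the interior estimates (d) together with (a) and (c) to identify the $\Cscr^1$-limit as a member of $\Fscr(M,N)$, and invoke \cite[Lemma 2.2]{AlarconForstneric2019RMI} to ensure the intrinsic radii, made large along the exhaustion, cannot collapse in the limit. Your ordering (CY modification on $M_j$ before the Runge extension to $M_{j+1}$) is the mirror image of the paper's implicit ordering (Runge extension first, then CY on $M_{j+1}$), but the two are interchangeable; one small imprecision worth noting is that the $\delta$ in both condition (d) and Lemma 2.2 depends on the current immersion $\tilde F_j$, so the tolerances $\epsilon_{j+1},\epsilon_{j+2},\dots$ governing all later perturbations must be chosen inductively \emph{after} $\tilde F_j$ is in hand, not fixed at the outset as your opening sentence of Part 1 suggests — but you do acknowledge this dependence explicitly in Part 2, and the paper leaves the same bookkeeping to the reader.
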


\begin{proof}
Let $(M_j)_{j\in \N}$ be an $\Fscr$-Runge exhaustion of $M$ and $F_i\in\Fscr(M_i,N)$ for some $i\in\N$. 
Assume that the class $\Fscr(\cdotp,N)$ enjoys the Calabi--Yau property. 
By alternately using the Runge exhaustion property (see Definition \ref{def:Runge}) and the Calabi--Yau property
(Definition \ref{def:CY}), we find a sequence $F_j\in\Fscr(M_j,N)$ with $j\ge i$ such that for every 
$j=i,i+1,\ldots$, the restriction $F_{j+1}|_{M_j}$ approximates $F_j$ as closely as desired in $\Cscr^1(M_j,N)$ and the 
intrinsic diameter of $M_{j+1}$ with respect to $F_{j+1}$ is arbitrarily large. By doing this in the right way, 
the sequence $F_j$ converges in $\Cscr^1(M_k,N)$ for every $k\ge i$ to a complete immersion $F\in \Fscr(M,N)$ 
which approximates $F_i$ on $M_i$. If the Calabi--Yau property hold for embeddings, 
we can obtain a complete embedding $M\hra N$ in $\Fscr(M,N)$. In the absence of the Calabi--Yau property,
the above argument still holds without completeness and yields $F\in \Fscr(M,N)$
approximating $F_i$ on $M_i$.
\end{proof}

%
%
\begin{remark}\label{rem:metric}
(a) Since the Calabi--Yau property pertains to maps with 
ranges in a relatively compact neighbourhood of the range of a given map,
we see that the immersion $F\in \Fscr(M,N)$ in Theorem \ref{th:complete2}
can be chosen complete in any given metric on $N$. 

\noindent (b) In many cases of interest, it is possible to include the jet interpolation condition into 
the Runge approximation property and thereby obtain a version of Theorem \ref{th:complete2} 
with jet interpolation on infinite closed discrete subsets of $M$. 
This typically requires one to refine the exhaustion at each inductive step by adding new intermediate sets. 
\qed\end{remark}

%
%
\begin{corollary}[Complete embedded Legendrian curves]\label{cor:completeLeg}
\begin{enumerate}[\rm (i)]
\item
Every Riemann surface admits a complete injective holomorphic Legendrian 
immersion into $\CP^3$. 
\item
In Theorem \ref{th:Runge-for-immersions} (the Runge approximation theorem 
for Legendrian immersions of open Riemann surfaces into $\CP^3$), the approximating immersion 
can be chosen complete.
\item
Every open Riemann surface admits a complete injective holomorphic Legendrian 
immersion into $\CP^3$ with (everywhere) dense image. 
\item
Let $N$ be a connected complex contact manifold and $M$ be an open Riemann surface. 
Assume that every regular exhaustion of $M$ by compact domains without holes is a Runge exhaustion
with interpolation for holomorphic Legendrian immersions into $N$ (see Definition \ref{def:Runge}).
Then, there exists a complete injective holomorphic Legendrian immersion $M\to N$ with dense image.
\end{enumerate}
\end{corollary}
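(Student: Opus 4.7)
The plan is to derive all four parts from Theorem~\ref{th:complete2}, which converts an $\Fscr(\cdotp,N)$-Runge exhaustion together with the Calabi-Yau property into complete Runge approximation. For $N=\CP^3$ the Runge input is furnished by Theorem~\ref{th:Runge-for-immersions} and Corollary~\ref{cor:Runge-embeddings} (which upgrades Legendrian immersions to embeddings), while the Calabi-Yau input is furnished by Corollary~\ref{cor:CYLegendrian} and, with jet interpolation, by Lemma~\ref{lem:completeness}.

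For part (i), the compact case is Bryant's embedding theorem \cite[Theorem~G]{Bryant1982JDG}, completeness being automatic on a closed manifold. In the open case I apply Theorem~\ref{th:complete2} directly: regular exhaustions of $M$ are Runge exhaustions for Legendrian embeddings by Corollary~\ref{cor:Runge-embeddings}, and Lemma~\ref{lem:completeness} supplies the embedding-preserving Calabi-Yau property, so the limit of the inductive construction is a complete injective holomorphic Legendrian immersion $M\to\CP^3$. Part~(ii) is an immediate application of Theorem~\ref{th:complete2} with Runge input from Theorem~\ref{th:Runge-for-immersions} and Calabi-Yau input from Corollary~\ref{cor:CYLegendrian}.

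For part (iv), and hence its $N=\CP^3$ specialisation (iii), I run the same inductive scheme in the general setting. Fix an exhaustion $M_1\subset M_2\subset\cdots$ of $M$ by compact domains without holes, a countable dense set $\{p_k\}\subset M$ with $p_k\in M_k^\circ\setminus M_{k-1}$, and (using second countability of $N$) a countable dense set $\{q_k\}\subset N$. I would build inductively a sequence of holomorphic Legendrian immersions $F_k$ defined near $M_k$ satisfying:
\begin{enumerate}[\rm (a)]
\item $F_k$ approximates $F_{k-1}$ in $\Cscr^1(M_{k-1},N)$ as closely as desired;
\item $F_k(p_j)=q_j$ for $j\le k$ and $F_k$ agrees with $F_{k-1}$ to order $k$ at $p_1,\dots,p_{k-1}$;
\item the intrinsic distance $\dist_{F_k}(p_0,bM_k)$ exceeds $k$;
\item $F_k|_{M_k}$ is injective.
\end{enumerate}
The Runge-with-interpolation hypothesis provides (a) and (b); Lemma~\ref{lem:completeness} provides (c) while preserving the prescribed high-order jets and (by its final clause) respecting injectivity; (d) is then arranged by a general-position perturbation within Legendrian variations that vanish to high order at the fixed interpolation set, which is possible because $\dim_\R M=2<\dim_\R N$. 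Sufficiently tight approximations force $F_k\to F$ in $\Cscr^1$ on each compact subset of $M$, yielding a complete injective Legendrian immersion $F:M\to N$ whose image contains the dense sequence $\{q_k\}$.

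The main obstacle I anticipate is harmonising interpolation, completeness and injectivity simultaneously at each inductive stage. Interpolation alone is routine (Corollary~\ref{cor:interpolationCP3}) and completeness alone is the Calabi-Yau content; the genuine difficulty is that the Riemann-Hilbert deformation used to inflate the intrinsic radius must respect the growing list of prescribed jets, and the subsequent general-position perturbation to embeddedness must not spoil those jets. Both points are covered by the interpolation-refined statement of Lemma~\ref{lem:completeness}, which is exactly why that lemma was formulated in the strengthened form above.
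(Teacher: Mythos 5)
Your proposal follows the same route as the paper's proof: Bryant's theorem for the compact case of (i); for the open case and parts (ii)--(iv), the combination of Runge approximation for Legendrian immersions, the Calabi--Yau property with interpolation (Lemma~\ref{lem:completeness}), the general position theorem of \cite{AlarconForstneric2019IMRN}, and the inductive scheme behind Theorem~\ref{th:complete2}; and for dense image, interpolation along a discrete set mapped to a dense sequence in the target.

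One small inaccuracy to flag, though it does not affect the argument: you describe $\{p_k\}$ as ``a countable dense set $\subset M$'' while simultaneously requiring $p_k\in M_k^\circ\setminus M_{k-1}$. These requirements are incompatible -- the latter condition forces $\{p_k\}$ to meet each compact $M_j$ in only finitely many points, so $\{p_k\}$ is a \emph{closed discrete} subset of $M$ and cannot be dense. Fortunately the proof never uses density of $\{p_k\}$ in $M$; what is needed (and what the paper states) is that $\{p_k\}$ be closed and discrete so the inductive interpolation is well-posed, while the \emph{image} sequence $\{q_k\}$ is dense in $N$. Replacing ``dense'' by ``closed discrete'' in your description of $\{p_k\}$ fixes the slip and aligns exactly with the paper's formulation.
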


\begin{proof}
For a compact Riemann surface, (i) holds by Bryant's theorem \cite[Theorem G]{Bryant1982JDG}. 
For an open Riemann surface, it is seen by combining Theorem \ref{th:Runge-for-immersions} 
(the Runge approximation theorem for holomorphic Legendrian immersions into $\CP^3$), 
Lemma \ref{lem:completeness}  (the Calabi--Yau property with interpolation for Legendrian immersions),   
the general position theorem for holomorphic Legendrian immersions into an arbitrary
complex contact manifold (see \cite[Theorem 1.2]{AlarconForstneric2019IMRN}),
and the proof of Theorem \ref{th:complete2}. The same argument gives (ii).

To obtain (iii), we apply the same proof but add also the interpolation condition at finitely many points 
at every step of the inductive construction, adding more and more points of a given closed discrete subset $E$
in the open Riemann surface $M$ as we proceed. This also requires one to refine the exhaustion at 
every step of the induction. In this way, we can arrange that the resulting injective
Legendrian immersion $F:M\to\CP^3$ interpolates a prescribed injective map $E\to\CP^3$ 
with dense image, and hence the curve $F(M)$ is dense in $\CP^3$. 
See \cite[Sect.\ 4.4]{AlarconCastroInfantes2018GT} for the details.
The same arguments apply in any complex contact manifold enjoying the Runge property 
for holomorphic Legendrian immersions from open Riemann surfaces, thereby giving part (iv).
\end{proof}

%
%

\section{Superminimal surfaces in the four-dimensional sphere}\label{sec:S4}

In this section, we apply some results from Sect.\ \ref{sec:CY} to establish the Calabi--Yau property and a 
Runge approximation theorem for conformal superminimal surfaces in the $4$-sphere $\S^4$. 
The proofs are based on the Bryant correspondence given by Theorem \ref{th:Bryant}.

We begin by recalling the construction and basic properties 
of the Penrose twistor map $\pi:\CP^3\to\S^4$; see e.g.\ \cite{Penrose2018, PenroseMacCallum1973}. 
We shall follow Bryant's paper \cite[Sect.\ 1]{Bryant1982JDG}, but the reader may also wish to consult
J.\ Bolton and L.\ M.\ Woodward \cite{BoltonWoodward2000} and J.\ C.\ Wood \cite{Wood1987}.
A self-contained exposition can also be found in \cite[Sect.\ 6]{Forstneric2020JGEA}
where additional references are provided.

Let $\H$ denote the algebra of quaternions. An element of $\H$ can be written uniquely as
\[
	q= x+\igot y +\jgot u +\kgot v = z+\jgot w,\quad \text{where}\ z=x+\igot y\in\C\ \text{and}\ w=u-\igot v\in \C.
\] 
Here, $\igot,\jgot,\kgot$ are the quaternionic units.
In this way, we identify $\H$ with $\C^2$ and the quaternionic plane $\H^2=\H\times \H$ with $\C^4$.
Write $\H^2_*=\H^2\setminus \{0\}$. Consider the following diagram:
\[ 
	\xymatrix{
	\C^* \,\, \ar@{^{(}->}[r] & \H^2_*=\C^4_* \ar[d]^{\phi}  \\ 
	\CP^1 \,\, \ar@{^{(}->}[r] & \CP^3 \ar[d]^{\pi} \\
	& \H\P^1=\S^4
	} 
\]
The map $\phi:\C^4_*\to \CP^3$ is the standard quotient projection. 
The map $\pi\circ \phi:\H^2_* \to \H\P^1$ associates to each quaternionic
line  $H \subset \H^2$ the corresponding point in the quaternionic $1$-dimensional projective space
$\H\P^1$, which is the $4$-sphere. Each complex line $\Lambda\subset \C^4=\H^2$ spans the unique
quaternionic line $H=\Lambda\oplus \jgot \Lambda  \subset \H^2$, and the space of all complex lines 
within a given quaternionic line (which may be identified with $\C^2$) is clearly parameterised by $\CP^1$.
This observation defines a real analytic fibre bundle $\pi:\CP^3\to \S^4$ with fibre $\CP^1$, 
called the {\em twistor map} or the {\em twistor projection}. The fibres of $\pi$ are projective lines 
$\CP^1\subset \CP^3$. We endow $\CP^3$ with the Fubini-Study metric and $\S^4$ with the spherical metric.
 
As shown by Bryant \cite[Theorem A]{Bryant1982JDG}, the complex hyperplane distribution
$\xi\subset T\CP^3$, where for every $p\in\CP^3$ the hyperplane $\xi_p$ is the orthogonal
complement of the tangent space at $p$ to the fibre $\pi^{-1}(\pi(p))$ in the Fubini-Study metric, 
is a holomorphic contact bundle given in suitable homogeneous coordinates by \eqref{eq:alpha0}.
Furthermore, the differential $d\pi_p:\xi_p \to T_{\pi(p)}\S^4$ for $p\in\CP^3$ is an isometry 
in the Fubini-Study metric on $\CP^3$ and the spherical metric on $\S^4$.

Among all minimal surfaces in $\S^4$ (and, more generally, in any smooth Riemannian $4$-manifold), 
there is a natural and important subclass consisting of {\em superminimal surfaces}. 
This term was introduced in 1982 by R.\ Bryant \cite{Bryant1982JDG},
although such surfaces had been studied much earlier. In particular, Bryant mentions several
works by E.\ Calabi and S.\ S.\ Chern from the period 1967--70 in which the authors 
exploited the fact that every minimal immersion of the 2-sphere into a higher-dimensional sphere is superminimal.  
A minimal immersion from a Riemann surface $M$ into $\mathbb S^4$ is superminimal if and only if a certain 
holomorphic quartic form on $M$ vanishes identically \cite[p.\ 466]{Bryant1982JDG} 
(on the 2-sphere $M=\S^2$ it always does).

We now recall a geometric characterization of superminimal surfaces in any smooth Riemannian $4$-manifold $(N,g)$,
due to T.\ Friedrich \cite{Friedrich1984,Friedrich1997} who pointed out that this class 
of minimal surfaces was first described by K.\ Kommerell in his 1897 dissertation. 
See also the brief historical survey in \cite{Forstneric2020JGEA}.

Assume that $M\subset N$ is a smooth embedded surface with the induced 
conformal structure in the Riemannian manifold $(N,g)$. 
(Our considerations will be of local nature, so they also apply to immersed surfaces.) 
Then, $TN|_M=TM\oplus \nu$ where $\nu$ is the orthogonal normal bundle of $M$ in $N$. 
A unit normal vector $n\in \nu_x$ at a point $x\in M$ determines a
{\em second fundamental form} $S_x(n):T_xM\to T_xM$, a self-adjoint linear operator. 
The surface $M$ is said to be {\em superminimal} if for every point $x\in M$ and tangent vector 
$0\ne v\in T_xM$, the curve 
\begin{equation}\label{eq:Ixv}
	I_x(v)=\bigl\{S_x(n)v: n\in \nu_x,\ |n|_g=1\bigr\} \subset T_xM
\end{equation}
is a circle centred at $0\in T_x M$, possibly reducing to the origin.  

If the surface $M$ and the ambient $4$-manifold $(N,g)$ are both oriented (so $M$ with this
orientation and the induced conformal structure is a Riemann surface), 
then one defines superminimal surfaces $M\subset N$ of positive or negative {\em spin} as follows. 
We coorient the normal bundle $\nu$ so that the orientations on $TM$ and $\nu$ add up 
to the orientation on $TN|_M=TM\oplus\nu$.
For $x\in M$ denote by $C_x$ the positively oriented unit circle in the oriented normal space $\nu_x$.
The nontrivial circles $I_x(v)$ of \eqref{eq:Ixv} are also positively oriented with respect to the orientation on $T_xM$. 
A superminimal surface $M\subset N$ is of {\em positive spin} if for
every point $x\in M$ and vector $0\ne v\ni T_xM$, the map $C_x\ni n \to S(n)v \in I_x(v)\subset T_xM$ 
is orientation preserving, and is of {\em negative spin} if this map is orientation-reversing. 
(This condition is irrelevant at points $x\in M$ where the circle $I_x(v)$ of \eqref{eq:Ixv} reduces to $0\in T_xM$.)
We denote by  $S_\pm(M,N)$ the spaces of conformal superminimal immersions of positive
or negative spin, respectively. Clearly, the spin gets reversed if we reverse the orientation on $N$.
(However, changing the orientation on $M$ also changes the coorientation on the normal bundle $\nu$, 
and hence the spin does not change.) In particular, the postcomposition by the antipodal map 
$x\mapsto -x$ on $\S^4$ (which is an orientation-reversing isometry) interchanges the spaces $S_\pm(M,\S^4)$.
For this reason, it suffices to consider superminimal surfaces in $\S^4$ of positive spin.

The following result is called the {\em Bryant correspondence} 
(see \cite[Theorems B, B', D]{Bryant1982JDG}). A generalization to more general
Riemannian $4$-manifolds is due to T.\ Friedrich \cite[Proposition 4]{Friedrich1984}; 
see also the summary statement \cite[Theorem 4.6]{Forstneric2020JGEA}.

%
%
\begin{theorem}[Bryant \cite{Bryant1982JDG}]\label{th:Bryant}
Let $\pi:\CP^3\to \S^4$ be the Penrose twistor bundle with the horizontal holomorphic contact 
subbundle $\xi\subset T\CP^3$ (orthogonal to the fibres of $\pi$). 
If $M$ is a Riemann surface and $X:M\to\CP^3$ is a holomorphic 
Legendrian immersion, then $\pi\circ X:M\to \S^4$ is a conformal superminimal immersion of positive spin.
Conversely, every conformal superminimal immersion $M\to \S^4$ of positive spin lifts to a unique
holomorphic Legendrian immersion $M\to \CP^3$.
\end{theorem}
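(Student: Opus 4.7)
The plan is to treat the two directions of the correspondence separately, both hinging on the isometric identification $d\pi|_\xi\colon \xi\to T\S^4$ and the compatibility of the complex structures under this identification.

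For the forward direction, consider a holomorphic Legendrian immersion $X\colon M\to\CP^3$ and set $Y=\pi\circ X$. Since $X$ takes values in the holomorphic subbundle $\xi$, its differential $dX_p\colon T_pM\to \xi_{X(p)}$ is $\C$-linear, so $dX(Jv)=\imath\cdot dX(v)$, where $J$ is the complex structure on $T_pM$ and $\imath$ denotes multiplication by the complex structure on the fibre of $\xi$. With respect to the Fubini--Study Hermitian metric, a real vector $w$ and $\imath w$ are orthogonal and of equal length in the underlying real inner product space, and since $d\pi|_\xi$ is a real isometry, $dY(v)$ and $dY(Jv)$ are orthogonal and of equal length in $T_{Y(p)}\S^4$, which is conformality. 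To establish superminimality of positive spin, I would compute the second fundamental form $h$ of $Y$ at $p$ in a frame adapted to the twistor geometry: take an orthonormal frame of the vertical tangent space $T_{X(p)}\pi^{-1}(Y(p))$ (a real $2$-plane with natural complex structure) and transport it suitably to an orthonormal frame of the normal plane $\nu_{Y(p)}$. The horizontality of $X$ together with the $\C$-linearity of $dX$ then forces the trace-free symmetric forms $h^3,h^4$ associated to this normal frame to be related by a quarter-turn with the correct orientation, which is precisely superminimality of positive spin.

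For the converse direction, given a conformal superminimal immersion $Y\colon M\to\S^4$ of positive spin, I would construct the Legendrian lift $X\colon M\to\CP^3$ pointwise via the classical identification of each twistor fibre $\pi^{-1}(y)\cong\CP^1$ with the space of orthogonal almost complex structures on $T_y\S^4$ compatible with the orientation. The conformal structure on $M$ and the differential $dY$ endow the oriented tangent plane $dY(T_pM)\subset T_{Y(p)}\S^4$ with a complex structure; there are exactly two compatible almost complex structures on $T_{Y(p)}\S^4$ extending it, differing by the induced orientation on the normal plane $\nu_{Y(p)}$. Define $X(p)\in\pi^{-1}(Y(p))$ to be the extension whose orientation on $\nu_{Y(p)}$ agrees with the coorientation entering the positive-spin convention. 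The resulting $X$ is automatically horizontal, hence Legendrian, since $dX$ is forced to be the horizontal lift of $dY$ (any vertical component would vanish under $d\pi$, contradicting $\pi\circ X=Y$ at the level of differentials).

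The main obstacle and technical heart of the proof is verifying that the lift $X$ so defined is actually \emph{holomorphic}. I would work in a local twistor chart, combining a Darboux-type normalisation of $\xi$ with the standard local description of the $\CP^1$-fibration $\pi$, to translate the Cauchy--Riemann equation for $X$ into a system of PDEs on $Y$ and the chosen normal frame. A direct computation then reduces these PDEs to the superminimality of $Y$ of positive spin; the opposite spin choice yields a lift that is holomorphic with respect to the conjugate almost complex structure on the twistor space rather than the one on $\CP^3$. Uniqueness follows from the identity principle: any two holomorphic Legendrian lifts of $Y$ agree at one point (only the positive-spin fibre point admits a holomorphic neighbourhood of the lift) and hence everywhere. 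Rather than reproduce this lengthy classical computation, I would appeal to the original proof in Bryant \cite{Bryant1982JDG} together with the cleaner exposition by Friedrich \cite{Friedrich1984,Friedrich1997}.
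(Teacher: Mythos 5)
The paper does not prove this statement: it records Bryant's theorem with citations to \cite{Bryant1982JDG} for the original proof, to \cite{Friedrich1984,Friedrich1997} for the cleaner geometric description, and to \cite{Forstneric2020JGEA} for a summary with explicit lift formulas. Your proposal sketches exactly the standard twistor-theoretic argument that these references carry out---conformality from $\C$-linearity of $dX$ together with the real isometry $d\pi|_\xi$; superminimality of positive spin from a second-fundamental-form computation in a frame adapted to the horizontal/vertical splitting; and the converse via the identification of the fibre $\pi^{-1}(y)$ with oriented orthogonal complex structures on $T_y\S^4$, using the positive-spin convention to select the extension to the normal plane. Like the paper, you ultimately defer the technical verification of holomorphicity of the lift to Bryant and Friedrich, so the treatment is consistent with what the paper does.

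One point worth tightening is your uniqueness argument. Appealing to the identity principle is both unnecessary and not really supported by the parenthetical assertion that precedes it. Uniqueness is in fact immediate from a pointwise argument: any Legendrian lift $X$ of $Y$ is horizontal, so $dX$ must be the horizontal lift of $dY$; if $X$ is also holomorphic, then the almost complex structure on $T_{Y(p)}\S^4$ represented by the fibre point $X(p)$ must restrict to the complex structure on $dY(T_pM)$ induced by the conformal structure of $M$. This leaves exactly two candidates for $X(p)$, corresponding to the two orientations of the normal plane, and the positive-spin requirement singles out one. Hence $X$ is determined at every point, and uniqueness does not require analytic continuation from a single agreement point. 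This reformulation also makes plain why there is precisely one holomorphic and one antiholomorphic Legendrian lift, as the paper remarks after the statement of the theorem.
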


Explicit formulas for the lifting can be found in \cite[Sect.\ 1]{Bryant1982JDG}, and a simpler
geometric description is given by T.\ Friedrich \cite{Friedrich1984}; see also \cite[Equation (4.3)]{Forstneric2020JGEA}. 
Uniqueness of a holomorphic Legendrian lifting of a superminimal surface is 
intimately related to the nonintegrability of the contact structure $\xi$.
In fact, every superminimal surface of positive spin in $\S^4$ admits precisely two Legendrian liftings in $\CP^3$, a
holomorphic and an antiholomorphic one, and these two liftings get interchanged by the antiholomorphic
involution $\iota:\CP^3\to \CP^3$,
\[
	\iota([z_1:z_2:z_3:z_4]) =  [-\bar z_2:\bar z_1:-\bar z_4:\bar z_3], 
\]
which preserves the fibres of $\pi:\CP^3\to \S^2$.  (See \cite[Sect.\ 6]{Forstneric2020JGEA} for more details.) 

By Theorem \ref{th:Bryant}, postcomposition by the twistor projection $\pi:\CP^3\to \S^4$ defines a homeomorphism 
\[
	\pi_*: \Lscr(M,\CP^3) \lra S_+(M,\S^4)
\] 
from the space of holomorphic Legendrian immersions $M\to\CP^3$ onto the space of 
superminimal immersions $M\to\S^4$ of positive spin, both endowed with the compact-open topology. 

We now describe some applications of the results on holomorphic Legendrian immersions into $\CP^3$, 
obtained in the previous sections, to superminimal surfaces in $\S^4$. 

%
%
\begin{corollary}[Runge approximation theorem for superminimal surfaces in $\S^4$]   \label{cor:RungeS4}
Let $M$ be a Riemann surface, either open or compact, and let $K$ be a compact subset of $M$.  
Every conformal superminimal immersion of positive spin from a neighbourhood of $K$ to $\S^4$ 
can be approximated uniformly on $K$ by complete superminimal immersions $Y:M\to\S^4$ of positive
spin. Furthermore, we may choose $Y$ to agree with $X$ to a given finite order at each point of a 
given finite subset of $K$. 
In particular, every Riemann surface immerses into the $4$-sphere as a complete conformal superminimal surface
of positive spin.
\end{corollary}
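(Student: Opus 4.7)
The strategy is to transfer the problem from $\S^4$ to $\CP^3$ via the Bryant correspondence (Theorem \ref{th:Bryant}), apply the Legendrian Runge approximation theorem (with completeness added when $M$ is open), and push the resulting approximant back down through the twistor projection $\pi$. The key geometric fact making this transfer faithful is recalled in the introduction: $d\pi$ restricts to a fibrewise isometry of the contact subbundle $\xi\subset T\CP^3$ onto $T\S^4$.

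Concretely, I would first lift the given conformal superminimal immersion $X:U\to\S^4$ of positive spin, defined on a neighbourhood $U$ of $K$, to the unique holomorphic Legendrian immersion $\tilde X:U\to\CP^3$ with $\pi\circ\tilde X=X$ (Theorem \ref{th:Bryant}). Next I would apply Theorem \ref{th:Runge-for-immersions} to approximate $\tilde X$ uniformly on $K$ by a holomorphic Legendrian immersion $\tilde Y:M\to\CP^3$ that agrees with $\tilde X$ to the prescribed finite order at each point of the specified finite subset of $K$; when $M$ is open, I would invoke Corollary \ref{cor:completeLeg}(ii) to ensure in addition that $\tilde Y$ is complete. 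Setting $Y=\pi\circ\tilde Y$, Theorem \ref{th:Bryant} immediately yields that $Y:M\to\S^4$ is a conformal superminimal immersion of positive spin. Both uniform approximation on $K$ and the jet interpolation at the finite subset pass from $\tilde Y$ to $Y$ because $\pi$ is smooth, and therefore uniformly continuous on a compact neighbourhood of $\tilde X(K)$.

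The only nonroutine step is the transfer of completeness under $\pi$, and this is almost immediate. Since $\tilde Y$ is Legendrian, $d\tilde Y$ takes values in $\xi$, and since $d\pi|_\xi$ is a pointwise isometry onto $T\S^4$, the pullback metrics $\tilde Y^*g_{\mathrm{FS}}$ and $Y^*g_{\S^4}$ coincide as Riemannian metrics on $M$. Consequently the two induced distance functions on $M$ are identical, and completeness of $\tilde Y$ forces completeness of $Y$. (When $M$ is compact, completeness of $Y$ is in any case automatic.)

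The final ``in particular'' statement splits into two cases. For a compact Riemann surface $M$, I would invoke Bryant's Legendrian embedding theorem \cite[Theorem G]{Bryant1982JDG} and compose with $\pi$, completeness being automatic by compactness. For an open Riemann surface $M$, I would apply Corollary \ref{cor:completeLeg}(i) to obtain a complete injective holomorphic Legendrian immersion $M\to\CP^3$, and then compose with $\pi$, completeness transferring by the isometric argument just described. I do not anticipate a genuine obstacle: essentially all the heavy lifting has already been carried out in the preceding sections, and the sole geometric point requiring care is the completeness transfer, which is a one-line consequence of the isometric property of the twistor projection on the contact bundle.
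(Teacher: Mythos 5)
Your proposal is correct and follows the same route as the paper's proof: lift via the Bryant correspondence, apply Theorem \ref{th:Runge-for-immersions} together with Corollary \ref{cor:completeLeg} to obtain a complete approximating Legendrian immersion with jet interpolation, and project back by $\pi$, with completeness transferring because $d\pi$ is an isometry on $\xi$. The only additions are harmless elaborations of points the paper leaves implicit (the jet transfer through the smooth map $\pi$, and the case split in the final ``in particular'' assertion).
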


Since the antipodal map $x\mapsto -x$ on $\S^4$ interchanges the spaces $S_\pm(M,\S^4)$, the
corresponding result also holds for superminimal surfaces of negative spin in $\S^4$.

\begin{proof}
Let $X: U\to\S^4$ be a superminimal immersion of positive spin from a neighbourhood $U\subset M$ of $K$.
Fix a number $\epsilon>0$, a finite set $E\subset K$, and an integer $k\in\N$. 
By Theorem \ref{th:Bryant}, $X$ lifts to a holomorphic Legendrian immersion 
$F: U\to \CP^3$, i.e., $X=\pi\circ F$. By Theorem \ref{th:Runge-for-immersions} and
Corollary \ref{cor:completeLeg} we can approximate $F$ uniformly on $K$ by 
complete holomorphic Legendrian immersions $G: M\to\CP^3$ agreeing with $F$ to order $k$ at each point of 
$E$. (If $M$ is compact then every immersion from it is complete; the main point here concerns
open Riemann surfaces.) The projection $Y:=\pi\circ G: M\to\S^4$ is then a superminimal immersion 
(see Theorem \ref{th:Bryant}) that approximates $X$ on $K$ and agrees with $X$ to order $k$ 
at each point of $E$. Since $G$ is complete and the twistor projection is an isometry from the 
contact subbundle $\xi\subset T\CP^3$ onto $T\S^4$, $Y$ is complete as well. 
\end{proof}

Similarly one proves the following interpolation theorem. 

%
%
\begin{corollary}[Weierstrass interpolation theorem for superminimal surfaces in $\S^4$]  
\label{cor:WeierstrassS4}
Let $M$ be a Riemann surface, open or compact, and let $E$ be a closed discrete subset of $M$.  
Every map $E\to\S^4$ extends to a  complete superminimal immersion $M\to\S^4$ of positive spin.
\end{corollary}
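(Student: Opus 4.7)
The plan is to reduce to a complete-interpolation statement for holomorphic Legendrian curves in $\CP^3$ via the Bryant correspondence, and then project down by the twistor map. Given $\phi:E\to\S^4$, the first step is to choose a set-theoretic lift $\tilde\phi:E\to\CP^3$ with $\pi\circ\tilde\phi=\phi$. Since $\pi:\CP^3\to\S^4$ is surjective with $\CP^1$ fibres and $E$ is discrete, this amounts to a pointwise choice of preimage and carries no continuity constraint.

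Next, I would produce a complete holomorphic Legendrian immersion $F:M\to\CP^3$ with $F|_E=\tilde\phi$. When $M$ is compact, $E$ is finite and Corollary \ref{cor:interpolationCP3} directly furnishes a holomorphic Legendrian immersion extending $\tilde\phi$; completeness is automatic by compactness of $M$. When $M$ is open, I would run the inductive construction from the proof of Corollary \ref{cor:completeLeg}(iii), but anchored on $\tilde\phi$ rather than on a dense map. Exhaust $M$ by compact domains without holes $K_1\subset K_2\subset\cdots$, enumerated compatibly with $E$, and build $F_j:K_j\to\CP^3$ inductively by alternating two operations: Theorem \ref{th:Runge-for-immersions} to pass from $K_j$ to $K_{j+1}$ with $\Cscr^1$-close approximation and jet interpolation of $\tilde\phi$ at the new points of $E\cap K_{j+1}$, and Lemma \ref{lem:completeness} to perturb so that $R_{F_{j+1}}(K_{j+1},p_0)>j$ while preserving the prescribed jets of $\tilde\phi$ on $E\cap K_{j+1}$. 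Provided the approximations are sufficiently close at each stage, the sequence $F_j$ converges in $\Cscr^1$ on compacta to a complete holomorphic Legendrian immersion $F:M\to\CP^3$ with $F|_E=\tilde\phi$.

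The final step is to set $Y=\pi\circ F:M\to\S^4$. Theorem \ref{th:Bryant} then guarantees that $Y$ is a conformal superminimal immersion of positive spin, and by construction $Y(p)=\pi(\tilde\phi(p))=\phi(p)$ for every $p\in E$. Completeness transfers from $F$ to $Y$ because $d\pi$ restricts to an isometry $\xi\to T\S^4$ between the Fubini--Study and spherical metrics and $dF$ takes values in $\xi$ by the Legendrian condition, so $Y^*g_{\S^4}=F^*g_{\mathrm{FS}}$ on $M$.

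The main obstacle I expect is keeping the interpolation data $\tilde\phi|_E$ intact while repeatedly perturbing $F_j$ to inflate the intrinsic radius. This is handled by the jet-interpolation clauses already built into both Theorem \ref{th:Runge-for-immersions} and Lemma \ref{lem:completeness}: at stage $j$ one interpolates $\tilde\phi$ at each point of $E\cap K_j$ not merely in value but up to an order $k_j$ that is allowed to grow with $j$, so that all later perturbations, being higher-order corrections at those points, preserve the prescribed values exactly. This is precisely the bookkeeping used in the proof of Corollary \ref{cor:completeLeg}(iii) to interpolate a dense map on a discrete set while keeping the limit Legendrian immersion complete and injective.
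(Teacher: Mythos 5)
Your proof is correct and follows essentially the same route the paper takes: lift the prescribed boundary map $E\to\S^4$ pointwise through the twistor fibration to a map $E\to\CP^3$, extend it to a complete holomorphic Legendrian immersion $M\to\CP^3$ via Corollary~\ref{cor:interpolationCP3} combined (for open $M$) with the inductive Runge-plus-Calabi--Yau scheme of Corollary~\ref{cor:completeLeg}(iii), and then project by $\pi$ using Theorem~\ref{th:Bryant} and the isometry $d\pi|_{\xi}$. The paper merely records this as ``similarly one proves,'' and your write-up supplies exactly the intended details.
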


From the Calabi--Yau theorem for Legendrian immersions to $\CP^3$
(see Corollary \ref{cor:CYLegendrian}) and the fact that $d\pi$ is an isometry on the 
contact subbundle $\xi\subset T\CP^3$ we infer the following.

%
%
\begin{theorem}[Calabi--Yau theorem for conformal superminimal surfaces in $\S^4$]\label{th:CYS4}
If $M$ is a compact bordered Riemann surface and $X:M\to\S^4$ is a superminimal immersion of
positive spin (defined on a neighbourhood of $M$ in an ambient Riemann surface), then $X$ can be 
approximated as closely as desired uniformly on $M$ by a continuous map 
$Y:M\to\S^4$ whose restriction to the interior $M^\circ=M\setminus bM$ is a complete, generically 
injective superminimal immersion of positive spin, and whose restriction to the boundary $bM$ is a 
topological embedding. In particular, $Y(bM)\subset\S^4$ is a union of pairwise disjoint Jordan curves. 
The analogous result holds for bordered surfaces with countably many boundary curves and 
without point ends (see the last part of Theorem \ref{th:abstractCY} for the precise statement).
\end{theorem}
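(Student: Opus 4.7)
The plan is to transfer the problem to holomorphic Legendrian immersions in $\CP^3$ via the Bryant correspondence (Theorem \ref{th:Bryant}), solve it there using the Calabi-Yau theorem for Legendrian curves (Corollary \ref{cor:CYLegendrian}), and then project the result back to $\S^4$ via the twistor map $\pi$. First, I would lift $X$ to its unique holomorphic Legendrian companion $F:M\to\CP^3$ satisfying $\pi\circ F=X$; this exists because $X$ has positive spin and is defined on a neighbourhood of $M$ in an ambient Riemann surface, so $F$ is defined on the same neighbourhood.

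Next, I would apply Corollary \ref{cor:CYLegendrian} to $F$. Since $M$ is a compact bordered Riemann surface, this produces a continuous map $G:M\to\CP^3$, uniformly as close to $F$ on $M$ as desired, whose restriction to $M^\circ$ is a complete injective holomorphic Legendrian immersion; the injectivity comes from Lemma \ref{lem:completeness} and is preserved under the inductive construction of Theorem \ref{th:abstractCY}. Setting $Y:=\pi\circ G:M\to\S^4$, continuity of $\pi$ makes $Y$ continuous, and Bryant's Theorem \ref{th:Bryant} identifies $Y|_{M^\circ}$ with a conformal superminimal immersion of positive spin. Completeness transfers for free: since $G$ is Legendrian, $dG$ takes values in the contact bundle $\xi\subset T\CP^3$, on which $d\pi$ is an isometry onto $T\S^4$; hence the pullback of the spherical metric by $Y|_{M^\circ}$ agrees with the pullback of the Fubini-Study metric by $G|_{M^\circ}$, so completeness of the latter transfers to the former.

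The remaining conditions, generic injectivity of $Y$ on $M^\circ$ and the topological embedding of $Y|_{bM}$, do not follow automatically from embeddedness of $G$ in $\CP^3$, because the twistor fibres $\CP^1\subset\CP^3$ are real two-dimensional, so $G(M)$ could a priori meet one fibre in several points. To overcome this, I would interleave general-position perturbations into the inductive construction producing $G$. At each stage $j$ the Legendrian immersion $G_j$ can, via Theorem \ref{th:Runge-for-immersions}, be perturbed within the class of Legendrian immersions so that $\pi\circ G_j$ has only isolated transverse double points in $M^\circ$ and is embedded on $bM$; by the standard dimension count (a smooth surface generically has isolated double points in a $4$-manifold, and distinct points of the one-dimensional compact manifold $bM$ generically land on distinct twistor fibres), these are open and dense conditions. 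If the subsequent perturbations are kept sufficiently small in $\Cscr^1$ on each previously constructed compact subdomain of $M^\circ$ and in $\Cscr^0$ on $M$, both properties persist in the limit.

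The main obstacle I anticipate is precisely this coordination. The Calabi-Yau iteration produces perturbations that are necessarily only $\Cscr^0$-small near $bM$, so preserving the boundary embedding requires that each new perturbation be smaller than the current uniform injectivity modulus of $\pi\circ G_j|_{bM}$ --- a condition one can meet by choosing the sequence of tolerances quickly enough. For generic injectivity on $M^\circ$ one exploits the interior $\Cscr^1$ estimates for holomorphic maps (condition (d) of the framework in Section \ref{sec:CY}) to ensure that the limit $\pi\circ G|_{M^\circ}$ is a smooth immersion whose self-intersection set is of real codimension at least two, hence nowhere dense. The version for surfaces with countably many boundary curves and no point ends follows by applying the last part of Theorem \ref{th:abstractCY} analogously, using the countable-boundary Calabi-Yau statement already contained in Corollary \ref{cor:CYLegendrian}.
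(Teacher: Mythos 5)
Your proposal follows the same route as the paper: lift $X$ to its holomorphic Legendrian companion $F$ via the Bryant correspondence, apply the Calabi--Yau theorem for Legendrian curves (Corollary \ref{cor:CYLegendrian}), project back by the twistor map, and use the isometry $d\pi|_\xi$ to transfer completeness. You also correctly identify the key subtlety --- that embeddedness upstairs in $\CP^3$ does not yield the required injectivity of $Y=\pi\circ G$, since the twistor fibres have real dimension two --- and you reach for the right remedy: interleave general-position perturbations of the Legendrian immersions during the Calabi--Yau iteration, and conclude by the dimension count on the difference map (target $\R^4$, source $M\times M$ of real dimension $4$ for isolated double points in the interior, and $bM\times bM$ of real dimension $2$ for injectivity on the boundary).

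The one step where you are less precise than the paper is where the transversality within the constrained class of Legendrian immersions actually comes from. Appealing to Theorem \ref{th:Runge-for-immersions} gives approximation, not genericity; ``open and dense'' is the conclusion one wants, but it is not automatic because Legendrian perturbations are not arbitrary. The paper gets this by forming the difference map $\delta X(x,x')=\pi\circ X(x)-\pi\circ X(x')$ in Euclidean coordinates on $\S^4\setminus\{p\}\cong\R^4$ (for $p$ off the image), and then invoking a specific general position lemma for Legendrian immersions, namely the argument of \cite[Lemma~4.4]{AlarconForstnericLopez2017CM}, transferred to the projective setting via the holomorphic Darboux neighbourhood theorem \cite[Theorem~1.1]{AlarconForstneric2019IMRN}. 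If you replace your invocation of Theorem \ref{th:Runge-for-immersions} by this general position lemma, the proof coincides with the paper's.
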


\begin{proof}
This is seen by  the same argument as in the proof of Corollary \ref{cor:RungeS4};
however, we must justify the statement that $Y$ can be chosen generically injective
on $M$ and injective on $bM$. To this end, it suffices to show that at every step of the inductive 
construction, the Legendrian immersion $X_j:M\to\CP^3$ can be chosen such that 
the superminimal immersion $Y_j:=\pi\circ X_j:M\to\S^4$ is generically injective
on $M$ and injective on $bM$. If we approximate sufficiently closely at every step, 
then the limit map $Y=\lim_{j\to\infty}Y_j:M\to\S^4$ will enjoy the same properties.
(For the details in a similar setting, see \cite[proof of Theorem 1.1]{AlarconDrinovecForstnericLopez2015PLMS}.)

Let $X_0:M\to\CP^3$ be a holomorphic Legendrian immersion. We claim that there is an 
arbitrarily $\Cscr^1$ small holomorphic Legendrian perturbation $X$ of $X_0$ such that
$\pi\circ X:M\to \S^4$ is generically injective and $\pi\circ X:bM\to \S^4$ is injective; 
this will complete the proof. 

Pick a point $p\in \S^4$ which does not lie on the surface $\pi\circ X_0(M)\subset\S^4$ 
and choose Euclidean coordinates on $\S^4\setminus \{p\}=\R^4$. 
We associate to any map $X:M\to\CP^3$ uniformly close to $X_0$ the difference map
 $\delta X:M\times M\to \R^4$ defined by
\begin{equation}\label{eq:delta}
	\delta X(x,x')=\pi\circ X(x)-\pi\circ X(x') \in \R^4 \quad\text{for}\  x,x'\in M.
\end{equation}
Since $X_0$ is a Legendrian immersion, the map $\pi\circ X_0:M\to\S^4$ is an immersion
by Theorem \ref{th:Bryant}, and hence there is an open neighbourhood $U\subset M\times M$ of the diagonal 
$\Delta:=\{(x,x):x\in M\}$ such that $\delta X_0$ does not assume the value $0\in\R^4$ 
on $\overline U\setminus \Delta$. The same is then true for all maps sufficiently close to $X_0$ in 
$\Cscr^1(M,\CP^3)$. By the general position argument in 
\cite[proof of Lemma 4.4]{AlarconForstnericLopez2017CM}, 
a generic holomorphic Legendrian immersion $X:M\to\CP^3$ close to
$X_0$ in $\Cscr^1(M,\CP^3)$ is such that the difference map
$\delta X:M\times M\to \R^4$, and also its restriction $\delta X:bM\times bM\to \R^4$,
are transverse to the origin $0\in\R^4$ on $M\times M\setminus U$
and $bM\times bM\setminus U$, respectively. 
(The argument in \cite[Lemma 4.4]{AlarconForstnericLopez2017CM} is written for the standard contact
structure on $\CP^3$, but it applies in any complex contact manifold in view of the
Darboux neighbourhood theorem \cite[Theorem 1.1]{AlarconForstneric2019IMRN}.
Compare with \cite[proof of Theorem 1.2]{AlarconForstneric2019IMRN}.) 
Assume that $X$ is such. Since $\dim bM\times bM=2<4$, it follows that $\delta X$ does not assume 
the value $0\in\R^4$ on $bM\times bM\setminus \Delta$, which means that $\pi\circ X$ is injective on $bM$. 
Also, since $\dim M\times M=4$, transversality of $\delta X$ to $0\in\R^4$ on $M\times M\setminus U$
implies that $(\delta X)^{-1}(0)\subset M\times M$ consists of the diagonal 
$\Delta$ together with at most finitely many points in $M\times M\setminus \Delta$. 
\end{proof}

The following is an immediate consequence of Corollary \ref{cor:completeLeg} (iii)
and Theorem \ref{th:Bryant}.

%
%
\begin{corollary}
\label{cor:completesuperminimal}
Every open Riemann surface admits a complete superminimal immersion into $\S^4$ with dense image.
\end{corollary}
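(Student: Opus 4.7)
The plan is to use the Bryant correspondence together with Corollary \ref{cor:completeLeg}(iii) as a black box: invoke the latter to produce a complete injective holomorphic Legendrian immersion into $\CP^3$ with dense image, then push it down via the twistor projection $\pi:\CP^3\to \S^4$ and verify that the two key properties---completeness and density of image---survive.

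First I would apply Corollary \ref{cor:completeLeg}(iii) to the given open Riemann surface $M$ to obtain a complete injective holomorphic Legendrian immersion $F:M\to\CP^3$ whose image $F(M)$ is dense in $\CP^3$. Set $Y=\pi\circ F:M\to\S^4$. By Theorem \ref{th:Bryant}, $Y$ is automatically a conformal superminimal immersion of positive spin, so there is nothing to check regarding the superminimality or the immersion property themselves.

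Next I would transfer completeness. Because $F$ is Legendrian, $dF$ takes values in the contact subbundle $\xi\subset T\CP^3$, and the differential $d\pi$ restricts to a fibrewise isometry from $\xi$ (endowed with the Fubini-Study metric) onto $T\S^4$ (endowed with the spherical metric), as recalled in the introduction and in Section \ref{sec:S4}. Consequently the pullback metrics satisfy $Y^*g_{\S^4}=F^*(g_{\mathrm{FS}}|_\xi)=F^*g_{\mathrm{FS}}$, so $Y$ is complete as soon as $F$ is.

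Finally, density of the image is immediate: $Y(M)=\pi(F(M))$, and $\pi:\CP^3\to\S^4$ is a continuous surjection, so the image of a dense subset of $\CP^3$ is dense in $\S^4$. There is no genuine obstacle here---the entire content has been absorbed into Corollary \ref{cor:completeLeg}(iii) and Theorem \ref{th:Bryant}; the only thing to check is the brief isometry argument for completeness, which follows directly from the stated property of the twistor projection.
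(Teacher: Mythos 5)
Your proof is correct and follows exactly the same route as the paper, which simply states that the corollary is an immediate consequence of Corollary \ref{cor:completeLeg}(iii) and Theorem \ref{th:Bryant}. You have correctly filled in the (short) details — superminimality via Theorem \ref{th:Bryant}, completeness via the fact that $d\pi$ restricted to $\xi$ is an isometry onto $T\S^4$ so that $Y^*g_{\S^4}=F^*g_{\mathrm{FS}}$, and density via continuity and surjectivity of $\pi$.
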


Finally, Theorem \ref{th:connected} on path connectedness of the space of Legendrian immersions
from any open Riemann surface into $\CP^3$ immediately implies the following.

%
%
\begin{corollary}\label{cor:connectedS4}
For every connected open Riemann surface, $M$, the spaces $S_\pm(M,\S^4)$ 
of superminimal immersions $M\to\S^4$ of positive resp.\ negative spin are path connected.
\end{corollary}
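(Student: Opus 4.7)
The plan is to reduce the statement directly to Theorem \ref{th:connected} via the Bryant correspondence, together with the symmetry provided by the antipodal map on $\S^4$.

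First I would handle the positive spin case $S_+(M,\S^4)$. The key input is the remark immediately following Theorem \ref{th:Bryant}: postcomposition with the twistor projection $\pi:\CP^3\to\S^4$ defines a homeomorphism
\[
	\pi_*:\Lscr(M,\CP^3)\lra S_+(M,\S^4)
\]
with respect to the compact-open topologies. Continuity of $\pi_*$ is immediate from continuity of $\pi$; continuity of the inverse uses uniqueness of the holomorphic Legendrian lift of a conformal superminimal immersion of positive spin asserted in Theorem \ref{th:Bryant}, together with the local character of the lift in a twistor chart. Granting this homeomorphism, Theorem \ref{th:connected} tells us that $\Lscr(M,\CP^3)$ is path connected, so its image $S_+(M,\S^4)$ under $\pi_*$ is path connected as well. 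Concretely, given $X_0,X_1\in S_+(M,\S^4)$ with holomorphic Legendrian lifts $F_0,F_1:M\to\CP^3$, I would choose a continuous path $F_t\in\Lscr(M,\CP^3)$ joining them (Theorem \ref{th:connected}) and take $X_t:=\pi\circ F_t$.

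For the negative spin case $S_-(M,\S^4)$, I would invoke the symmetry noted in the paragraph introducing spin: the antipodal map $\sigma:\S^4\to\S^4$, $x\mapsto -x$, is an orientation-reversing isometry, and postcomposition with $\sigma$ sends conformal superminimal immersions of positive spin to those of negative spin and vice versa. This defines a homeomorphism $\sigma_*:S_+(M,\S^4)\to S_-(M,\S^4)$ in the compact-open topology, whose inverse is again given by postcomposition with $\sigma$. Path connectedness of $S_+(M,\S^4)$ therefore transfers to $S_-(M,\S^4)$.

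There is no serious obstacle here; all the work has been done in Theorem \ref{th:connected} and in the formulation of the Bryant correspondence. The only point that deserves a brief verification in the written proof is the continuity of the inverse of $\pi_*$, which amounts to observing that on a small open set $U\subset\CP^3$ admitting a holomorphic section $s:\pi(U)\to U$ of $\pi$ (a local twistor chart) the unique Legendrian lift of a conformal superminimal immersion of positive spin whose image lies in $\pi(U)$ is obtained by a canonical construction that depends continuously on the immersion in the $\Cscr^1$ topology; covering the image of a given $X\in S_+(M,\S^4)$ by finitely many such charts on compact subsets of $M$ yields the required continuity of $\pi_*^{-1}$ on compact sets.
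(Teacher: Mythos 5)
Your proposal is correct and follows essentially the same route as the paper: transport path connectedness of $\Lscr(M,\CP^3)$ (Theorem \ref{th:connected}) along the homeomorphism $\pi_*:\Lscr(M,\CP^3)\to S_+(M,\S^4)$ furnished by the Bryant correspondence, then handle $S_-(M,\S^4)$ via postcomposition with the antipodal map. Your brief discussion of the continuity of $\pi_*^{-1}$ is a reasonable elaboration of a point the paper leaves implicit, but the argument itself matches the paper's.
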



\subsection*{Acknowledgements}
A.\ Alarc\'on is supported by the State Research Agency (SRA) and European Regional Development Fund (ERDF) via the grant no.\ MTM2017-89677-P, MICINN, Proyecto PID2020-117868GB-I00 financiado por MCIN/AEI/10.13039/501100011033/ 
the Jun\-ta de Andaluc\'ia grant no. P18-FR-4049, and the Junta de Andaluc\'ia - FEDER grant no. A-FQM-139-UGR18, Spain. 
F.\ Forstneri\v c is supported  by the research program P1-0291 and the research grant 
J1-9104 from ARRS, Republic of Slovenia. F.\ L\'arusson is supported by Australian Research Council grant DP150103442.  
A part of the work on this paper was done while the second and the third named authors were visiting the 
University of Granada in September 2019. They wish to thank
the university for the invitation and partial support.



\vspace*{0.5cm}
\noindent Antonio Alarc\'{o}n\\
\noindent Departamento de Geometr\'{\i}a y Topolog\'{\i}a e Instituto de Matem\'aticas (IEMath-GR), Universidad de Granada, Campus de Fuentenueva s/n, E--18071 Granada, Spain\\
\noindent  e-mail: {\tt alarcon@ugr.es}

\vspace*{0.4cm}
\noindent Franc Forstneri\v c \\
\noindent Faculty of Mathematics and Physics, University of Ljubljana, Jadranska 19, SI--1000 Ljubljana, Slovenia\\
\noindent 
Institute of Mathematics, Physics and Mechanics, Jadranska 19, SI--1000 Ljubljana, Slovenia\\
\noindent e-mail: {\tt franc.forstneric@fmf.uni-lj.si}

\vspace*{0.4cm}
\noindent Finnur L\'arusson \\
\noindent School of Mathematical Sciences, University of Adelaide, Adelaide SA 5005, Australia \\
\noindent e-mail:  {\tt finnur.larusson@adelaide.edu.au}

\end{document}